\documentclass{amsart}
\usepackage[english]{babel} 
\selectlanguage{english}             
\usepackage[utf8]{inputenc}
\usepackage[T1]{fontenc}

\usepackage{amsmath,amsfonts,amssymb,amsthm}
\usepackage{bbm,mathrsfs,MnSymbol}
\usepackage{xfrac}
\usepackage[longnamesfirst,numbers]{natbib}
\usepackage[margin]{fixme}
\usepackage{setspace}
\usepackage{enumitem}         
\usepackage{lscape}
\usepackage{stmaryrd} 
\usepackage{todonotes}
\usepackage{color}
\definecolor{darkgreen}{rgb}{0,0.55,0}
\usepackage{hyperref}
\hypersetup{pdfstartview={Fit},    
  pdftitle={Interface approximation},    
  pdfauthor={Marvin S. Mueller},     
  pdfsubject={},   
  pagebackref=true,
  colorlinks=true,
  linkcolor=blue,
  citecolor=darkgreen}


\vfuzz2pt 
\hfuzz2pt 

\newtheorem{thm}{Theorem}[section]
\newtheorem{prop}[thm]{Proposition}
\newtheorem{cor}[thm]{Corollary}
\newtheorem{lem}[thm]{Lemma}

\theoremstyle{definition}
\newtheorem{defn}[thm]{Definition}
\newtheorem{ass}[thm]{Assumption}
\theoremstyle{remark}

\newtheorem{rmk}[thm]{Remark}
\newtheorem{example}[thm]{Example}

\newtheorem*{notation}{Notation}
\numberwithin{equation}{section}

\renewcommand\*{\cdot}
\newcommand\1{\mathbf{1}}

\newcommand\R{\mathbb{R}}
\newcommand\C{\mathbb{C}}
\newcommand\N{\mathbb{N}}

\newcommand\Q{\mathbb{Q}}


\providecommand{\abs}[1]{\left\lvert#1\right\rvert}
\providecommand{\norm}[2]{\left\lVert#1\right\rVert_{#2}}
\providecommand{\snorm}[2]{\left[#1\right]_{#2}}

\newcommand{\ip}[3]{\left\langle #1, #2\right\rangle_{#3}}


\newcommand{\RR}{\mathbb{R}}

\newcommand{\PP}{\mathbb{P}}

\newcommand{\FF}{\mathbb{F}}


\newcommand{\sL}{\mathscr{L}}
\newcommand{\sS}{\mathscr{S}}


\newcommand\ddt{\tfrac{\partial}{\partial t}}

\newcommand\ddx{\tfrac{\partial}{\partial x}}
\newcommand\ddxx{\tfrac{\partial^2}{\partial x^2}}
\newcommand\ddy{\tfrac{\partial}{\partial y}}

\newcommand\ddz{\tfrac{\partial}{\partial z}}


\newcommand\F{\mathcal{F}}

\renewcommand\d{\,\operatorname{d}\hspace{-0.05cm}}

\renewcommand{\P}[1]{\mathbb{P}\left[#1\right]}                     
\newcommand{\E}[1]{\mathbb{E}\left[#1\right]}                     


\newcommand\Id{\operatorname{Id}}                               
\newcommand\dom{\mathcal{D}}                                      

\newcommand\HS{\operatorname{HS}}

\renewcommand\H{\mathfrak H}
\renewcommand\L{\mathfrak L}

\newcommand\Lip{C_{\operatorname{Lip}}}
\newcommand\LipBd{C_{b,\operatorname{Lip}}}
\newcommand\LipLoc{C_{\operatorname{Lip},loc}}

\newcommand{\bmu}{{\overline \mu}}
\newcommand{\bsigma}{{\overline \sigma}}
\newcommand{\VI}{\mathcal{I}}
\newcommand{\llbrak}{\llbracket}
\newcommand{\rrbrak}{\rrbracket}

\newcommand{\bnabla}{\overline{\nabla}}
\title[Approximation of the interface condition for SMBPs]{Approximation of the interface condition for stochastic Stefan-type problems}
\author{Marvin S. M\"uller}
\address{Department of Mathematics, ETH Z\"urich, Switzerland}
\email{marvin.mueller@math.ethz.ch}

\thanks{The author acknowledges support by the Swiss National Science Foundation through grant SNF $205121\_163425$.}
\subjclass[2010]{60H15, 35R60, 58D25}

\keywords{Stochastic partial differential equation, Stefan problem,
  moving boundary problem, dependence on coefficients, approximation,
  volume imbalance}

\date{\today}
\begin{document}
\begin{abstract}
We consider approximations of the Stefan-type condition by imbalances
of volume closely around the inner interface and study convergence of the solutions of the corresponding semilinear stochastic moving boundary problems.
After a coordinate transformation, the problems can be reformulated as stochastic evolution equations on fractional power domains of linear operators. Here, the coefficients might fail to have linear growths and might be Lipschitz continuous only on bounded sets. We show continuity properties of the mild solution map in the coefficients and initial data, also incorporating the possibility of explosion of the solutions.
\end{abstract}

\maketitle
\allowdisplaybreaks
\section*{Introduction}

We study convergence of the local solutions of semilinear
stochastic moving boundary problems under perturbation of the
interface condition and continuity in coefficients of the mild
solution map for the corresponding systems of stochastic evolution
equations on fractional power domains of sectorial operators. 

In 1888, Josef Stefan~\cite{stefan1888theorie} proposed a model for
the temperature evolution in a system of water and ice. A key
ingredient is to model the time evolution of the spatial position of
the interface between water and ice proportionally to the local
imbalance of heat flux. In one space dimension, the equations for the
evolution of the temperature $v(t,x)$ at time
$t$ and space position $x$, and the position of the interface between water and
ice $p^*$ read as 
\begin{gather}\label{eq:Stefan_problem}
  \begin{split}
    \ddt v(t,x) &= \eta_+ \ddxx v(t,x),\qquad x>p^*(t),\\
    \ddt v(t,x) &= \eta_- \ddxx v(t,x),\qquad x<p^*(t),\\
    \ddt p^*(t) &= \varrho \cdot (\ddx v(t,p^*(t)-) - \ddx v(t,p^*(t)+)),\\
    v(t,p^*(t)) &= 0,
  \end{split}
\end{gather}
where $\eta_+$ and $\eta_-$ are diffusion coefficients inside the
respective phases and $\varrho>0$ is a proportionality constant.

Recently, semilinear and stochastic extensions
of the Stefan problem~\eqref{eq:Stefan_problem} have been studied in the context of
demand and supply modeling in modern financial markets
\cite{hambly2018reflected, keller2016stefan, mueller2016stochastic, zheng2012stochastic}, where trading works fully electronic via so called limit order books. In this
framework, $x\in \R$ describes a price level (e.\,g. in logarithmic
scale or for short time also linear scale) and $v(t,x)$ denotes the number of active buy or sell orders at time $t$ and the price level $x$. Here, we use the convention that buy orders have a negative, and
sell orders a positive sign. Demand and supply are cleared instantaneously when the price
levels of orders are matching, and so there is a price level $p^*$
separating buy and sell side of $v$. It was shown
in~\cite{keller2018forward} that under reasonable assumptions on the
coefficients one gets in fact that 
\[ v(t,x) \leq 0,\;\text{ for } x<p^*(t),\quad v(t,x) \geq
  0,\;\text{ for } x>p^*(t).\]
In a macroscopic model, $v$
now describes the density of limit orders in the order book. Then,
$p^*$ will be the so called mid-price.  

It was shown
empirically, that price changes are proportional to local
imbalances of orders placed close to the
mid-price~\cite{cont2014price, lipton2014trading}. 
For instance, a commonly used predictor for the next price move is the
volume imbalance $\VI$, which denotes the difference of limit orders
at the best buy and the best sell level. In the model, this reads as
\begin{equation*}
  \ddt p^*(t) = \varrho(\VI_t), \;t>0,
\end{equation*}
for some monotone transformation function $\varrho\colon \R\to \R$. 
Since $v$ describes the density of orders, the volume imbalance\footnote{In the empirical literature on often normalizes the volume imbalance to a value between $-1$ and $1$. As a simplification, we work with the absolute imbalance, here.} becomes
\begin{multline*}
  \VI_t:= \int_{p^*(t)}^{p^*(t)-\delta} v(t,x)\d x - \int_{p^*(t)}^{p^*(t)+\delta} v(t,x)\d x \\
  =\delta\left(-v(t,p^*(t)-) - v(t,p^*(t)+)\right) \\+  \delta^2/2\left(\ddx v(t,p^*(t)-) - \ddx v(t,p^*(t)+)\right) + o(\delta^2),
\end{multline*}
where $\delta>0$ is the minimal distance between two price levels (also called tick size). From macroscopic modeling perspective, we switch to a continuous in price-scale and are then interested to understand what happens when $\delta \searrow 0$. 

Assuming that Dirichlet boundary conditions are satisfied at $p^*$, we thus get with proper rescaling 
\begin{equation*}
  \lim_{\delta\searrow 0} \frac1{\delta^2} \VI_t =\frac12( \ddx v(t,p^*(t)-) - \ddx v(t,p^*(t)+)).
\end{equation*}
On that way, we recapture the Stefan condition for the price dynamics which has been widely used in the literature. For a
more detailed description of electronic markets using models with
Stefan-type condition for the price dynamics we refer to
\cite{mueller2016stochastic}, \cite{zheng2012stochastic} and
\cite{hambly2018reflected}.

In the following we will make the motivation to use the Stefan-type
condition mathematically rigorous by studying the convergence of the
respective densities of orders. More precisely, we analyze the
convergence of the solutions of stochastic moving boundary problems
driven by spatially colored noise and with inner boundary dynamics governed by
\begin{equation*}
  \d p^*_\delta (t) = \varrho(\delta^{-2} \VI_t)\d t, \quad \text{as } \delta \searrow 0,
\end{equation*} 
to the solutions of the respective stochastic Stefan-type problems. In
recent work~\cite{hambly2018reflected, hambly2018stefan}, Hambly and
Kalsi showed existence, uniqueness and regularity for stochastic moving boundary
problems which are driven by space-time white noise and cover
interface dynamics of Stefan and also volume imbalance type. The latter was
motivated by approximations of the Stefan condition, but convergence of the solutions has not been discussed so far.

In the next section we present our precise setup and state the convergence results. To prove the convergence we use the reformulation of stochastic moving boundary problems as stochastic evolution equations, which was
introduced in \cite{keller2016stefan} to show existence and uniqueness of solutions. In Section~\ref{sec:see}, we extend this abstract setting by studying continuous dependence of the mild solution map on
the coefficients and initial data. To overcome the issue that the solutions might explode in finite time we truncate the coefficients as in~\cite{keller2016stefan, keller2018forward} and need to deal with convergence results on the respective explosion times, following~\cite{keller2018forward}. In Section~\ref{sec:proofs}, we then apply the abstract setting to the stochastic moving boundary problems and finish the proof of the statements from Section~\ref{sec:main}. In the same way, the results on stochastic evolution equations can also be used for more general approximations of the coefficients. Relevant notation is listed in Appendix~\ref{sec:notation}.


\section{Stochastic Stefan-type problems and approximations}
\label{sec:main}
Let $\mu_+$, $\mu_-\colon \R^3\rightarrow \R$, $\sigma_+$, $\sigma_-\colon
\R^2\rightarrow \R$, $\varrho\colon \R^2\to \R$, $\eta_+$, $\eta_->0$ and $\kappa_+$, $\kappa_-\in
[0,\infty)$, let $(\Omega, \F, (\F_t)_{t\in[0,\infty)}, \PP)$ be a
stochastic basis on which exists an $\Id_{L^2(\R)}$-cylindrical Wiener
process $W$, let $\zeta \colon \R^2\to \R$ be an integral kernel such that
\begin{equation*}
  \xi_t(x) :=  T_\zeta W_t (x),\qquad T_\zeta w(x) := \int_\R \zeta(x,y) w(y) \d y,\quad x\in \R,\;t\geq 0,\;w\in L^2(\R),
\end{equation*}
defines a Brownian motion $(\xi_t(x))_{t\geq 0}$ for each $x\in \R$. 

Under additional assumptions, which will be stated below, we consider
the stochastic moving boundary problem in one space dimension, for
$t>0$, $x\in \R$, $n\in \bar \N$,
\begin{gather}
  \begin{split}
    \d v_{n}(t,x) &= \left[\eta_+ \Delta v_{n} (t,x)+
      \mu_+\left(x-p_{n}^*(t), v_{n}(t,x), \nabla v_{n}(t,x) \right) \right] \d t\\
    & \qquad \qquad\qquad\qquad+ \sigma_+\left(x-p_{n}^*(t), v_{n}(t,x)\right)\d \xi_t (x), \quad  x > p_{n}^*(t),\\
    \d v_{n}(t,x) &= \left[\eta_- \Delta v_{n}(t,x) +\mu_-\left(x-p_{n}^*(t),v_{n}(t,x), \nabla v_{n}(t,x) \right) \right] \d t\\
    &\qquad\qquad\qquad\qquad + \sigma_-\left(x-p_{n}^*(t), v_{n}(t,x)\right) \d \xi_t(x), \quad  x < p_{n}^*(t),\\
  \end{split}\label{eq:smbp}
\end{gather}
with Dirichlet boundary conditions
\begin{gather}
  v_{n}(t,p_{n}^*(t)+) =
  v_{n}(t,p_{n}^*(t)-) = 0,
  \label{eq:bc}
\end{gather}
and interface dynamics, if $n<\infty$,
\begin{equation}
  \label{eq:Stefan_cond_approx}
  \d p_{n}^*(t) = \varrho\Big(2n^2 \int_0^{1/n} v_{n}(t,p_n^*(t) + y) \d
  y, 2n^2\int_0^{1/n} v_{n}(t,p_n^*(t) - y)\d y\Big)\d t,
\end{equation}
and, if $n =\infty$,
\begin{equation}
  \label{eq:Stefan_cond}
  \d p_{\infty}^*(t) = \varrho\Big(\nabla v_{\infty}(t,p_{\infty}^*(t)+), \nabla v_{\infty}(t,p_{\infty}^*(t)-)\Big)\d t.
\end{equation}
Note that due to the Dirichlet boundary conditions the scaling $n^2$ in \eqref{eq:Stefan_cond_approx} ensures a non-trivial limit of the terms when $n\to\infty$.

We recall the notion of solution as introduced
in~\cite{keller2016stefan} for the problem with $n = \infty$. First, to formalize the moving frame for the free boundary problem, we define for each $x \in \RR$ the function space
\begin{equation*}
\Gamma(x) := \left\{v\in L^2(\R)\colon \left.v\right|_{(-\infty,x)} \in
  H^2(-\infty,x), \left.v\right|_{(x,\infty)} \in H^2(x,\infty),
  \,v(x\pm) = 0 \right\}.
\end{equation*}
Due to Sobolev embeddings, each $f\in \Gamma(x)$ admits a
uniformly continuous and bounded representative which is piece-wise
continuously differentiable. In particular, $\Gamma(x)\subset
H^1(\R)$, for each $x\in \R$. For the definition of a solution, we stress the definition of $\nabla$ and $\Delta$ as \emph{piece-wise} weak derivatives, see Appendix~\ref{sec:notation}.

\begin{defn}\label{df:smbp}
  Let $d\in \N$, $D\colon \R\times L^2(\R) \to \R$, $\bmu: \R^4\rightarrow \R$, and $\bsigma: \R^2\rightarrow \R$ be Borel measurable. A \emph{local (strong) solution} of the stochastic moving boundary problem
  \begin{equation*}
    \begin{aligned}
      \d v(t,x) &= \bmu(x-p^*(t), v(t,x), \nabla v(t,x), \Delta v(t,x))
      \d t\\
      &\qquad\qquad\qquad \qquad+ \bsigma(x- p^*(t), v(t,x)) \d \xi_t(x),\quad t\geq
      0,\,x\neq p^*(t),\\
      \d p^*(t) &= D(p^*(t), v(t,.) \d t,\\
      v(t,p^*(t)) &=0,
    \end{aligned}
  \end{equation*}
  with initial data $v_0\in L^2(\R)$ and $p_0\in \R$, is a triple $(\tau, p^*, v)$, where $\tau$ is a predictable stopping time, and 
  \begin{equation*}
    (p^*, v): \llbrak 0, \tau \llbrak \to \bigcup_{x \in \R} \left(\{x\}  \times \Gamma(x)\right) \; \subset \; \R \times L^2(\R),
  \end{equation*}
  such that $(p^*, v, \nabla v, \Delta v)$ is an adapted and continuous process on $\R\times L^2(\R)^{\times 3}$, and it holds on $\llbrak 0,\tau\llbrak$
  \begin{align*}
    v(t,\cdot) - v_0 &= \int_0^t \bmu(\cdot-p^*(s),  v(s,\cdot), \nabla v(s,\cdot), \Delta v(s,\cdot))\d s\\
                            &\qquad\qquad+ \int_0^t \bsigma(\cdot-p^*(s), v(s,\cdot))\d\xi_s(\cdot), \\
   p_*(t)  &= p_0 +  \int_0^t D(p^*(s), v(s,.))\d s,\quad t\geq 0.
  \end{align*}
  The first equality holds true in $L^2(\R)$, where the first integral
  is a Bochner integral in $L^2(\R)$, and the second one a stochastic
  integral in $L^2(\R)$.\\
  The solution is called \emph{global}, if $\tau = \infty$ a.\,s. and the solution is called maximal if there is no solution on a larger stochastic interval.
\end{defn}

\begin{notation}\label{not:bmusigma}
  For the remainder of this paper we will use the functions $\bmu:
  \R^4\rightarrow \R$, $\bsigma: \R^2\rightarrow \R$, such that for $x,v,v',v''\in \R$,
  \begin{align*}
    \bmu(x,v,v',v'')&:=
                        \begin{cases}
                          \eta_+ v'' + \mu_+(x,v,v'), & x>0,\\
                          \eta_- v'' + \mu_-(x,v,v'), & x<0,
                        \end{cases}
    \\
    \bsigma(x,v) &:= 
                     \begin{cases}
                       \sigma_+(x,v), & x>0,\\
                       \sigma_-(x,v), & x<0.
                     \end{cases}
  \end{align*}
\end{notation}
\begin{rmk}
  At this point it becomes already visible that, even when $\mu_\pm
  \equiv 0$ and we assume that $\varrho$ and $\sigma_\pm$ are linear
  functions, the stochastic moving boundary problem~\eqref{eq:smbp} is
  non-linear. This is due to the interaction mechanism of $v$ and
  $p^*$ and will become more clear below.
\end{rmk}

We now state the main assumptions, which are the same as required for existence of the stochastic Stefan problems in~\cite{keller2016stefan}.
\begin{ass}\label{ass:mu} $\mu_+$ and $\mu_-$ are continuously differentiable and
  \begin{enumerate}[label=(\roman*)]
  \item\label{ai:mugrowths} there exist $a \in L^2(\R)$, $b$, $\tilde b \in L^\infty_{loc}(\R^2)$ such that for all $x, y, z\in \R$
    \[ \abs{\mu_\pm (x,y,z)} + \abs{\ddx \mu_\pm (x,y,z)} \leq  a(|x|) + b(y,z)\left(\abs{y} + \abs{z}\right),\]
    and
    \[ \abs{\ddy \mu_\pm(x,y,z)} +  \abs{\ddz\mu_\pm (x,y,z)} \leq \tilde b(y,z), \]
  \item\label{ai:mulip} $\mu_\pm$ and their partial derivatives (in $x$, $y$ and $z$) are locally Lipschitz with Lipschitz constants independent of $x\in \R$.
  \end{enumerate}
\end{ass}

\begin{ass}\label{ass:sigma}
  $\sigma_+$ and $\sigma_-$ are twice continuously differentiable and
  \begin{enumerate}[label=(\roman*)]
  \item\label{ai:sigmagrowths} for every multi-index $I = (i,j) \in \N^2$ with $\abs{I} \leq 2$ there exist $a_I\in L^2(\R_+)$ and $b_I \in L^\infty_{loc}(\R)$ such that
    \[  \abs{\tfrac{\partial^{\abs{I}}}{\partial x^{i}\partial y^{j}} \sigma_\pm(x,y)} \leq  \begin{cases}   a_I(|x|) + b_I(y)\abs{y}, & j = 0, \\ b_I (y), & j \neq 0, \end{cases}\]
  \item\label{ai:sigmalip} $\sigma_\pm$ and their partial derivatives (in $x$, $y$ and $z$) are locally Lipschitz with Lipschitz constants independent of $x\in \R$,  
  \item\label{ai:sigmabc} $\sigma_+$ and $\sigma_-$ satisfy the boundary condition
    \begin{equation*}
      \sigma_+(0,0)=\sigma_-(0,0) = 0. 
    \end{equation*}
  \end{enumerate}
\end{ass}

\begin{ass}\label{ass:rho}
  $\varrho:\R^2\to\R$ is locally Lipschitz continuous. 
\end{ass}
\begin{ass}
  \label{ass:zeta}
  For all $y\in \R$ it holds that $\zeta(.,y) \in C^3(\R)$ and for all
  $x\in \R$ and $i\in \{0,1,2,3\}$ that $\tfrac{\partial^{i}}{\partial x^i}\zeta(x,.)\in L^2(\R)$. Moreover, 
  \begin{equation*}
    \sup_{x\in \R} \norm{\tfrac{\partial^{i}}{\partial x^i}
      \zeta(x,.)}{L^2(\R)} <\infty,\quad i=0,1,2,3.
  \end{equation*}
\end{ass}

\begin{example}[Convolution Kernel]
  Let $\zeta(x,y) := \zeta(x-y)$, $x$, $y\in \R$. If $\zeta\in C^{\infty}(\R)\cap H^3(\R)$, 
  then Assumption~\ref{ass:zeta} is satisfied. In this case, the operator $T_\zeta$ corresponds to spatial convolution with $\zeta$.
\end{example}

To achieve global existence, we will also need the following
assumption. 
\begin{ass}
  \label{ass:global}
  Assume that the functions $(b,\tilde b)$ and $\varrho$ in Assumption~\ref{ass:mu} and
  \ref{ass:rho}, respectively, are globally bounded. Moreover, assume that there exist functions $\sigma_\pm^1\in H^2(\R_+)\cap C^2([0,\infty))$ and $\sigma_\pm^2 \in BUC^2([0,\infty))$ such that
  \[ \sigma_+(x,y) = \sigma_+^1(x) + \sigma_+^2(x)y,\qquad \sigma_-(x,y) = \sigma_-^1(x) + \sigma_-^2(x)y,\]
  for all $x$, $y\in \R$. 
\end{ass}

We are now able to state the existence result. 
\begin{thm}[Existence] 
  \label{thm:existence}
  Let Assumptions~\ref{ass:mu},~\ref{ass:sigma},~\ref{ass:rho}
and~\ref{ass:zeta} hold true, and let $p_0\in \R$ and $v_0\in \Gamma(p_0)$. Then, for each $n\in \bar \N$ there
exists an up to modifications unique strong solution $(\tau_n, p_n^*,v_n)$ of the stochastic moving boundary problem~\eqref{eq:smbp}
  with interface condition~\eqref{eq:Stefan_cond_approx},
  if $n<\infty$,
  and \eqref{eq:Stefan_cond}, if $n=\infty$.
  
  If, in addition, Assumption~\ref{ass:global} holds true, then
  $\tau_n=\infty$ a.\,s. for
  each $n\in \bar \N$.
\end{thm}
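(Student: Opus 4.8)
The plan is to follow the reformulation of the moving boundary problem as a stochastic evolution equation on a fractional power domain introduced in \cite{keller2016stefan}, and to reduce the claim to the abstract existence and uniqueness theory there, supplemented by the dependence results of Section~\ref{sec:see}. First I would freeze the interface by passing to the moving frame $u(t,y) := v_n(t, y + p_n^*(t))$, so that the free boundary is pinned at $y=0$. Because the coefficients in \eqref{eq:smbp} are already posed with arguments $x - p_n^*(t)$, the explicit dependence on $p_n^*$ disappears in the new coordinate: the drift and diffusion become $\bmu(y, u, \nabla u, \Delta u)$ and $\bsigma(y, u)$ of Notation~\ref{not:bmusigma}, with the split now at the fixed point $y = 0$, and the spatial part reduces to the fixed sectorial operator $\cA$ acting as $\eta_+\Delta$ on $(0,\infty)$ and $\eta_-\Delta$ on $(-\infty,0)$ with Dirichlet conditions at $0$, which generates an analytic semigroup on $L^2(\R)$. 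The coupling to the boundary re-enters only through the transport term $D_n(u)\,\nabla u$ produced by the chain rule and through the shift $\xi_t(\cdot + p_n^*(t))$ of the driving noise; the coupled system thus reads $\d u_t = [\cA u_t + F(u_t) + D_n(u_t)\nabla u_t]\d t + B(u_t)\,\d\xi_t$, $\d p_n^*(t) = D_n(u_t)\d t$.

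To run the abstract theory I would work on the fractional power domain $\dom((-\cA)^\alpha)$ with $\alpha \in (3/4, 1)$, chosen so that this space embeds into the piecewise-$C^1$ functions vanishing at $0$; the transformed initial datum $u_0 = v_0(\cdot + p_0)$ lies in $\Gamma(0) \subset \dom((-\cA)^\alpha)$. The decisive verification is that, under Assumptions~\ref{ass:mu},~\ref{ass:sigma} and~\ref{ass:rho}, the maps $F$, $B$ and $D_n$ are measurable and locally Lipschitz on $\dom((-\cA)^\alpha)$: the drift combines the Nemytskii map of $\bmu$ with the bilinear transport term $D_n(u)\,\nabla u$, while $B$ is the Nemytskii map of $\bsigma$, whose mapping properties into the correct space follow from Assumption~\ref{ass:sigma} together with the boundary condition~\ref{ai:sigmabc}, and the kernel regularity in Assumption~\ref{ass:zeta} controls the stochastic convolution driven by the shifted noise. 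For $n = \infty$ the boundary functional $D_\infty(u) = \varrho(\nabla u(0+), \nabla u(0-))$ is exactly the one treated in \cite{keller2016stefan}, so that case is inherited directly. For $n < \infty$ the functional $D_n(u) = \varrho\big(2n^2\int_0^{1/n} u(y)\d y,\, 2n^2\int_0^{1/n} u(-y)\d y\big)$ is a composition of $\varrho$ with bounded averaging functionals that are continuous even on $L^2(\R)$ by Cauchy--Schwarz; hence $D_n$ is locally Lipschitz with no loss relative to the $n=\infty$ case, and the same abstract framework applies verbatim.

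With these mapping properties in hand, local existence and uniqueness follow the standard truncation-and-fixed-point scheme. I would truncate $F$, $B$ and $D_n$ outside balls of radius $R$ to obtain globally Lipschitz coefficients, solve the truncated equation by a Banach fixed-point argument in a space of predictable $\dom((-\cA)^\alpha)$-valued processes (using analytic-semigroup smoothing for the deterministic convolution and the factorization method for the stochastic convolution), and then patch the truncated solutions along the exit times $\tau_R$ of balls of radius $R$. Setting $\tau_n := \sup_R \tau_R$ yields the maximal local solution, and transforming back by $v_n(t,\cdot) = u(t, \cdot - p_n^*(t))$ recovers a triple $(\tau_n, p_n^*, v_n)$ solving \eqref{eq:smbp} in the sense of Definition~\ref{df:smbp}; uniqueness up to modifications is inherited from uniqueness of the truncated problems.

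For the global statement, under Assumption~\ref{ass:global} the functions $b$, $\tilde b$ and $\varrho$ are bounded and $\sigma_\pm$ is affine in $y$ with $\sigma_\pm(0,0)=0$, which restores effective linear growth of the coefficients in the state. I would then derive an a priori moment bound $\sup_{t\le T}\E{\norm{u_t}{\dom((-\cA)^\alpha)}^2} < \infty$ on every finite horizon $T$ by a Gronwall estimate on the mild formulation, forcing $\P{\tau_n < \infty} = 0$, i.e.\ $\tau_n = \infty$ almost surely. The main obstacle throughout is the local Lipschitz verification of the second paragraph: showing that the transport term $D_n(u)\,\nabla u$ and the gradient-dependent part of $\bmu$ are genuinely locally Lipschitz between the correct fractional power domains forces the delicate choice of $\alpha$ strictly between $3/4$ (so that gradient traces are continuous) and $1$ (so that the stochastic convolution of the shifted noise retains enough spatial regularity). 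This only-locally-Lipschitz, non-linear coupling of $p_n^*$ and $u$ is precisely what obstructs a direct global argument and necessitates the truncation.
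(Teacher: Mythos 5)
Your overall strategy --- pass to the moving frame, recast the problem as a semilinear stochastic evolution equation for an analytic semigroup, verify local Lipschitz continuity of the nonlinearities on a fractional power domain, and conclude by truncation, fixed point and patching along exit times --- is the same as the paper's, which delegates exactly these steps to Proposition~\ref{prop:existence_loc}, the machinery of \cite{keller2016stefan}, and the chain-rule transfer back to the moving frame in Proposition~\ref{prop:trafo}. There is, however, a genuine gap in your choice of regularity level. You propose to solve the mild equation in $\dom((-\cA)^{\alpha})$ with $\alpha\in(3/4,1)$, i.e.\ in a space of piecewise $H^{2\alpha}$ functions with $2\alpha<2$, with the drift Nemytskii maps landing only in $L^2$. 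But Definition~\ref{df:smbp} demands a \emph{strong} solution: $(p^*,v,\nabla v,\Delta v)$ must be a continuous $\R\times L^2(\R)^{\times 3}$-valued process and the equation must hold as an $L^2$-valued integral identity containing $\Delta v$. A mild solution at level $2\alpha<2$ need not have a piecewise weak Laplacian in $L^2$, so it is not a solution in the required sense, and nothing in your argument upgrades it. The paper instead takes the solution space to be $\dom(A)=\H^2_D$ itself and shows (Lemma~\ref{lem:std}, resting on the derivative bounds in Assumptions~\ref{ass:mu} and~\ref{ass:sigma}) that the drift maps $\dom(A)$ locally Lipschitz into the \emph{positive-order} space $\H^{2\alpha'}$ with $\alpha'\in(0,1/4)$, and that $C$ maps $\dom(A)$ into $\HS(U;\dom(A))$; it is exactly this gain of fractional regularity in the targets that keeps the convolutions in $\dom(A)$ and yields the strong $\H^2_D$-valued solution via \cite[Corollary 3.21]{keller2016stefan}. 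Your weaker mapping hypotheses would not deliver this.

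A second, smaller but still substantive point: the boundary position must be a component of the state. In centered coordinates the noise is $\d\xi_t(x+p_n^*(t))$, so the diffusion coefficient depends on $p_n^*$ through the shifted kernel $(T_\zeta w)(p\pm\cdot)$; this is why the paper works on $\L^2=L^2(\R_+)\oplus L^2(\R_+)\oplus\R$ with $C(u_1,u_2,p)$ explicitly a function of $p$, and why Assumption~\ref{ass:zeta} (several derivatives of $\zeta$ in the first variable) is needed to make $p\mapsto C(\cdot,\cdot,p)$ Lipschitz into $\HS(U;\dom(A))$. Your displayed system $\d u=[\cA u+F(u)+D_n(u)\nabla u]\d t+B(u)\d\xi_t$ suppresses this dependence, so the fixed-point scheme as you state it does not close; you acknowledge the shift verbally but it must enter the coefficient of the abstract equation. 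By contrast, your observation that $D_n$ is continuous on $L^2$ for fixed finite $n$ is correct and suffices for the existence statement; the $n$-uniform bound of Lemma~\ref{lem:intnorm} is only needed later for the convergence theorem. The global-existence step is essentially right in spirit, provided the moment bound is established uniformly over the truncation parameter before invoking the blow-up alternative, as in Theorem~\ref{thm:approx_loc}.
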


The main result on stochastic moving boundary problems is now the convergence statement, which can be found in a more precise formulation in Section~\ref{sec:proofs}.
\begin{thm}[Convergence]
  \label{thm:conv_smbp}
  Let Assumptions~\ref{ass:mu},~\ref{ass:sigma},~\ref{ass:rho}
  and~\ref{ass:zeta} hold true and let $p_0\in \R$ and $v_0\in
  \Gamma(p_0)$. For $n \in\bar \N$, let $(\tau_n, p_n^*, v_n)$ be the
  unique strong solution of~\eqref{eq:smbp} from Theorem~\ref{thm:existence}. Then, for all $t\in [0,\infty)$, in probability
  \begin{align*}
    H^1(\R)-\lim_{n\to \infty}v_{n}(t,\cdot) \1_{\llbrak 0, 
    \tau_n\wedge \tau_\infty\llbrak }(t) &= v_\infty(t,\cdot) 
                                           \1_{\llbrak 0, \tau_\infty\llbrak}(t),\\
    L^2(\R)-\lim_{n\to \infty}\Delta v_{n}(t,\cdot) \1_{\llbrak 0, 
    \tau_n\wedge \tau_\infty\llbrak }(t) &= \Delta v_\infty(t,\cdot) 
                                           \1_{\llbrak 0, \tau_\infty\llbrak}(t),\\    
    \lim_{n\to\infty} p^*_{n}(t) \1_{\llbrak 0,  \tau_n\wedge \tau_\infty\llbrak }(t)
                                         &= p^*_\infty(t) \1_{\llbrak 0, \tau_\infty\llbrak }(t).
  \end{align*}
  If, in addition, Assumption~\ref{ass:global} holds true, then for
  each $q\in [1,\infty)$ and $T\in (0,\infty)$,
  \begin{align*}
    \lim_{n\to\infty} \E{\sup_{0\leq t\leq T} \norm{v_\infty(t,\cdot)-
    v_n(t,\cdot)}{H^1(\R)}^q }&=0,\\
    \lim_{n\to\infty} \E{\sup_{0\leq t\leq T}\norm{\Delta v_\infty(t,\cdot) - \Delta v_n(t,\cdot)}{L^2(\R)}^q  }&=0,\\
    \lim_{n\to\infty} \E{\sup_{0\leq t\leq T}  \abs{p_\infty^*(t) - p_n^*(t)}^q}&=0.
  \end{align*}
\end{thm}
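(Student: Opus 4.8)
The plan is to reduce the statement to the abstract stochastic evolution equation (SEE) framework of Section~\ref{sec:see} via the moving-frame reformulation of~\cite{keller2016stefan}, and then to exploit continuity of the mild solution map in the interface coefficient. First I would apply, for each $n\in\bar\N$, the coordinate transformation $u_n(t,\cdot) := v_n(t,\cdot + p_n^*(t))$, which maps the free interface to the origin and turns~\eqref{eq:smbp} together with~\eqref{eq:Stefan_cond_approx} (respectively~\eqref{eq:Stefan_cond}) into a system of SEEs for the pair $X_n = (p_n^*, u_n)$ on $\R\times L^2(\R)$, driven by the analytic semigroup $(S_t)_{t\ge 0}$ generated by $A$, the sectorial realization on $L^2(\R)$ of the two-phase Laplacian with Dirichlet condition at the origin. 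Because $p_n^*$ is of bounded variation, the transformation is governed by a first-order chain rule: it produces a transport term $D_n(u_n)\nabla u_n$ in the drift and shifts the noise kernel $\zeta$ by $p_n^*$, but generates no additional second-order It\^o correction. The components $v_n$, $\nabla v_n$, $\Delta v_n$ then correspond to an $X_n$ living on a fractional power domain $\dom((-A)^\alpha)$ that embeds into the piecewise continuously differentiable functions, so that boundary derivatives are well defined. The decisive structural point is that the problems for different $n\in\bar\N$ share the same initial datum $(p_0,v_0)$, the same coefficients $\mu_\pm$, $\sigma_\pm$ and kernel $\zeta$, and the same operator $A$, and differ \emph{only} through the interface functional: the approximate $D_n(u) := \varrho(2n^2\int_0^{1/n} u(y)\d y,\, 2n^2\int_0^{1/n} u(-y)\d y)$ versus the Stefan functional $D_\infty(u) := \varrho(\nabla u(0+), \nabla u(0-))$.

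The problem-specific heart is to show that $D_n\to D_\infty$ in the sense required by the abstract continuity theorem, namely uniformly on bounded subsets of $\dom((-A)^\alpha)$ and with uniformly bounded local Lipschitz constants. The key computation is a Taylor expansion at the origin: since $u(0\pm)=0$ by the Dirichlet boundary condition~\eqref{eq:bc} and $u$ is piecewise continuously differentiable near $0$, the rescaled one-sided averages $2n^2\int_0^{1/n} u(\pm y)\d y$ converge, as $n\to\infty$, to the respective one-sided derivatives of $u$ at the origin, which together with the local Lipschitz continuity of $\varrho$ from Assumption~\ref{ass:rho} identifies $\lim_n D_n$ with the Stefan functional $D_\infty$ of~\eqref{eq:Stefan_cond}. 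The delicate point is that the Taylor remainder must be controlled \emph{uniformly} over bounded sets of $\dom((-A)^\alpha)$, so that convergence and equi-Lipschitz bounds hold at the level of functionals on the fractional power domain rather than merely pointwise. This is precisely where the embedding of $\dom((-A)^\alpha)$ into piecewise continuously differentiable functions is essential, since the boundary derivatives read off by $D_\infty$ are discontinuous on $L^2(\R)$ and even on $H^1(\R)$; the averaging built into $D_n$ regularizes this, but the regularization must be quantified uniformly in the state to be usable in the Gronwall-type comparison of mild solutions.

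With the coefficient convergence established, I would invoke the continuity result for the mild solution map from Section~\ref{sec:see}. Because the coefficients satisfy only the local Lipschitz Assumptions~\ref{ass:mu} and~\ref{ass:sigma} and need not have linear growth, the solutions may explode in finite time, so the comparison must be localized. Following~\cite{keller2018forward}, I would truncate the coefficients at level $R$ to obtain globally defined solutions $X_n^R$, apply the abstract continuity theorem to obtain $X_n^R\to X_\infty^R$ uniformly in probability on compact time intervals, and observe that $X_n^R$ coincides with $X_n$ up to the first exit time $\tau_n^R$ from the ball of radius $R$. The main obstacle is then to control the explosion times, i.e. to show that $\tau_n^R$ and $\tau_n$ are compatible under the limit so that, on the event $\llbrak 0,\tau_n\wedge\tau_\infty\llbrak$, sending $R\to\infty$ recovers the untruncated convergence. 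This is exactly the convergence-of-explosion-times analysis imported from~\cite{keller2018forward}, and it yields the localized in-probability limits claimed for $v_n$ in $H^1(\R)$, for $\Delta v_n$ in $L^2(\R)$, and for $p_n^*$.

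Finally, under the additional Assumption~\ref{ass:global} the boundedness of the drift data together with the affine structure of $\sigma_\pm$ yields global existence ($\tau_n=\infty$ almost surely by Theorem~\ref{thm:existence}) and uniform moment bounds on the solutions. No localization is then needed, and the moment form of the abstract continuity theorem upgrades the convergence to $L^q$-convergence of $\sup_{0\le t\le T}$ of the respective norms, for every $q\in[1,\infty)$ and $T\in(0,\infty)$, which establishes the second group of assertions.
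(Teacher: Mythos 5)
Your proposal follows essentially the same route as the paper: the moving-frame reduction to a stochastic evolution equation on fractional power domains, the fundamental-theorem-of-calculus estimate showing that the rescaled one-sided averages converge to the one-sided derivatives with uniformly bounded local Lipschitz constants (the paper's Lemmas~\ref{lem:intnorm}, \ref{lem:equilip} and~\ref{lem:Psinconv}), truncation of the coefficients together with convergence of the exit times for the localized in-probability statement (Theorem~\ref{thm:approx_loc}), and uniform moment bounds under Assumption~\ref{ass:global} for the $L^q$ assertions. The only step you leave implicit is the passage back from the fixed-boundary solution $(u_n,p_n^*)$ to $v_n=u_n(\cdot-p_n^*)$, which the paper handles via the continuity properties of the shift map $F$ in Lemma~\ref{lem:trafo} and the stochastic chain rule in Proposition~\ref{prop:trafo}.
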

\begin{rmk}
  After truncation of the coefficients as will be done in an abstract
  framework below, one obtains the
  convergence rate $1/2$ for the solutions of the truncated
  equations. Hence, the convergence above is of that rate as
  long as the solutions do not get ``{}large''{}; see
  Remark~\ref{rmk:convrate} for a more precise formulation. However,
  further analysis is required to understand whether the convergence
  rate also holds for convergence of $(v_n,p_n)$ to
  $(v_\infty,p_\infty)$ without localization.
\end{rmk}

\subsection{Outline of the proof}
\label{ssec:proofmainres}
For the proof and further analysis we will, as in \cite{keller2016stefan}, reformulate~\eqref{eq:spde} in terms of stochastic evolution equations.

To this end, one considers the problems in centered coordinates, which
yields semilinear SPDEs on the fixed domain $\dot \R= \R\setminus\{0\}$. We get for
$n\in \bar \N$, $u_{n}(t,x):= v_n(t,x+p_*(t))$, $x\neq 0$, $t\geq 0$, the equation
\begin{gather}
  \begin{split}
    \d u_{n}(t,x) &= \left[\eta_+\Delta u_{n}(t,x) +  \mu_+\left(x,
        u_{n}(t,x), \nabla u_{n}(t,x) \right)\vphantom{ \varrho(\nabla
        u_{n}(t,0+),\nabla u_{n}(t,0-))\nabla u_{n}(t,x)}\right.  \\
    &\qquad\qquad\qquad \left.+\vphantom{(\eta_+) \Delta u_{n} +
        \mu_+\left(x,p_{n}^*(t), u_{n}, \nabla u_{n} \right)+}
      \varrho(\nabla u_{n}(t,0+),\nabla u_{n}(t,0-))\nabla u_{n}(t,x)\right] \d t\\
    & \qquad + \sigma_+\left(x,  u_{n}(t,x))\right)\d \xi_t (x+p_{n}^*(t)) , \quad  x > 0,\\
    \d u_{n}(t,x) &= \left[\eta_- \Delta u_{n} +\mu_-\left(x,  u_{n}(t,x), \nabla u_{n}(t,x) \right) \vphantom{ + \varrho(u_{n}(t,0+),u_{n}(t,0-),\nabla u_{n}(t,0+), \nabla u_{n}(t,0-))\nabla u_{n}(t,x)} \right.\\
    &\qquad\qquad\qquad + \left.\vphantom{(\eta_-) \Delta u_{n} +\mu_-\left(x, p_{n}^*(t),  u_{n}, \nabla u_{n} \right)+} \varrho(\nabla u_{n}(t,0+), \nabla u_{n}(t,0-))\nabla u_{n}(t,x)\right] \d t\\
    &\qquad + \sigma_-\left(x,  u_{n}(t,x)\right) \d \xi_t(x+p_{n}^*(t)), \quad  x < 0,
  \end{split}\label{eq:spde}
\end{gather}
with boundary conditions
\begin{equation*}
  u_{n}(t,0+) = u_{n} (t,0-) = 0,
\end{equation*}
and, as above, for $n <\infty$,
\begin{equation*}
  \d p_{n}^*(t) = \varrho\Big(2n^{2} \int_0^{1/n}u_{n}(t,y)\d
  y,2n^2\int_0^{1/n}  u_{n}(t,-y)\d y\Big)\d t ,
\end{equation*}
and, for $n=\infty$,
\begin{equation*}
  \d p_{\infty}^*(t) = \varrho\Big(\nabla u_{\infty}(t,0+), \nabla u_{\infty}(t,0-)\Big)\d t.
\end{equation*}

Reflecting $(-\infty,0)$ to $(0,\infty)$, we will later reformulate the
problem in terms of stochastic evolution equations on the spaces
\[ \L^2:= L^2(\R_+)\oplus L^2(\R_+) \oplus \R,\quad  \H^\alpha :=
  H^{\alpha}(\R_+) \oplus H^{\alpha}(\R_+) \oplus ,\;\alpha \in\R.\]
This provides a rich framework for analysis of the solutions.

The outline for the details is now as follows
\begin{itemize}
\item In Section~\ref{sec:see} we will discuss the solution map for
  stochastic evolution equations on a class of interpolation spaces
  with focus on its dependence on the non-linearities and the initial state
\item In Section~\ref{sec:proofs} we then apply the results from
  Section~\ref{sec:see} to the fixed boundary problem~\eqref{eq:spde}
  and discuss the transformation between fixed and moving boundary problems.
\item The notation used in this paper is given in Appendix~\ref{sec:notation}.
\end{itemize}

Finally, Theorem~\ref{thm:existence} follows from
Proposition~\ref{prop:trafo}. As well, Theorem~\ref{thm:conv_smbp}
follows by Proposition~\ref{prop:trafo} but applying also
Theorem~\ref{thm:approx_Stefan_see} and continuity of the map
$F$ which is defined in~\eqref{eq:trafofct}; see
Lemma~\ref{lem:trafo}.\ref{iv:trafo} and \ref{iii:trafo}.  \hfill \qed

\begin{rmk}
  The results we will prove in Section~\ref{sec:see} can also be used
  to show continuous dependency of~\eqref{eq:smbp} on the coefficients $\mu$,
  $\sigma$ and $\zeta$, which might be of use e.\,g. for numerical approximations.
\end{rmk}


\section{Mild solution map for stochastic evolution equations}
\label{sec:see}
We now discuss a class of stochastic evolution equations with focus on continuous dependency of the solution
map in the coefficients of the equations. We first recall some facts
on fractional powers of linear operators and its domains. Then, we
discuss the general setting and finally the main results on continuity
of the mild solution map for stochastic evolution equations on Hilbert
spaces. 

\subsection{Preliminaries from analysis}
\label{ssec:prel}
In this subsection, let $(E, \norm{.}{E})$ be a Banach space and $A\colon
\dom(A)\subseteq E\to E$ be a densely defined and sectorial operator
with domain $\dom(A)\subset E$. Whenever necessary to apply results
from the literature which require complex Banach spaces we might
switch to the complexification without further mentioning.

We assume that the resolvent set of $A$ contains $[0,\infty)$ and there exists a $M > 0$ such that the resolvent $R(\lambda,A)$ satisfies 
\begin{equation}\label{eq:resolvent_eq}
  \norm{R(\lambda,A)}{L(E)} \le \frac{M}{1 + \lambda}, \qquad \text{for all $\lambda > 0$.}
\end{equation}
\begin{rmk}
  The conditions on $A$ are equivalent to each of the following statements
  \begin{itemize}
  \item  Equation \eqref{eq:resolvent_eq} holds, the resolvent set of $A$ contains $0$ and a sector
    \[\{\lambda \in \C: |\arg \lambda | < \theta\}\]
    for some $\theta \in (\pi/2,\pi)$. 
  \item The operator $A$ is sectorial and $-A$ is positive in the sense of \cite{lunardi2009interpolation}.
  \item $A$ is the generator of an analytic $C_0$-semigroup $(S_t)_{t \geq 0}$ of negative type. 
  \end{itemize}
  In particular, there exist $\delta$, $M > 0$ such that $\norm{S_t}{L(E)}
  \le M e^{-\delta t}$. 
\end{rmk}
The assumption ensures that fractional powers of $-A$ are well defined.
\begin{notation}
  For $\alpha \geq 0$ we write
  \begin{equation*} 
    E_\alpha :=  \dom((-A)^\alpha),\qquad \norm{x}{E_\alpha} := \norm{(- A)^\alpha x}{E}, \; x\in E_\alpha.
  \end{equation*}
  It is well-known that also $E_\alpha$ with the induced scalar
  product is a Banach space again, and when $E$ is a Hilbert space
  then so is $E_\alpha$. In particular, $\norm{.}{1}$ is equivalent to the graph norm of $A$ and the following continuous embedding relations hold for $\alpha \in [0,1]$:
  \begin{equation*}
    \dom(A) = E_1 \hookrightarrow E_\alpha \hookrightarrow E_0 = E.
  \end{equation*}
\end{notation}

We recall the following reiteration property.
\begin{prop}\label{prop:reiteration}
  Let $\alpha$, $\beta\in \R$, and $x\in E_{\alpha+\beta}\cap E_{\alpha}\cap E_{\beta}$. Then,
  \[ (-A)^{\alpha}((-A)^{\beta} x) = (-A)^{\beta}((-A)^{\alpha} x) = (-A)^{\alpha+\beta} x.\]
\end{prop}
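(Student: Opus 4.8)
The plan is to reduce the general law of exponents to its instance for purely negative powers, where all the operators involved are bounded, and then to bootstrap to arbitrary signs. By the remark preceding the statement, $-A$ generates an analytic $C_0$-semigroup $(S_t)_{t\ge0}$ with $\norm{S_t}{L(E)} \le M e^{-\delta t}$ and $0$ belongs to the resolvent set, so $-A$ is boundedly invertible and, for $s>0$, the negative power $(-A)^{-s}$ is a bounded operator. Writing $J_s := (-A)^{-s}$ for $s\ge0$ (with $J_0 := \Id$) and adopting the convention $E_\gamma := E$ for $\gamma<0$, the hypothesis $x \in E_{\alpha+\beta}\cap E_\alpha\cap E_\beta$ constrains $x$ only through the nonnegative exponents among $\alpha$, $\beta$, $\alpha+\beta$. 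Since by definition $(-A)^s = J_s^{-1}$ is the closed operator with domain $E_s = J_s(E)$ for $s>0$, the whole proposition reduces to the two building blocks below. (An alternative high-level route is to invoke multiplicativity of the holomorphic functional calculus applied to the power functions $z\mapsto(-z)^\alpha$, but this merely relocates the work into the calculus machinery.)

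The first building block is the semigroup identity $J_s J_t = J_{s+t}$ for $s,t\ge0$, together with the (standard) injectivity of each $J_s$. For this I use the Bochner-integral representation
\[
  J_s = \frac{1}{\Gamma(s)}\int_0^\infty t^{s-1} S_t \d t,\qquad s>0,
\]
which converges in $L(E)$ because of the exponential bound $\norm{S_t}{L(E)}\le M e^{-\delta t}$. Multiplying two such integrals, inserting $S_\tau S_\sigma = S_{\tau+\sigma}$, and applying Fubini (again legitimate by the exponential decay), the substitution $r = \tau+\sigma$ and the Beta-function identity $\int_0^r \tau^{s-1}(r-\tau)^{t-1}\d\tau = \Gamma(s)\Gamma(t)\Gamma(s+t)^{-1}r^{s+t-1}$ collapse the resulting double integral to $J_{s+t}$. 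In particular the family $\{J_s\}_{s\ge0}$ is commutative.

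The second building block is the bootstrapping, organized by the signs of $\alpha$ and $\beta$. If $\alpha,\beta\le0$ all three operators are bounded and the claim is exactly $J_{-\alpha}J_{-\beta}=J_{-\alpha-\beta}$. If $\alpha,\beta\ge0$, the hypothesis gives $x\in E_{\alpha+\beta}$, so with $y := (-A)^{\alpha+\beta}x$ one has $x = J_{\alpha+\beta}y = J_\beta(J_\alpha y)$; hence $x\in E_\beta$, $(-A)^\beta x = J_\alpha y\in E_\alpha$, and $(-A)^\alpha(-A)^\beta x = y = (-A)^{\alpha+\beta}x$, the reversed order being identical by symmetry. In the mixed case, say $\alpha = s\ge0$ and $\beta = -t\le0$, the hypothesis forces $x\in E_s$, so $x = J_s w$ for some $w\in E$; then $(-A)^{-t}x = J_t x = J_{s+t}w = J_s(J_t w)\in E_s$, and cancelling $J_s$ against $(-A)^s = J_s^{-1}$ gives $(-A)^s(-A)^{-t}x = J_t w$, which coincides with $(-A)^{\alpha+\beta}x$ in either subcase $\alpha+\beta\ge0$ or $\alpha+\beta<0$ by one further use of the semigroup identity; the order $(-A)^{-t}(-A)^s x = J_t w$ is checked the same way.

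I expect the main obstacle to be neither the analytic core (the Beta-function computation is routine once the Fubini bound is in place) nor the semigroup identity itself, but the \emph{domain bookkeeping} in the mixed-sign cases: before applying an unbounded power $(-A)^\gamma = J_\gamma^{-1}$ one must certify that the intermediate vector lies in $E_\gamma = J_\gamma(E)$, and it is precisely the hypothesis $x\in E_{\alpha+\beta}\cap E_\alpha\cap E_\beta$ that guarantees this. Injectivity of the $J_s$ is the tool that allows cancelling bounded factors and thereby transporting the identity through the unbounded operators.
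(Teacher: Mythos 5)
The paper does not prove this proposition at all: it is introduced with ``We recall the following reiteration property'' and treated as a standard fact from the literature on fractional powers of sectorial operators (Lunardi, Pazy), so there is no in-paper argument to compare against. Your write-up supplies the standard textbook proof and it is correct: the Balakrishnan-type representation $(-A)^{-s}=\Gamma(s)^{-1}\int_0^\infty t^{s-1}S_t\,\mathrm{d}t$, the semigroup law $J_sJ_t=J_{s+t}$ via Fubini and the Beta integral, and then the sign-by-sign bookkeeping using that $(-A)^{s}=J_s^{-1}$ with domain $E_s=J_s(E)$; the domain verifications in the mixed-sign case are exactly where the hypothesis $x\in E_{\alpha+\beta}\cap E_\alpha\cap E_\beta$ is consumed, and you handle them correctly. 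Two small points. First, you write that $-A$ generates $(S_t)$, whereas the paper's remark (correctly) has $A$ as the generator; the integral formula you then use is the one consistent with $A$ generating $S_t$, so only the prose, not the computation, carries the sign slip. Second, the injectivity of each $J_s$ is asserted as ``standard'' but is actually doing real work when you cancel $J_s$ against $(-A)^s$; it deserves its one-line derivation from the ingredients you already have (if $J_sx=0$ choose $m\in\N$ with $m\ge s$, then $J_mx=J_{m-s}J_sx=0$ and $x=(-A)^mJ_mx=0$ since $(-A)^{-1}$ is injective by $0\in\rho(A)$). With that line added, your proof is complete and self-contained, which is more than the paper offers.
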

\begin{rmk}\label{rmk:interpol:reiteration}
  For $\alpha>0$, the part of $A$ in $E_\alpha$, is again a densely defined and closed operator on $E_\alpha$. Moreover, it is the infinitesimal generator of the restriction of $S_t$ to $E_\alpha$, which is again an analytic and strongly continuous semigroup. The same holds true for the extension of $S_t$ to $E_\alpha$, when $\alpha <0$; see e.g. \cite[Ch.~II.5]{engel1999one}.
\end{rmk}
The following regularity property of $S_t$ between different
interpolation spaces $E_\alpha$, $\alpha \in [0,1]$ will be crucial in
the next section. We derive it from results in~\cite{lunardi2009interpolation} on interpolation spaces.
\begin{lem}\label{lem:StHa}
  Let $\beta \geq 0$ and $\alpha > \beta$. Then, for all $t>0$ and $x\in E_{\beta}$,
  \[ \norm{S_t x}{E_\alpha} \leq K_{\alpha,\beta} t^{\beta - \alpha} \norm{x}{E_\beta}. \]
\end{lem}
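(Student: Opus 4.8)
The plan is to reduce the statement to the classical smoothing estimate for the analytic semigroup generated by $A$, namely
\[
  \norm{(-A)^\gamma S_t}{L(E)} \leq K_\gamma\, t^{-\gamma}, \qquad \gamma \geq 0,\ t>0,
\]
and then read off the result from the definition of the norms on the fractional power domains. First I would write $\norm{S_t x}{E_\alpha} = \norm{(-A)^\alpha S_t x}{E}$. Since $x \in E_\beta$, the element $(-A)^\beta x$ is well defined, and by analyticity $S_t$ maps $E_\beta$ into $E_\alpha$, so $S_t x$ lies in $E_\alpha \cap E_{\alpha-\beta}\cap E_\beta$ (here I use $\alpha>\beta\geq 0$, whence $\alpha\geq \alpha-\beta$ and $\alpha\geq\beta$, together with the embeddings $E_\alpha \hookrightarrow E_{\alpha-\beta}, E_\beta$). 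Applying the reiteration property (Proposition~\ref{prop:reiteration}) with exponents $\alpha-\beta$ and $\beta$ to $S_t x$, and then commuting $S_t$ past $(-A)^\beta$ (Remark~\ref{rmk:interpol:reiteration}), gives
\[
  (-A)^\alpha S_t x = (-A)^{\alpha-\beta}(-A)^\beta S_t x = (-A)^{\alpha-\beta} S_t\,(-A)^\beta x.
\]
Writing $\gamma := \alpha-\beta>0$ and $y := (-A)^\beta x$, so that $\norm{y}{E} = \norm{x}{E_\beta}$, the claim reduces to $\norm{(-A)^\gamma S_t y}{E} \leq K_{\alpha,\beta}\, t^{-\gamma}\norm{y}{E}$.

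Second, I would establish the smoothing bound. The most robust route is the Dunford/Cauchy integral representation over a sectorial contour $\Gamma\subset \{\arg\lambda\in(\pi/2,\theta)\}$ enclosing the spectrum,
\[
  (-A)^\gamma S_t = \frac{1}{2\pi i}\int_\Gamma (-\lambda)^\gamma e^{\lambda t}\, R(\lambda,A)\d\lambda,
\]
where the branch of $(-\lambda)^\gamma$ is unambiguous because $-\lambda$ stays off the negative reals. Combining the resolvent bound~\eqref{eq:resolvent_eq}, $\norm{R(\lambda,A)}{L(E)}\lesssim |\lambda|^{-1}$, with $|(-\lambda)^\gamma|\sim|\lambda|^\gamma$ and the exponential decay of $e^{\lambda t}$ along the two rays of $\Gamma$ (on which $\Re\lambda\to-\infty$, dominating the polynomial growth), the integral converges for every $\gamma>0$, and the substitution $\lambda = z/t$ produces exactly the factor $t^{-\gamma}$. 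Alternatively — and this is the route suggested by the reference to~\cite{lunardi2009interpolation} — one treats integer $\gamma=k$ from $\norm{A^k S_t}{L(E)}\le C_k t^{-k}$, obtains $\gamma\in(0,1)$ from the characterization of $E_\gamma$ as an interpolation space together with the moment inequality, and then handles $\gamma\geq 1$ by splitting $\gamma = k+\theta$ and factoring
\[
  (-A)^\gamma S_t = \big((-A)^k S_{t/2}\big)\big((-A)^\theta S_{t/2}\big),
\]
so that the two factors are controlled by the integer and fractional bounds. Either way, combining the smoothing estimate with the reduction above finishes the proof with $K_{\alpha,\beta}:=K_\gamma$.

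The main technical point is the fractional smoothing bound $\norm{(-A)^\gamma S_t}{L(E)}\le C_\gamma t^{-\gamma}$; everything else is bookkeeping with reiteration and commutation. I would emphasize that on a general Banach space one cannot simply interpolate $E_\gamma=\dom((-A)^\gamma)$ between $E$ and $\dom(A)$ for non-integer $\gamma$ without bounded imaginary powers, which is why I prefer the direct contour estimate (valid uniformly in $\gamma$) over an abstract interpolation identity. A secondary point to keep clean is the justification that $S_t$ genuinely maps $E_\beta$ into $\dom((-A)^\alpha)$ and that the fractional powers appearing commute with $S_t$ on the relevant domains, so that the reiteration step is legitimate; this is exactly where the analyticity of the semigroup and Remark~\ref{rmk:interpol:reiteration} are essential.
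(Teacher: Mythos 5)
Your proof is correct. For comparison: the paper offers no written proof of this lemma at all — it simply states that the estimate is derived from results in \cite{lunardi2009interpolation} on interpolation spaces — so your argument is strictly more informative than what appears in the text. Your reduction is exactly the right bookkeeping: $S_t$ maps $E$ into $\bigcap_k\dom(A^k)$ by analyticity, so $S_tx\in E_\alpha\cap E_{\alpha-\beta}\cap E_\beta$ and Proposition~\ref{prop:reiteration} applies, and the commutation $(-A)^\beta S_t x=S_t(-A)^\beta x$ for $x\in\dom((-A)^\beta)$ is the standard fact that fractional powers commute with the resolvent and hence with the semigroup on their domains. This correctly isolates the single analytic input $\norm{(-A)^\gamma S_t}{L(E)}\le C_\gamma t^{-\gamma}$, and both of your routes to it are valid: the contour representation works because $(-\lambda)^\gamma e^{\lambda t}$ decays along the rays $\arg\lambda=\pm\omega$, $\omega\in(\pi/2,\theta)$, and the substitution $\lambda=z/t$ scales out $t^{-\gamma}$ (note only that the uniform resolvent bound you need on those rays is the sectorial extension of~\eqref{eq:resolvent_eq}, which the paper's Remark guarantees); the second route via $\norm{A^kS_t}{L(E)}\le C_kt^{-k}$, the moment inequality for $\gamma\in(0,1)$, and the splitting $S_t=S_{t/2}S_{t/2}$ is the argument closest in spirit to the Lunardi reference the paper points to. Your caution that $\dom((-A)^\gamma)$ need not be an interpolation space between $E$ and $\dom(A)$ on a general Banach space is well placed, though here $E$ is Hilbert and $A$ is eventually taken self-adjoint, so the interpolation identification would also be available; the moment inequality avoids the issue in any case. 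No gaps.
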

If $\beta\in (\alpha-1,\alpha)$, then the proportionality constant on the right hand side is
integrable at $t = 0$, which will be a key property used in the
following sections. To deal with this singularity we also have to understand
the convolutions below. 
\begin{lem}
  \label{lem:sup_conv}
  Let $T\in (0,\infty)$, $\alpha\in (0,1)$ and let $\phi\colon [0,T]\to [0,\infty)$ be
  Borel measurable and bounded. Then, 
  \begin{equation*}
    \sup_{0\leq t\leq T} \int_0^t \phi(s) (t-s)^{-\alpha} \d s \leq
    \int_0^T \left(\sup_{0\leq r\leq s} \phi(r)\right) (T-s)^{-\alpha} \d s.
  \end{equation*}
\end{lem}
\begin{proof}
  \begin{multline*}
    \sup_{0\leq t\leq T} \int_0^t \phi(s) (t-s)^{-\alpha} \d s =
    \sup_{0\leq t\leq T} \int_0^t \phi(t-s) s^{-\alpha} \d s \\
    \leq \sup_{0\leq t\leq T} \int_0^t \sup_{0\leq r\leq T-s} \phi(r)
    s^{-\alpha} \d s 
    =\int_0^T \sup_{0\leq r\leq s} \phi(r)
    (T-s)^{-\alpha} \d s\qedhere
  \end{multline*}
\end{proof}
We will also need the following version of
Gronwall's lemma, see~\cite[Lem 7.0.3]{lunardi1995analytic} or, for a proof~\cite[p. 188]{henry2006geometric}.
\begin{lem}[Extended Gronwall's lemma]\label{lem:gronwall}
  For all $\alpha\in(0,1)$, $b\in[0,\infty)$, $T \in [0,\infty)$ there exists
  a constant $K_{\alpha, b,T} \in [0,\infty)$ such that for all $a\in
  [0,\infty)$ and all integrable
  $\phi:[0,T]\to [0,\infty)$ which are satisfying for all
  $t\in [0,T]$,
  \begin{equation*}
    \phi(t) \leq a + b\int_0^t \phi(s) (t-s)^{-\alpha} \d s, 
  \end{equation*}
  it holds for all $t\in [0,T]$,
  \begin{equation*}
    \phi(t) \leq a K_{\alpha,b,T}.
  \end{equation*}
\end{lem}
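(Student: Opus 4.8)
The plan is to prove this singular Gronwall inequality by the classical iteration argument: substitute the hypothesis into itself indefinitely and sum the resulting series. Introduce the positive linear integral operator
\[ (\Lambda\psi)(t) := b\int_0^t (t-s)^{-\alpha}\psi(s)\d s, \]
acting on nonnegative integrable functions on $[0,T]$. Since $\alpha<1$, the kernel $s\mapsto s^{-\alpha}\mathbf 1_{[0,T]}(s)$ lies in $L^1$, so by Young's inequality $\Lambda$ maps $L^1(0,T)$ into itself and is order-preserving on nonnegative functions. The hypothesis reads $\phi\le a+\Lambda\phi$ pointwise. Applying $\Lambda$ repeatedly, using monotonicity and linearity (in particular $\Lambda a = a\,\Lambda\mathbf 1$ for the constant $a$), a straightforward induction on $n$ gives
\[ \phi(t) \le a\sum_{k=0}^{n-1}(\Lambda^k\mathbf 1)(t) + (\Lambda^n\phi)(t), \qquad t\in[0,T]. \]

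The key computation is the explicit form of the iterated kernel. Using the Euler Beta integral $\int_0^1 u^{p-1}(1-u)^{q-1}\d u = \Gamma(p)\Gamma(q)/\Gamma(p+q)$ (valid for $p,q>0$), one checks that $\Lambda$ sends $s\mapsto s^{\gamma}$ to $b\,\Gamma(\gamma+1)\Gamma(1-\alpha)\Gamma(\gamma+2-\alpha)^{-1}\,t^{\gamma+1-\alpha}$ for $\gamma>-1$, and then a short induction (swapping the order of integration and substituting $r=s+(t-s)u$ in the inner integral) yields
\[ (\Lambda^k\mathbf 1)(t) = \frac{\big(b\,\Gamma(1-\alpha)\big)^{k}}{\Gamma(1+k(1-\alpha))}\,t^{k(1-\alpha)}, \qquad (\Lambda^n\phi)(t) = \frac{\big(b\,\Gamma(1-\alpha)\big)^{n}}{\Gamma(n(1-\alpha))}\int_0^t (t-s)^{n(1-\alpha)-1}\phi(s)\d s. \]
Because $\Gamma(1+k(1-\alpha))$ grows faster than any geometric sequence in $k$, the series $\sum_{k\ge 0}(\Lambda^k\mathbf 1)(t)$ converges for every $t$; since each summand is increasing in $t$, its sum is bounded on $[0,T]$ by its value at $t=T$. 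This finite quantity (a Mittag--Leffler function evaluated at $b\,\Gamma(1-\alpha)T^{1-\alpha}$) depends only on $\alpha,b,T$ and will serve as $K_{\alpha,b,T}$.

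It then remains to kill the remainder term. For $n$ large enough that $n(1-\alpha)\ge 1$, the factor $(t-s)^{n(1-\alpha)-1}$ is bounded on $[0,T]$ by $T^{n(1-\alpha)-1}$, so
\[ 0 \le (\Lambda^n\phi)(t) \le \frac{\big(b\,\Gamma(1-\alpha)\big)^{n}}{\Gamma(n(1-\alpha))}\,T^{n(1-\alpha)-1}\,\norm{\phi}{L^1(0,T)} \xrightarrow[\;n\to\infty\;]{} 0, \]
again by the super-exponential growth of $\Gamma(n(1-\alpha))$; here only integrability of $\phi$ is used, not boundedness. Letting $n\to\infty$ in the iterated inequality gives $\phi(t)\le a\,K_{\alpha,b,T}$ for all $t\in[0,T]$, as claimed. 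The main obstacle is entirely the bookkeeping of the iterated kernel: one must verify the Beta-function recursion carefully and confirm that the Gamma factor in the denominator outpaces the geometric numerator, which is precisely what simultaneously guarantees convergence of the series defining $K_{\alpha,b,T}$ and the vanishing of the remainder.
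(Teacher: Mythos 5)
Your proof is correct: the iteration $\phi\le a\sum_{k<n}\Lambda^k\mathbf 1+\Lambda^n\phi$, the Beta-function computation of the iterated kernels, the Mittag--Leffler bound for $K_{\alpha,b,T}$, and the vanishing of the remainder using only integrability of $\phi$ all check out. The paper does not prove the lemma itself but cites Henry (p.~188), whose argument is exactly this one, so your proposal reproduces the intended proof.
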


We also keep the following basic lemma from analysis.
\begin{lem}
  \label{lem:unif_convergence}
  Let $(E,\norm{.}{E})$, $(V,\norm{.}{V})$ be Banach spaces and for $n\in \bar \N$ let $\Phi_n \in \Lip(E;V)$, be such that for all $x\in E$
  \[\lim_{n\to\infty}\norm{\Phi_\infty(x) - \Phi_n(x)}{V} = 0,\]
  and
  \[\sup_{n\in \N} \snorm{\Phi_n}{\Lip(E;V)}<\infty.\]
  Then, for all $K\subset E$ compact, it holds that
  \begin{equation*}
    \lim_{n\to\infty} \sup_{x\in K}\norm{\Phi_\infty(x) - \Phi_n(x)}{V} = 0.
  \end{equation*}
\end{lem}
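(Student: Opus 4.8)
The plan is to run the standard equicontinuity argument: pointwise convergence is upgraded to uniform convergence on compacts by combining the uniform (equi-)Lipschitz bound with total boundedness of $K$, reducing the problem to finitely many test points.

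First I would set $L := \sup_{n\in\N}\snorm{\Phi_n}{\Lip(E;V)}$, which is finite by hypothesis, and observe that $\Phi_\infty$ is itself $L$-Lipschitz. Indeed, for fixed $x,y\in E$ one has $\norm{\Phi_n(x)-\Phi_n(y)}{V}\le L\norm{x-y}{E}$ for every $n\in\N$; passing to the limit $n\to\infty$ and using pointwise convergence of $\Phi_n$ at both $x$ and $y$ gives $\norm{\Phi_\infty(x)-\Phi_\infty(y)}{V}\le L\norm{x-y}{E}$. Hence the entire family $\{\Phi_n : n\in\bar\N\}$ is equi-Lipschitz with constant $L$.

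Next, I would fix $\varepsilon>0$ and set $\delta := \varepsilon/(3(L+1))$. Since $K$ is compact, it is totally bounded, so there exist finitely many centers $x_1,\dots,x_m\in K$ with $K\subseteq\bigcup_{j=1}^m \{x\in E : \norm{x-x_j}{E}<\delta\}$. Applying the pointwise convergence hypothesis at these finitely many points, I would choose $N\in\N$ such that $\norm{\Phi_\infty(x_j)-\Phi_n(x_j)}{V}<\varepsilon/3$ for all $n\ge N$ and all $j\in\{1,\dots,m\}$.

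Finally, for an arbitrary $x\in K$ I would pick an index $j$ with $\norm{x-x_j}{E}<\delta$ and split by the triangle inequality into the three contributions $\Phi_\infty(x)-\Phi_\infty(x_j)$, $\Phi_\infty(x_j)-\Phi_n(x_j)$ and $\Phi_n(x_j)-\Phi_n(x)$. The first and third are bounded by $L\delta\le\varepsilon/3$ using the equi-Lipschitz property (including that of $\Phi_\infty$ established above), and the middle by $\varepsilon/3$ using the choice of $N$; summing and taking the supremum over $x\in K$ yields $\sup_{x\in K}\norm{\Phi_\infty(x)-\Phi_n(x)}{V}\le\varepsilon$ for all $n\ge N$, which is the assertion. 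There is no genuine obstacle here: the only point needing a (one-line) argument is that the pointwise limit $\Phi_\infty$ inherits the uniform Lipschitz constant, and the only structural ingredient is total boundedness of $K$, which performs the reduction from the full compact set to finitely many test points.
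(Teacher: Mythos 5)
Your proof is correct and follows essentially the same route as the paper: cover the compact set by finitely many $\delta$-balls, use pointwise convergence at the centers, and control the remainder with the uniform Lipschitz bound via the triangle inequality. The only cosmetic difference is that you re-derive the Lipschitz constant of $\Phi_\infty$ from the pointwise limit (the paper simply uses $\sup_{n\in\bar\N}\snorm{\Phi_n}{\Lip(E;V)}$, finite since $\Phi_\infty\in\Lip(E;V)$ by hypothesis) and you split $\varepsilon$ into thirds rather than $\varepsilon/2+2L\delta$.
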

\begin{proof}
  Let $K\subset E$ be compact and define
  \[ L_\Phi := \sup_{n\in \bar\N} \snorm{\Phi_n}{\Lip(E;V)}. \] 
  For $\epsilon >0$ set $\delta:=
  \sfrac{\epsilon}{(4 L_\Phi)}$ and let $N\in \N$ and $x_1,\ldots, x_N\in K$ be such
  that $K\subseteq \bigcup_{k=1}^N
  B_E(x_k,\delta)$, where
  \[ B_E((x, \delta):= \{ y\in V\colon \norm{x-y}{V} < \delta\}, \quad
    x\in V.\]
  By strong convergence of $(\Phi_n)_{n\in \N}$ we can choose $n_0\in \N$ such that for all $n\geq n_0$ it holds that
  \[\sup_{k=1,..,N} \norm{\Phi_n(x_k) - \Phi_\infty(x_k)}{V}<\sfrac{\epsilon}{2}.\]
  Hence, for all $n\geq n_0$,
  \begin{gather*}
    \begin{split}
      \sup_{x\in K} \norm{\Phi_n(x) - \Phi_\infty(x)}{V} &\leq \sup_{k=1,...,N}\sup_{x\in B_E((x_k,\delta)} \big[\norm{\Phi_n(x) - \Phi_n(x_k)}{V} \\
      &\qquad + \norm{\Phi_\infty(x) - \Phi_\infty(x_k)}{V} + \norm{\Phi_\infty(x_k) - \Phi_n(x_k)}{E}\big]\\
      &< 2 L_\Phi\delta + \epsilon/2 = \epsilon.      \qedhere
    \end{split}
  \end{gather*}
\end{proof}

\subsection{Setting}
\label{sec:sse_setting}
Let $T\in (0,\infty)$, 
let $(E, \norm{.}{E}, \ip{.}{.}{E})$ and $(U, \norm{.}{U}, \ip{.}{.}{U})$ be real separable Hilbert
spaces, 
let $(S_t)_{t\in [0,\infty)}$ be an analytic and strongly continuous
semigroup of negative type with generator $A\colon \dom(A)\subseteq E
\to E$,
and for $\alpha \in \R$ let 
\begin{equation*}
  E_\alpha := \dom((-A)^{\alpha}), \qquad \norm{x}{E_\alpha} :=
  \norm{(-A)^{\alpha}x}{E},\, x\in E_\alpha,
\end{equation*}
let $(\Omega, \F, \FF,\PP)$ be a stochastic basis, with filtration
$\FF=(\F_t)_{t\in [0,\infty)}$,
let $W$ be an $\Id_E$-cylindrical
Wiener process on $(\Omega,\F,\FF,\PP)$.

\subsection{Continuity of the solution map}
We now study continuity properties of the mild solution map for stochastic
evolution equations. Part~\ref{i:continuity_strong} can also be derived from \cite[Proposition 3.2]{kunze2012continuous} where the proof is sketched. We will
go into more details here but restrict to a framework on Hilbert spaces. This will be sufficient to cover the problems introduced above and make the proof more direct.
\begin{thm}\label{thm:approx}
  Assume the Setting~\ref{sec:sse_setting} and let $q\in
  (2,\infty)$, $\alpha \in [0,1)$.
  Then, the following holds true.
  \begin{enumerate}[label=(\roman*)]
  \item \label{i:existence} There exists a unique mapping
    \begin{equation*}
      \sS\colon L^q(\Omega, \F_0,\PP; E_{\alpha}) \times\Lip(E_\alpha; E) \times \Lip(E_{\alpha};
      \HS(U; E_{\alpha})) \to \sL^q_{\FF}(E_{\alpha})
    \end{equation*}
    which satisfies that for all $X_0\in L^q(\Omega,\F_0,\PP; E_\alpha)$, $B\in \Lip(E_{\alpha}; E)$, $C\in
    \Lip(E_{\alpha};\HS(U;E_{\alpha}))$ and all $t\in [0,T]$ it holds
    that for $X= \sS(X_0,B,C)$, almost surely,
    \begin{equation}
      \label{eq:see_mild}
      X(t)=  S_t X_0 + \int_0^t
        S_{t-s} B(X(s)) \d s + \int_0^t
      S_{t-s} C(X(s)) \d W_s.
    \end{equation}
  \item \label{i:continuity_lip} For each
    $\widetilde B\in \LipBd(E_\alpha;E)$ and $\widetilde C\in
    \LipBd(E_\alpha; \HS(U;E_\alpha))$ there exists a constant $L\in
    (0,\infty)$ such that for all $X_0$, $\widetilde X_0\in
    L^q(\Omega;E_{\alpha})$, $B \in \LipBd(E_\alpha; E)$ and $C \in \LipBd(E_\alpha; \HS(U;E_\alpha))$
    \begin{multline}
      \label{eq:sol_map_lip_unif}
      \norm{\sS(X_0, B, C) - \sS(\widetilde X_0, \widetilde B,
        \widetilde C)}{\sL^q_{\FF}(E_\alpha)}\\*
      \leq L\left(\norm{X_0 - \widetilde  X_0}{L^q(\Omega;
          E_\alpha) }
        + \norm{B- \widetilde B}{B(E_\alpha;E)} + \norm{C-\widetilde C}{B(E_\alpha; \HS(U;E_\alpha))}\right).
    \end{multline}
    In particular, the restriction of $\sS$ to
    \[L^q(\Omega, \F_0,\PP; E_{\alpha}) \times\LipBd(E_\alpha; E) \times \LipBd(E_{\alpha};
    \HS(U; E_{\alpha}))\]
    is Lipschitz continuous on bounded
    sets, globally w.\,r.\,t. $X_0$.
  \item \label{i:continuity_strong} For $X_{0,n}\in
    L^q(\Omega,\F_0,\PP; E_\alpha)$, $B_n \in \Lip(E_\alpha;E)$,
    $C_n \in \Lip(E_\alpha; \HS(U;E_\alpha))$, $n\in \bar \N$, such
    that for all $x\in E_{\alpha}$,
    \begin{align*}
      \lim_{n\to \infty}\norm{X_{0,\infty} -
      X_{0,n}}{L^q(\Omega;E_{\alpha})} &=0,\\
      \lim_{n\to \infty} \norm{B_\infty(x) - B_n(x)}{E} &=0,\\
      \lim_{n\to \infty}\norm{C_\infty(x) - C_n(x)}{\HS(U;E_{\alpha})} &= 0,
    \end{align*}
    and
    \begin{equation*}
      M:= \sup_{n\in\bar \N}\left( \norm{B_n}{\Lip(E_{\alpha}; E)} +
        \norm{C_n}{\Lip(E_{\alpha};\HS(U;E_\alpha))}\right) <\infty,
    \end{equation*}
    it holds that $\lim_{n\to \infty} \sS(X_{0,n},B_n,C_n) = \sS(X_{0,\infty},B_\infty,
    C_\infty)$.
  \end{enumerate}
\end{thm}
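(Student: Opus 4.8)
The plan is to treat the three parts in order, building each on the mild formulation~\eqref{eq:see_mild}, the smoothing estimate of Lemma~\ref{lem:StHa}, and the extended Gronwall inequality of Lemma~\ref{lem:gronwall}. For part~\ref{i:existence} I would run a Banach fixed-point argument for the map sending $X$ to the right-hand side of~\eqref{eq:see_mild} on $\sL^q_{\FF}(E_\alpha)$. The drift convolution is controlled via Lemma~\ref{lem:StHa} with $\beta=0$: since $B$ maps $E_\alpha$ into $E=E_0$, one has $\norm{S_{t-s}B(X(s))}{E_\alpha}\leq K_{\alpha,0}(t-s)^{-\alpha}\norm{B(X(s))}{E}$, and the kernel $(t-s)^{-\alpha}$ is integrable because $\alpha<1$. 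The stochastic convolution is handled by the factorization method: one writes $\int_0^t S_{t-s}C(X(s))\d W_s$ as the deterministic convolution against $(t-r)^{\gamma-1}$ of $Y(r)=\int_0^r (r-s)^{-\gamma}S_{r-s}C(X(s))\d W_s$, so that, since $C$ maps into $\HS(U;E_\alpha)$ and $S_t$ restricts to a bounded semigroup on $E_\alpha$ (Remark~\ref{rmk:interpol:reiteration}), no smoothing of the semigroup is needed and Burkholder--Davis--Gundy applies with $2\gamma<1$. The choice $\gamma\in(1/q,1/2)$ required to make both the stochastic integrability and the final supremum-in-time $L^q$-bound work is exactly what forces $q\in(2,\infty)$. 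Combining these with the Lipschitz bounds on $B,C$ and Lemma~\ref{lem:gronwall} yields a contraction (after patching, or in an equivalent weighted norm), hence existence and uniqueness of $\sS$.

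For part~\ref{i:continuity_lip} I would apply~\eqref{eq:see_mild} to the difference of $X=\sS(X_0,B,C)$ and $\widetilde X=\sS(\widetilde X_0,\widetilde B,\widetilde C)$. The decisive step is the splitting $B(X(s))-\widetilde B(\widetilde X(s))=\big(B(X(s))-\widetilde B(X(s))\big)+\big(\widetilde B(X(s))-\widetilde B(\widetilde X(s))\big)$, and analogously for $C$. The first summand is bounded in $E$ by $\norm{B-\widetilde B}{B(E_\alpha;E)}$, the second by $\snorm{\widetilde B}{\Lip(E_\alpha;E)}\norm{X(s)-\widetilde X(s)}{E_\alpha}$. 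Feeding this into the estimates of part~\ref{i:existence} and applying Lemma~\ref{lem:gronwall} gives~\eqref{eq:sol_map_lip_unif} with a constant $L$ depending only on the Lipschitz constants of the fixed maps $\widetilde B,\widetilde C$ (which is why they, and not $B,C$, are fixed in the statement), the $X_0$-dependence entering linearly and globally.

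For part~\ref{i:continuity_strong} I would again write the mild equation for $X_n-X_\infty$, with $X_n=\sS(X_{0,n},B_n,C_n)$, and split the drift difference as $B_n(X_n(s))-B_\infty(X_\infty(s))=\big(B_n(X_n(s))-B_n(X_\infty(s))\big)+R_n^B(s)$, where $R_n^B(s):=B_n(X_\infty(s))-B_\infty(X_\infty(s))$, and analogously $R_n^C$. The Lipschitz parts are controlled by the uniform bound $M$, which produces a Gronwall integral whose constant depends only on $M$; thus Lemma~\ref{lem:gronwall} reduces the assertion to showing that the error terms $R_n^B,R_n^C$ (together with the initial-data term, which vanishes by hypothesis) contribute nothing in the limit.

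The main obstacle is precisely the control of these error terms, and the key observation resolving it is that for each $\omega$ the path $s\mapsto X_\infty(s,\omega)$ is continuous in $E_\alpha$, so its range $K_\omega:=\{X_\infty(s,\omega):s\in[0,T]\}$ is \emph{compact}. This is what permits the use of Lemma~\ref{lem:unif_convergence} (uniform convergence of $B_n\to B_\infty$ and $C_n\to C_\infty$ on compacts) --- the naive alternative of uniform convergence on balls is unavailable, since balls in the infinite-dimensional space $E_\alpha$ are not compact. For each $\omega$,
\[ \sup_{s\leq T}\norm{R_n^B(s)}{E}\leq \sup_{x\in K_\omega}\norm{B_n(x)-B_\infty(x)}{E}\to 0,\qquad n\to\infty, \]
and likewise for $R_n^C$. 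The uniform linear-growth bound $\norm{B_n(x)}{E}\leq M\norm{x}{E_\alpha}+\sup_n\norm{B_n(0)}{E}$ (finite since $B_n(0)\to B_\infty(0)$) furnishes the $q$-integrable dominating function $2M\sup_{s\leq T}\norm{X_\infty(s)}{E_\alpha}+c$, integrable because $X_\infty\in\sL^q_{\FF}(E_\alpha)$; dominated convergence then gives
\[ \E{\sup_{s\leq T}\norm{R_n^B(s)}{E}^q}\to 0\quad\text{and}\quad \E{\sup_{s\leq T}\norm{R_n^C(s)}{\HS(U;E_\alpha)}^q}\to 0. \]
Since the error is thus uniformly small in $s$, it factors out of the (integrable) convolution kernels, and in particular the singular kernel $(t-s)^{-\alpha}$ causes no difficulty. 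With all error terms shown to vanish, a final application of Lemma~\ref{lem:gronwall} (using Lemma~\ref{lem:sup_conv} to pass the supremum in $t$ through the convolution) yields $\sS(X_{0,n},B_n,C_n)\to\sS(X_{0,\infty},B_\infty,C_\infty)$ in $\sL^q_{\FF}(E_\alpha)$.
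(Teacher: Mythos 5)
Your proposal is correct and follows essentially the same route as the paper: the same splitting $B(X)-\widetilde B(\widetilde X)=(B-\widetilde B)(X)+\widetilde B(X)-\widetilde B(\widetilde X)$ fed into Lemma~\ref{lem:StHa}, Lemma~\ref{lem:sup_conv} and the extended Gronwall lemma for part~\ref{i:continuity_lip}, and for part~\ref{i:continuity_strong} the same key observation that the error terms are evaluated along the compact trajectory of $X_\infty$, so Lemma~\ref{lem:unif_convergence} plus a linear-growth domination and dominated convergence finish the argument. The only (cosmetic) divergence is in part~\ref{i:existence}, where you sketch a fixed-point/factorization proof while the paper simply cites existence and uniqueness from the literature and controls the stochastic convolution via a maximal inequality rather than factorization.
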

\begin{proof}
  Note that the integral equation~\eqref{eq:see_mild} admits a
  solution which admits a continuous modification $X$, see
  \cite[Theorem 3.9]{keller2016stefan} or in a Banach space
  framework~\cite[Theorem 6.2]{van2008stochastic}. Moreover, $X$
  is unique, up to changes on sets of measure zero, among all predictable processes
  $Y\colon \Omega\times [0,T]\to E_{\alpha}$ such that 
  \[ \sup_{0\leq t\leq T} \E{\norm{Y(t)}{E_{\alpha}}^q} <\infty.\]  
  Moreover, \cite[Theorem 3.9]{keller2016stefan} also tells us that 
  \[ \norm{X}{\sL^q_\FF(E_{\alpha})} <\infty\]
  so that $X\in \sL^q_\FF(E_{\alpha})$. 
  
  Let 
  $X_0$, $\widetilde X_0 \in L^q(\Omega,\F_0,\PP; E_\alpha)$,
  $B$, $\widetilde B \in \Lip(E_\alpha;E)$,
  $C$, $\widetilde C \in \Lip(E_\alpha; \HS(U;E_\alpha))$,
  and let $X := \sS(X_0,B, C)$, $\widetilde X:= \sS(\widetilde X_0, \widetilde
  B, \widetilde C)$, then using Lemma~\ref{lem:StHa}, Jensen's inequality and Burkholder-type
  inequality for stochastic convolutions, see~\cite[Theorem
  1.1]{prato1992note}, we get constants $K_q$, $K_{\alpha,S, T}\in(0,\infty)$ such
  that for all $r\in (0,T]$,
  \begin{align*}
    \hspace*{.5em}&\hspace*{-.5em}
                   \E{\sup_{0\leq t\leq r}\norm{X(t)- \widetilde X(t)}{E_\alpha}^q}\\
                 &\leq 3^{q-1}\norm{S}{B([0,T];E_\alpha)}^q \norm{X_0 - \widetilde
                   X_0}{L^q(\Omega,\F_0,\PP; E_\alpha)}^q\\
                 &\quad + 3^{q-1} K_{\alpha, S, T}^q 
                   \E{\sup_{0\leq t\leq r} \left(\int_0^t\norm{B(X(s)) - \widetilde B(\widetilde
                   X(s))}{E} (t-s)^{-\alpha}\d s\right)^q} \\
                 &\quad + 3^{q-1} \int_0^t
                   \E{\sup_{0\leq t\leq r} \norm{\int_0^t S_{t-s}(C(X(s)) - \widetilde C(\widetilde
                   X(s)))\d W_s}{E_\alpha}^q} \\     
                 &\leq 3^{q-1}\norm{S}{B([0,T];E_\alpha)}^q \norm{X_0 - \widetilde
                   X_0}{L^q(\Omega,\F_0,\PP; E_\alpha)}^q\\
                 &\quad + \left(\frac{6}{1-\alpha}\right)^{q-1} \! T^{(1-\alpha) (q-1)}
                   K_{\alpha, S, T}^q 
                   \E{ \sup_{0\leq t\leq r} \int_0^t \norm{B(X(s)) - \widetilde B(X(s))}{E}^q (t-s)^{-\alpha}\d s} \\
                 &\quad + \left(\frac{6}{1-\alpha}\right)^{q-1} \! T^{(1-\alpha) (q-1)}
                   K_{\alpha, S, T}^q 
                   \E{\sup_{0\leq t\leq r} \int_0^t \norm{\widetilde B(X(s)) - \widetilde B(\widetilde X(s))}{E}^q (t-s)^{-\alpha}\d s} \\
                 &\quad + 6^{q-1} K_{q} \norm{S}{B([0,T];E_\alpha)}^q T^{q/2-1}
                   \E{ \int_0^r \norm{(C(X(s)) - \widetilde C(
                   X(s)))}{\HS(U;E_{\alpha})}^q \d s }\\
                 &\quad + 6^{q-1} K_{q}\norm{S}{B([0,T];E_\alpha)}^q T^{q/2-1}
                   \E{ \int_0^r \norm{(\widetilde C(X(s)) - \widetilde C(\widetilde
                   X(s)))}{\HS(U;E_{\alpha})}^q \d s}.
  \end{align*}  
  Now, note that for each $s\in (0,r)$, and $r\in (0,T]$,
  \begin{equation*}
    1 = s^{\alpha} s^{-\alpha}\leq r^\alpha s^{-\alpha}
  \end{equation*}
  and recall Lemma~\ref{lem:sup_conv} and Fubini's theorem to obtain a constant
  $\widetilde K_{\alpha, p, T, S}>0$ such that
  \begin{align*}
    \hspace*{2em}&\hspace*{-2em}
                   \E{\sup_{0\leq t\leq r}\norm{X(t)- \widetilde X(t)}{E_\alpha}^q}\\
                 &\leq 3^{q-1}\norm{S}{B([0,T];E_\alpha)}^q \norm{X_0 - \widetilde
                   X_0}{L^q(\Omega,\F_0,\PP; E_\alpha)}^q\\
                 &\qquad + \widetilde K_{\alpha,p,T, S}
                   \int_0^T \E{  \sup_{0\leq t\leq s}\norm{B(X(t)) -
                   \widetilde B(X(t))}{E}^q} (T-s)^{-\alpha}\d s \\
                 &\qquad + \widetilde K_{\alpha,p,T, S}
                   \int_0^T \E{  \sup_{0\leq t\leq s}\norm{C(X(t)) -
                   \widetilde C(X(t))}{\HS(U;E_\alpha)}^q} (T-s)^{-\alpha}\d s \\
                 &\qquad + \widetilde K_{\alpha,p,T, S}
                   \left(\snorm{\widetilde B}{\Lip(E_\alpha;E)}^q +                   
                   \snorm{\widetilde C}{\Lip(E_{\alpha;E};
                   \HS(U;E_{\alpha}))}^q\right)\\
                 &\qquad\quad \times
                   \int_0^r  \E{\sup_{0\leq t\leq s}\norm{X(t)-
                   \widetilde X(t)}{E_\alpha}^q} (r-s)^{-\alpha}\d s
  \end{align*}      
  Let $M>0$ be such that $\snorm{\widetilde B}{\Lip} + \snorm{\widetilde
    C}{\Lip}\leq M$. 
  Hence, by Gronwall's lemma, see Lemma~\ref{lem:gronwall},
  there exists a constant $K_{\alpha, p, T, S, M}$, depending on $\alpha, p, T, S, M$ such that
  \begin{equation}
    \label{eq:sol_map_continuity}
    \begin{aligned}
      \hspace{1em}&\hspace{-1em}
      \norm{X-\widetilde X}{\sL^q_\FF(E_\alpha)}^q \\
      &\leq K_{\alpha, p, T, S, M}^q  \norm{X_0 - \widetilde X_0}{L^q(\Omega,\F_0,\PP; E_\alpha)}^q\\
      &\quad + K_{\alpha, p, T, S, M}^q 
      \int_0^T \E{  \sup_{0\leq t\leq s}\norm{B(X(t)) -
          \widetilde B(X(t))}{E}^q} (T-s)^{-\alpha}\d s \\
      &\quad + K_{\alpha, p, T, S, M}^q 
      \int_0^T \E{  \sup_{0\leq t\leq s}\norm{C(X(t)) -
          \widetilde C(X(t))}{\HS(U;E_\alpha)}^q} (T-s)^{-\alpha}\d s.
    \end{aligned}
  \end{equation}
  This yields
  \begin{multline*}
    \norm{X-\widetilde X}{\sL^q_\FF}  \leq K_{\alpha, p, T, S, M}
    \left( \norm{X_0 - \widetilde X_0}{L^q}
      \right. \\ \left.+ (T^{1-\alpha}/(1-\alpha))^{1/q}\sup_{x\in E_{\alpha}}\norm{B(x)-\widetilde
          B(x)}{E} 
  \right.\\
   \left.+(T^{1-\alpha}/(1-\alpha))^{1/q}\sup_{x\in E_{\alpha}}
     \norm{C(x)-\widetilde
          C(x)}{\HS(U;E_{\alpha})}\right),
  \end{multline*}
  which by definition of $\norm{.}{\LipBd}$ finishes the proof of~\ref{i:continuity_lip}.

  To prove the strong continuity claim~\ref{i:continuity_strong}, let
  $X_{0,n}$, $B_n$, $C_n$ such that the conditions of
  item~\ref{i:continuity_strong} are fulfilled, and set 
  $X_n = \sS(X_{0,n},B_n,C_n)$, $n\in \bar \N$. 
  For each
  $\omega\in \Omega$ define 
  $\mathcal K(\omega):= X_\infty([0,T])$. By continuity of $X_\infty$,
 $\mathcal K(\omega)\subset E_\alpha$ is compact for
  almost all $\omega\in \Omega$. Hence, by Lemma~\ref{lem:unif_convergence},
  \begin{equation}
    \label{eq:asconv}
    \P{\lim_{n\to\infty} \sup_{x\in \mathcal K} \left(\norm{B_\infty(x) -
          B_n(x)}{E} + \norm{C_\infty(x) - C_n(x)}{\HS(U;E_{\alpha})}\right)=0}= 1.
  \end{equation}
  By linear growth of Lipschitz continuous functions we also get,
  \begin{align*}
    \hspace*{2em}&\hspace*{-2em}
                   \E{\sup_{n\in \N}\sup_{0\leq t\leq T}\norm{B_\infty(X_\infty(t)) -
                   B_n(X_\infty(t))}{E}^q}\\
                 &\leq 2^{q-1}\E{\sup_{n\in\N}\sup_{0\leq t\leq
                   T}\left(\norm{B_\infty(X_\infty(t))}{E}^q +
                   \norm{B_n(X_\infty(t))}{E}^q\right)}\\
                 &\leq 2^{q}\E{\sup_{n\in\bar\N}\sup_{0\leq t\leq T}\norm{B_n(X_\infty(t))}{E}^q}\\
                 &\leq
                   4^{q}\left(\sup_{n\in\bar\N}\norm{B_n}{\Lip(E_\alpha;E)}^q\right)\left(1+\norm{X_\infty}{\sL^q_\FF}^q\right)<\infty. 
  \end{align*}
  and on the same way we get
  \begin{align*}
    \hspace*{2em}&\hspace*{-2em}
    \E{\sup_{n\in \N}\sup_{0\leq t\leq T}\norm{C_\infty(X_\infty(t)) -
        C_n(X_\infty(t))}{\HS(U;E_{\alpha})}^q}\\
                 &\leq
                   4^{q}\left(\sup_{n\in\bar\N}\norm{C_n}{\Lip(E_\alpha;\HS(U;E_{\alpha}))}^q\right)\left(1+\norm{X_\infty}{\sL^q_\FF}^q\right)<\infty. 
  \end{align*}
  Let 
  \[M:=\sup_{n\in \bar \N} \left(\snorm{B_n}{\Lip(E_\alpha;E)} +
      \snorm{C_n}{\Lip(E_\alpha;\HS(U;E_\alpha))}\right) <\infty.\]
  Then,~\eqref{eq:sol_map_continuity} holds true with $X_0:=
  X_{0,\infty}$, $B:= B_\infty$, $C:= C_\infty$, and $\widetilde X_0 :=
  X_{0,n}$, $\widetilde B =
  B_n$, $\widetilde C:= C_n$, for each finite $n\in \N$.
  Hence,~\eqref{eq:asconv} and dominated convergence theorem yield that 
  \begin{align*}
    &
                   \lim_{n\to\infty}\norm{X_\infty - X_n}{\sL^q_\FF}^q\\
                 &\leq K_{\alpha, p, T, S, M}^q  \lim_{n\to\infty} \norm{X_{0,\infty} -  X_{0,n}}{L^q(\Omega,\F_0,\PP; E_\alpha)}^q\\
                 &\quad + K_{\alpha, p, T, S, M}^q \lim_{n\to \infty}
                   \int_0^T \E{\sup_{0\leq t\leq s}\norm{B_\infty(X_\infty(t)) -
                   B_n(X_\infty(t))}{E}^q} (T-s)^{-\alpha}\d s \\
                 &\quad + K_{\alpha, p, T, S, M}^q \lim_{n\to\infty} 
                   \int_0^T \E{  \sup_{0\leq t\leq s}\norm{C_\infty(X_\infty(t)) -
                   C_n(X_\infty(t))}{\HS(U;E_\alpha)}^q} (T-s)^{-\alpha}\d s\\
                 &=0.\qedhere
  \end{align*}
\end{proof}
The following result is now a combination of the previous theorem with
localization of vector-valued stochastic processes. For details on the
localization we refer to~\cite[Section
3.3]{keller2018forward} and \cite[Section 2]{kunze2012continuous}.
\begin{thm}\label{thm:approx_loc}
  Assume the Setting~\ref{sec:sse_setting} and let $q\in
  (2,\infty)$, $\alpha \in [0,1)$,
  for $n\in \bar\N$ let $X_{0,n} \in L^q(\Omega,\F_0,\PP;E_\alpha)$, $B_n\in \LipLoc(E_{\alpha};E)$, $C_n \in
  \LipLoc(E_{\alpha};\HS(U;E_{\alpha}))$, such that for each $x\in E_{\alpha}$,
  \begin{equation}
    \label{eq:loc_conv}
    \begin{aligned}
    \lim_{n\to\infty}
    \norm{X_{0,\infty}-X_{0,n}}{L^q(\Omega;E_{\alpha})} &= 0, \\
    \lim_{n\to\infty} \norm{B_\infty(x) - B_n(x)}{E} &= 0,\\
    \lim_{n\to\infty} \norm{C_\infty(x) - C_n(x)}{\HS(U;E_\alpha)} &= 0,      
    \end{aligned}
  \end{equation}
  and for each $r\in (0,\infty)$, 
  \begin{equation}
    \label{eq:loc_bd}
    M_r :=\sup_{n\in \bar\N} \left(\snorm{B_n}{\Lip(E_{\alpha};E);r} +
      \snorm{C_n}{\Lip(E_{\alpha};\HS(U;E_{\alpha}));r}\right) <\infty.
  \end{equation}
  Then, the following holds true.
  \begin{enumerate}[label=(\roman*)]
  \item \label{i:existence_loc} There exist up to modifications unique
    maximal stopping times $\tau_n$ and unique continuous
    stochastic processes $X_n\colon \llbrak 0,\tau_n\llbrak \to
    E_\alpha$, such that on $\llbrak 0,\tau_n\llbrak$, 
    \begin{equation*}
      X_n(t) = S_t X_{0,n} + \int_0^t
        S_{t-s} B_n(X_n(s)) \d s
      + \int_0^t S_{t-s} C_n(X_n(s)) \d W_s,
    \end{equation*}
    and, in addition, it holds on $\{\tau<\infty\}$ almost surely,
    \begin{equation*}
      \lim_{t\nearrow \tau}\norm{X(t)}{E_{\alpha}} = \infty.
    \end{equation*}
  \item \label{i:conv_exittimes} For $r>0$, the exit times
    \begin{align*}
      \varsigma_n^{(r)} &:= \inf\{t\geq 0\,\colon
                          \norm{X_n(t)}{E_\alpha} \geq r\},\\
      \tau_n^{(r)}  &:= \inf\{t\geq 0\,\colon
                          \norm{X_n(t)}{E_\alpha} > r\},
    \end{align*}
    satisfy for each $r$, $\epsilon \in (0,\infty)$, that almost surely
    \begin{equation*}
      \liminf_{n\to\infty} \tau_n^{(r)} \leq \tau_{\infty}^{(r)} \leq
      \varsigma_\infty^{(r+\epsilon)}\leq \limsup_{n\to\infty} \varsigma_{n}^{(r+\epsilon)},
    \end{equation*}
    and, in particular
    \begin{equation*}
      \lim_{\Q\ni r\to\infty} \liminf_{n\to\infty} \tau_{n}^{(r)}\leq
      \tau_\infty \leq \lim_{\Q\ni r\to\infty} \limsup_{n\to\infty} \varsigma_{n}^{(r)}.
    \end{equation*}
  \item \label{i:continuity_loc} For each $r$, $\epsilon \in (0,\infty)$, it holds that
    \begin{equation}
      \label{eq:continuity_Lp_loc}
      \lim_{n\to\infty}\E{\sup_{0\leq t\leq T}\norm{X_\infty(t\wedge
          \tau_\infty^{(r)}) - X_n(t\wedge \tau_\infty^{(r)} \wedge \varsigma_n^{(r+\epsilon)})}{E_\alpha}^q} = 0,
    \end{equation}
    and for all $t\in [0,T]$, in probability
    \begin{equation}
      \label{eq:continuity_P_loc}
      \lim_{n\to\infty} X_{n}(t)\1_{\llbrak 0,\tau_n \wedge
        \tau_\infty\llbrak}(t;\cdot) = X_\infty(t) \1_{\llbrak 0,\tau_\infty\llbrak}(t;\cdot).
    \end{equation}
  \item \label{i:tau=sigma}
    If for some $r>0$ it holds that
    \begin{equation*}
      \P{\tau_\infty^{(r)} = \varsigma_\infty^{(r)}} = 1,
    \end{equation*}
    then it holds in probability that $\lim_{n\to\infty}\tau_n^{(r)}
    =\tau_\infty^{(r)}$ and, 
    \begin{equation*}
      \lim_{n\to\infty} \sup_{0\leq t\leq T}\norm{X_n(t \wedge \tau_n^{(r)})-  X_\infty(t\wedge \tau^{(r)}_\infty)}{E_{\alpha}} = 0.
    \end{equation*}
  \item \label{i:globalex}
    If, in addition, it holds that for each $n\in \bar \N$,
    \begin{equation*}
      \sup_{x\in E_{\alpha}} \frac{\norm{B_n(x)}{E} +
        \norm{C_n(x)}{\HS(U;E_{\alpha})}}{1+\norm{x}{E_\alpha}} <\infty,
    \end{equation*}
    then, $\tau_n = \infty$ for all $n\in \bar \N$ almost surely and, in probability,
    \begin{equation*}
      \lim_{n\to\infty} \sup_{0\leq t\leq T}\norm{X_n(t) - X_\infty(t)}{E_\alpha} = 0.
    \end{equation*}
  \item \label{i:globalex_unif}
   If, in addition, it holds that
    $\sup_{n\in\N}\E{\norm{X_{n,0}}{E_\alpha}^q}<\infty$ and
    \begin{equation*}
      \sup_{n\in\N}\sup_{x\in E_{\alpha}} \frac{\norm{B_n(x)}{E} +
        \norm{C_n(x)}{\HS(U;E_{\alpha})}}{1+\norm{x}{E_\alpha}} <\infty,
    \end{equation*}
    then, $\tau_n = \infty$ for all $n\in \bar \N$ almost surely, it
    holds that
    \begin{equation*}
      \sup_{n\in \N} \E{\sup_{0\leq t\leq T}
        \norm{X_n(t)}{E_{\alpha}}^q} <\infty,
    \end{equation*}
    and, moreover, for all $p\in [1,q)$,
    \begin{equation*}
      \lim_{n\to\infty} \E{\sup_{0\leq t\leq T} \norm{X_\infty(t) -
          X_n(t)}{E_{\alpha}}^p} = 0.
    \end{equation*}
  \end{enumerate}
\end{thm}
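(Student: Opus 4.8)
The overall plan is to reduce everything to the globally Lipschitz situation of Theorem~\ref{thm:approx} by truncation, and then to transfer convergence of the truncated global solutions to the localized statements through a careful analysis of exit times. Since $E_\alpha$ is a Hilbert space, the metric projection $\pi_r$ onto the closed ball $\{x : \norm{x}{E_\alpha}\le r\}$ is nonexpansive, so for each $r>0$ and $n\in\bar\N$ the truncations $B_n^{(r)} := B_n\circ\pi_r$ and $C_n^{(r)} := C_n\circ\pi_r$ belong to $\LipBd(E_\alpha;E)$ and $\LipBd(E_\alpha;\HS(U;E_\alpha))$, have $\Lip$-seminorms bounded by $M_r$, share the (linear-growth) constants of $B_n,C_n$, and agree with $B_n,C_n$ on the ball of radius $r$. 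Theorem~\ref{thm:approx}\ref{i:existence} yields unique global solutions $X_n^{(r)} := \sS(X_{0,n},B_n^{(r)},C_n^{(r)})$, and the hypotheses \eqref{eq:loc_conv}--\eqref{eq:loc_bd} are precisely those needed to apply Theorem~\ref{thm:approx}\ref{i:continuity_strong} to $(B_n^{(r)},C_n^{(r)})_{n\in\bar\N}$ for each fixed $r$, giving $X_n^{(r)}\to X_\infty^{(r)}$ in $\sL^q_{\FF}(E_\alpha)$; in particular $\sup_{[0,T]}\norm{X_n^{(r)}-X_\infty^{(r)}}{E_\alpha}\to 0$ in probability, hence almost surely along a subsequence. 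For \ref{i:existence_loc} I would patch these by uniqueness: for $r<r'$ the solutions agree up to the first exit of the ball of radius $r$, so they combine into a maximal continuous process $X_n$ on $\llbrak 0,\tau_n\llbrak$ with $\tau_n := \sup_r\varsigma_n^{(r)}$, and the blow-up alternative holds because a finite $\tau_n$ with bounded norm would coincide with a global truncated solution, contradicting maximality. This is the localization of \cite[Section 3.3]{keller2018forward} and \cite[Section 2]{kunze2012continuous}, which I would invoke.

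The crux is \ref{i:conv_exittimes}. Fix a truncation level $R>r+\epsilon$; below $R$ the truncated exit times coincide with the genuine $\tau_n^{(r)},\varsigma_n^{(r+\epsilon)}$, so these are functionals of the global processes $X_n^{(R)}$, which converge uniformly on $[0,T]$ almost surely along a subsequence. The pathwise facts under uniform convergence $Y_n\to Y$ are that the upper exit time $Y\mapsto\inf\{t:\norm{Y(t)}{E_\alpha}>r\}$ is lower semicontinuous, giving $\limsup_n\tau_n^{(r)}\le\tau_\infty^{(r)}$, while the lower exit time $Y\mapsto\inf\{t:\norm{Y(t)}{E_\alpha}\ge r+\epsilon\}$ is upper semicontinuous, giving $\liminf_n\varsigma_n^{(r+\epsilon)}\ge\varsigma_\infty^{(r+\epsilon)}$; the middle inequality $\tau_\infty^{(r)}\le\varsigma_\infty^{(r+\epsilon)}$ is immediate from $\{\norm{X_\infty}{E_\alpha}\ge r+\epsilon\}\subseteq\{\norm{X_\infty}{E_\alpha}>r\}$. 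To pass from these subsequential almost sure statements to the full-sequence inequalities claimed, I use that the $\liminf$ over the full sequence is bounded above by the $\liminf$ along any subsequence, and dually for $\limsup$; this is exactly why \ref{i:conv_exittimes} is formulated with $\liminf$ on the left and $\limsup$ on the right. The final displayed consequence follows by letting $r\to\infty$ along $\Q$.

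For \ref{i:continuity_loc} I again fix $R>r+\epsilon$ and apply Theorem~\ref{thm:approx}\ref{i:continuity_strong}. Since the norms stay below $R$ up to the relevant times, $X_\infty(\cdot\wedge\tau_\infty^{(r)})=X_\infty^{(R)}(\cdot\wedge\tau_\infty^{(r)})$ and $X_n(\cdot\wedge\varsigma_n^{(r+\epsilon)})=X_n^{(R)}(\cdot\wedge\varsigma_n^{(r+\epsilon)})$; the discrepancy between the stopping times $\tau_\infty^{(r)}$ and $\tau_\infty^{(r)}\wedge\varsigma_n^{(r+\epsilon)}$ lives on $\{\varsigma_n^{(r+\epsilon)}<\tau_\infty^{(r)}\}$, where $\sup_{[0,T]}\norm{X_n^{(R)}-X_\infty^{(R)}}{E_\alpha}\ge\epsilon$, so this event has probability tending to $0$ and, the integrand being bounded there by $(2r+\epsilon)^q$, contributes nothing in the limit; this gives \eqref{eq:continuity_Lp_loc}, and \eqref{eq:continuity_P_loc} follows by sending $r\to\infty$ and using \ref{i:conv_exittimes} to control the indicator sets. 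For \ref{i:tau=sigma} I argue by the subsequence principle: along any subsequence with $X_{n_k}^{(R)}\to X_\infty^{(R)}$ uniformly almost surely, the pathwise semicontinuities combine with $\varsigma_n^{(r)}\le\tau_n^{(r)}$ and the hypothesis $\tau_\infty^{(r)}=\varsigma_\infty^{(r)}$ into the sandwich $\tau_\infty^{(r)}=\varsigma_\infty^{(r)}\le\liminf_k\varsigma_{n_k}^{(r)}\le\liminf_k\tau_{n_k}^{(r)}\le\limsup_k\tau_{n_k}^{(r)}\le\tau_\infty^{(r)}$, so $\tau_{n_k}^{(r)}\to\tau_\infty^{(r)}$ almost surely; hence $\tau_n^{(r)}\to\tau_\infty^{(r)}$ in probability, and the stopped processes converge by \eqref{eq:continuity_Lp_loc} as $\epsilon\to 0$.

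Parts \ref{i:globalex} and \ref{i:globalex_unif} rest on a priori moment bounds. Under the (uniform) linear-growth hypothesis I would derive $\E{\sup_{t\le T}\norm{X_n^{(R)}(t\wedge\varsigma_n^{(R)})}{E_\alpha}^q}\le C_T$, uniformly in $R$ (and, for \ref{i:globalex_unif}, also in $n$, using $\sup_n\E{\norm{X_{0,n}}{E_\alpha}^q}<\infty$), by the now-standard chain of Lemma~\ref{lem:StHa}, the Burkholder-type inequality for stochastic convolutions, Lemma~\ref{lem:sup_conv} and the extended Gronwall Lemma~\ref{lem:gronwall}; the growth constants are preserved by truncation since $\norm{\pi_R x}{E_\alpha}\le\norm{x}{E_\alpha}$. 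A Chebyshev bound $\P{\varsigma_n^{(R)}\le T}\le C_T R^{-q}\to 0$ then forces $\tau_n=\infty$ almost surely, and global convergence in probability follows from \ref{i:continuity_loc} on letting $r\to\infty$ with $\P{\varsigma_\infty^{(r)}\le T}\to 0$. For \ref{i:globalex_unif} the uniform $L^q$ bound furnishes uniform integrability, so convergence in probability upgrades to $L^p$-convergence for every $p<q$. The main obstacle throughout is \ref{i:conv_exittimes}: exit times are only semicontinuous in the path, so one must carefully separate the first-reaching time $\varsigma$ from the first-strict-exceedance time $\tau$, carry the $\epsilon$-cushion through the estimates, and exploit the precise $\liminf/\limsup$ formulation so that the inequalities survive the passage from $\sL^q_{\FF}$-convergence through almost surely convergent subsequences.
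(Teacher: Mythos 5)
Your proposal is correct and follows the same two-step strategy as the paper: truncate so as to land in the globally Lipschitz setting of Theorem~\ref{thm:approx}, apply part~\ref{i:continuity_strong} there for each fixed truncation level, and then relax the truncation through the exit times, using the $(2r+\epsilon)^q$ bound and dominated convergence for \eqref{eq:continuity_Lp_loc} and a Gronwall-based a priori bound, uniform in the truncation level, plus uniform integrability for \ref{i:globalex} and \ref{i:globalex_unif}. Two points differ in execution. First, you truncate by composing with the metric projection onto the ball, whereas the paper multiplies by a smooth radial cutoff $h_r(\norm{\cdot}{E_\alpha}^2)$ and invokes \cite[Lemmas~3.28--3.29]{keller2018forward} to control the resulting Lipschitz seminorms; your variant is slightly cleaner, since nonexpansiveness of the projection gives $\snorm{B_n\circ\pi_r}{\Lip(E_\alpha;E)}\le\snorm{B_n}{\Lip(E_\alpha;E);r}$ with no extra term involving $\sup_{\norm{x}{E_\alpha}\le r+1}\norm{B_n(x)}{E}$, and both truncations agree with the original coefficients on the $r$-ball, which is all the uniqueness/patching step needs. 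Second, for items \ref{i:conv_exittimes}--\ref{i:tau=sigma} the paper simply cites \cite[Propositions~3.23 and~3.26]{keller2018forward} and \cite[Theorem~2.1]{kunze2012continuous}, while you prove the exit-time comparisons directly via pathwise semicontinuity along an a.s.\ convergent subsequence; this is essentially the content of the cited results and makes the argument self-contained. One small slip: your semicontinuity labels are reversed --- $Y\mapsto\inf\{t:\norm{Y(t)}{E_\alpha}>r\}$ is \emph{upper} semicontinuous under uniform convergence (yielding $\limsup_n\tau_n^{(r)}\le\tau_\infty^{(r)}$, which is stronger than the stated $\liminf$ bound) and $Y\mapsto\inf\{t:\norm{Y(t)}{E_\alpha}\ge r\}$ is \emph{lower} semicontinuous --- but the inequality directions you actually draw are the correct ones, so nothing breaks.
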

\begin{proof}
  Item~\ref{i:existence_loc} follows for each $n\in\bar\N$ from \cite[Theorem
  3.17]{keller2016stefan} with $E_1$ replaced by $E_\alpha$, cf. Remark~\ref{rmk:interpol:reiteration}. Now, for each $r\in(0,\infty)$ let $h_r\in
  C^\infty([0,\infty))$ be monotonously decreasing functions and such that 
  \begin{equation}
    \label{eq:31}
    c_h := \sup_{r\in [0,\infty)} \norm{h_r'}{\infty} <\infty, 
  \end{equation}
  and $h_r(x) = 1$, for $x \in [0,r^2]$, and $h_r(x) = 0$, for $x\in
  [(r+1)^2,\infty)$. Then define for $r\in(0,\infty)$, $n\in \bar \N$,
  \begin{equation}
    \label{eq:truncation}
    B_{n}^{(r)} := h_r(\norm{\cdot}{E_\alpha}^2) B_n,\qquad C_n^{(r)} :=
    h_r(\norm{\cdot}{E_{\alpha}}^2) C_n.
  \end{equation}
  From \cite[Lemma 3.28 and Lemma 3.29]{keller2018forward} we derive
  that $B_n^{(r)}\in \LipBd(E_{\alpha}; E)$ and $C_{n}^{(r)}\in
  \LipBd(E_{\alpha}; \HS(U;E_{\alpha}))$. We moreover get from these
  lemmas that
  \begin{equation*}
    \begin{aligned}
      \snorm{B^{(r)}_n}{\Lip(E_{\alpha},E)} &\leq
      \snorm{B_n}{\Lip(E_{\alpha};E),r+1} + 2c_h(r+1) \sup_{\substack{x\in {E_{\alpha}}\\ \norm{x}{E_\alpha}\leq r+1}}\norm{B_n(x)}{E} \\
      &\leq 
      \snorm{B_n}{\Lip(E_{\alpha};E),r+1} + 2c_h(r+1)
      (\norm{B_n(0)}{E} + \snorm{B_n}{\Lip(E_\alpha;E);r+1}),
    \end{aligned}
  \end{equation*}
  and the same estimate for $C_n$, which yield with~\eqref{eq:loc_conv} for $x=0$ and \eqref{eq:loc_bd} that
  \begin{equation*}
    \sup_{n\in \bar \N} \snorm{B^{(r)}_n}{\Lip(E_{\alpha};E)} +
    \snorm{C^{(r)}_n}{\Lip(E_{\alpha};\HS(U;E_{\alpha}))} < \infty.
  \end{equation*}
  Let $\sS$ be as defined in
  Theorem~\ref{thm:approx} and set $X^{(r)}_n := \sS(X_0, B_n^{(r)}, C_n^{(r)})$ for $r\in (0,\infty)$
  and $n\in\bar \N$. We have shown that the assumptions of
  Theorem~\ref{thm:approx}.\ref{i:continuity_strong} are satisfied for
  each $r\in(0,\infty)$ and thus 
  \[ \lim_{n\to\infty}\E{\sup_{0\leq t\leq T}\norm{X^{(r)}_\infty(t)-X^{(r)}_n(t)}{E_{\alpha}}^q} = 0.\]
  It remains to relax the truncation. Note that $B_n^{(r)} = B_n$ and $C_n^{(r)} = C_n$ on $\{x\in
  E_{\alpha} \colon \norm{x}{E_{\alpha}} \leq r\}$, and thus, from the
  uniqueness in~\ref{i:existence_loc} we get that 
  \[ X_n^{(r)} = X_n \quad \text{on }\llbrak 0,\tau^{(r)}_n\rrbrak.\] 
  Thus, \cite[Proposition 3.23]{keller2018forward} (with $V:=E_\alpha$, $Y_n:=X_n$), see also
  \cite[Theorem 2.1]{kunze2012continuous}, yields
  \ref{i:conv_exittimes} and, moreover, that for each
  $\epsilon >0$ in probability
  \begin{equation}
    \label{eq:conv_ucp}
    \lim_{n\to\infty} \sup_{0\leq t\leq T}\norm{X_\infty(t\wedge
      \tau_\infty^{(r)}) - X_n(t\wedge \varsigma_n^{(r+\epsilon)}\wedge
      \tau_\infty^{(r)})}{E_\alpha}=0.
  \end{equation}
  In addition,
  \begin{equation*}
    \E{\sup_{n\in \N}\sup_{0\leq t\leq T}\norm{X_\infty(t\wedge
        \tau_\infty^{(r)}) - X_n(t\wedge \varsigma_n^{(r+\epsilon)}\wedge
        \tau_\infty^{(r)})}{E_\alpha}^q} \leq (2r+\epsilon)^q,
  \end{equation*}
  so that by dominated convergence we also
  get~\eqref{eq:continuity_Lp_loc}. Moreover,~\eqref{eq:continuity_P_loc}
  follows now by \cite[Theoerem
  2.1.(3)]{kunze2012continuous}. Item~\ref{i:tau=sigma} follows from
  \cite[Proposition 3.26]{keller2018forward}. 
  Item~\ref{i:globalex} follows from \cite[Corollary
  2.5]{kunze2012continuous}, since, in fact, $\tau_n=\infty$ almost
  surely by linear growth of the coefficients, see \cite[Corollary 3.20]{keller2016stefan}.

  To prove~\ref{i:globalex_unif}, first note that $Y := \sS(0,0,0)$
  satisfies $Y(t)=0$ for all $t\in [0,\infty)$. Write
  \begin{equation}
    \label{eq:38}
    M_{B,C}:=  \sup_{n\in\N}\sup_{x\in E_{\alpha}} \frac{\norm{B_n(x)}{E} +
      \norm{C_n(x)}{\HS(U;E_{\alpha})}}{1+\norm{x}{E_\alpha}}.
  \end{equation}
  For each $r>0$ and $n\in\bar \N$ apply~\eqref{eq:sol_map_continuity}
  to $B:= B_n^{(r)}$, $C:= C_n^{(r)}$ and $\widetilde B:= 0$,
  $\widetilde C:=0$, which yields (with $M=0$), 
  \begin{equation*}
    \begin{aligned}
      \hspace{1em}&\hspace{-1em}
      \norm{X_n^{(r)}}{\sL^q_\FF(E_\alpha)}^q \\
      &\leq K_{\alpha, p, T, S, 0}^q  \norm{X_{n,0}}{L^q(\Omega,\F_0,\PP; E_\alpha)}^q\\
      &\quad + K_{\alpha, p, T, S, 0}^q 
      \int_0^T \E{\sup_{0\leq t\leq s}\norm{B_n^{(r)}(X_n^{(r)}(t))}{E}^q} (T-s)^{-\alpha}\d s \\
      &\quad + K_{\alpha, p, T, S, 0}^q 
      \int_0^T \E{\sup_{0\leq t\leq s}\norm{C_n^{(r)}(X_n^{(r)}(t))}{\HS(U;E_\alpha)}^q} (T-s)^{-\alpha}\d
      s\\
      &\leq K_{\alpha, p, T, S, 0}^q  \norm{X_{n,0}}{L^q(\Omega,\F_0,\PP; E_\alpha)}^q\\
      &\quad + 2^{q} K_{\alpha, p, T, S, 0}^q M_{B,C}^q \left(T^{1-\alpha}/(1-\alpha) + 
        \int_0^T \E{  \sup_{0\leq t\leq s}\norm{X_n^{(r)}(t)}{E_{\alpha}}^q} (T-s)^{-\alpha}\d s\right),         
    \end{aligned}
  \end{equation*}
  so that Grownall's Lemma~\ref{lem:gronwall} yields
  \begin{equation*}
    \sup_{n\in \bar \N} \sup_{r>0}
    \frac{\norm{X_n^{(r)}}{\sL^q_\FF(E_\alpha)}}{1+\norm{X_{0,n}}{L^q(\Omega;E_{\alpha})}}
    <\infty.
  \end{equation*}
  Finally, since $\tau_n = \infty$ almost surely for all $n\in\bar \N$, this yields with Fatou's lemma
  \begin{equation*}
    \sup_{n\in \bar \N} \E{\sup_{0\leq t\leq
        T}\norm{X_n(t)}{E_\alpha}^q}\leq \sup_{n\in \bar \N}
    \liminf_{r\to \infty} \norm{X_n^{(r)}}{\sL^q_\FF(E_\alpha)}^q<\infty.
  \end{equation*}
  Finally, the convergence in probability in~\ref{i:globalex} and uniform boundedness in $\sL^q_\FF$ yield
  convergence in $\sL^p_\FF$, for $p\in [1,q)$. This finishes the
  proof of~\ref{i:globalex_unif}.
\end{proof}


\section{Approximation of the fixed boundary problems}
\label{sec:proofs}
Throughout this section we use the notation and setup from
Section~\ref{sec:main}. As in \cite{keller2016stefan}, we rewrite the
systems of SPDEs~\eqref{eq:spde} together with the respective interface
dynamics as stochastic evolution equations on the spaces
\[ \L^2:= L^2(\R_+)\oplus L^2(\R_+) \oplus \R,\quad  \H^\alpha :=
  H^{\alpha}(\R_+) \oplus H^{\alpha}(\R_+) \oplus \R,\]
for $\alpha\in [0,\infty)$ and 
\[\H^2_D :=
  H^{2}_D(\R_+) \oplus H_D^{\alpha}(\R_+) \oplus \R,\]
where 
\[H^{2}_D(\R_+) := H^2(\R_+) \cap H^1_0(\R_+).\]
Let $\Delta_D\colon H^2_D(\R_+) \subset L^2(\R_+)\to L^2(\R_+)$ be the
Dirichlet Laplacian and let $A\colon \H^2_D \subset \L^2 \to \L^2$ be the linear operator
defined by
\begin{align*}
  A &:=
      \begin{pmatrix}
        \eta_+ \Delta_D &0&0\\
        0&\eta_-\Delta_D&0\\
        0&0&0
      \end{pmatrix} - \Id_{\L^2},
\end{align*}
and for $u=(u_1,u_2,p)\in \H^{2}$, define
\begin{align*}
  \begin{gathered}[keller]
    N_\mu(u) := 
    \begin{pmatrix}
      \mu_1(\cdot, u_1(\cdot), \ddx u_1(\cdot))\\
      \mu_2(\cdot, u_2(\cdot), \ddx u_2(\cdot))\\
      0
    \end{pmatrix}, \qquad  \bnabla u :=
    \begin{pmatrix}
      \ddx u_1\\ -\ddx u_2 \\ 1
    \end{pmatrix}  
    \\
    \Psi_n(u) := \varrho\left(2n^2\int_0^{1/n} u_1(y) \d y, -2n^2
      \int_0^{1/n} u_2(y) \d y\right),  \quad n\in \N,\\
    \Psi_\infty(u) := \varrho\left(\ddx u_1(0), -\ddx u_2(0)\right),
  \end{gathered}
\end{align*}
for $n\in \bar \N$ let $B_n := N_\mu + \Psi_n(\cdot) \bnabla(\cdot)
+\Id_{\L^2}$ and for $u=(u_1,u_2,p)\in \L^2$ and $w\in U:= L^2(\R)$,
\begin{equation*}
  (C(u)w) =
  \begin{pmatrix}
    \sigma_1(\cdot,u_1(\cdot)) (T_\zeta w)(p+\cdot)\\
    \sigma_2(\cdot,u_2(\cdot)) (T_\zeta w)(p-\cdot)\\
    0
  \end{pmatrix}.
\end{equation*}

We keep the following result on $A$, which is proven in for instance in~\cite[Lemma
4.1 and Lemma 4.2]{keller2016stefan}.
\begin{lem}\label{lem:A}
  The linear operator $A$ on $\L^2$, with $\dom(A):= \H^2_D$ is negative self-adjoint and, in
  particular, generates a strongly continuous analytic semigroup of
  contractions. Moreover, up to equivalence of norms for $\alpha \in
  [0,1/4)$,
  \begin{equation*}
    \dom((-A)^{\alpha}) = \H^{2\alpha}.
  \end{equation*}
\end{lem}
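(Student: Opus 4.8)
The plan is to exploit the block-diagonal structure of $A$ and reduce everything to the scalar analysis of the Dirichlet Laplacian on $L^2(\R_+)$. Write $A = A_+ \oplus A_- \oplus A_0$, where $A_\pm := \eta_\pm \Delta_D - \Id$ acts on $L^2(\R_+)$ and $A_0 := -1$ on $\R$. Since the three summands act on mutually orthogonal components, self-adjointness, negativity, the semigroup, the fractional powers $(-A)^\alpha = (-A_+)^\alpha \oplus (-A_-)^\alpha \oplus 1$, and their domains all split along this decomposition; it therefore suffices to treat each block separately and reassemble.

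For the first assertion I would argue as follows. The Dirichlet Laplacian $\Delta_D$ is negative self-adjoint on $L^2(\R_+)$ with $-\Delta_D \geq 0$; multiplying by $\eta_\pm > 0$ and subtracting $\Id$ preserves self-adjointness and yields $-A_\pm = \Id - \eta_\pm \Delta_D \geq \Id$. Together with $-A_0 = 1$, the operator $-A$ is self-adjoint with $-A \geq \Id$, so $\sigma(A) \subseteq (-\infty,-1]$. By the spectral theorem a self-adjoint operator bounded above by $-\Id$ generates an analytic $C_0$-semigroup with $\|S_t\|_{L(\L^2)} = \|e^{tA}\| \leq e^{-t} \leq 1$, i.e.\ a contraction semigroup of negative type, which is the first claim.

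For the fractional-power domains, fix $\alpha \in [0,1/4)$. I would first remove the coefficient $\eta_\pm$ and the shift: since $\min(1,\eta_\pm)(\Id - \Delta_D) \leq \Id - \eta_\pm\Delta_D \leq \max(1,\eta_\pm)(\Id - \Delta_D)$, the L\"owner--Heinz inequality (operator monotonicity of $t\mapsto t^\alpha$ for $\alpha \in [0,1]$) gives constants $c_1,c_2>0$ with $c_1 (\Id - \Delta_D)^\alpha \leq (-A_\pm)^\alpha \leq c_2 (\Id - \Delta_D)^\alpha$, hence $\dom((-A_\pm)^\alpha) = \dom((\Id - \Delta_D)^\alpha)$ with equivalent norms. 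Because $\Id - \Delta_D$ is positive self-adjoint, its fractional-power domain coincides with the complex interpolation space, $\dom((\Id - \Delta_D)^\alpha) = [L^2(\R_+), H^2_D(\R_+)]_\alpha$, up to equivalent norms. I would then invoke the classical interpolation characterization of Sobolev spaces under Dirichlet conditions (e.g.\ \cite{lunardi2009interpolation}): since $2\alpha < 1/2$, the trace at $0$ is not bounded on $H^{2\alpha}(\R_+)$, so the boundary condition $u(0)=0$ encoded in $H^2_D$ is invisible under interpolation and $[L^2(\R_+), H^2_D(\R_+)]_\alpha = H^{2\alpha}(\R_+)$. Reassembling the blocks yields $\dom((-A)^\alpha) = H^{2\alpha}(\R_+) \oplus H^{2\alpha}(\R_+) \oplus \R = \H^{2\alpha}$, with equivalent norms.

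The main obstacle is this last identification: pinning down why the Dirichlet condition disappears precisely for $2\alpha < 1/2$. This is exactly the trace threshold, and the restriction $\alpha < 1/4$ is forced by it. Rather than re-derive it, one should carefully cite the interpolation/fractional-power result for $\Delta_D$ on the half-line (Seeley, Grisvard, Lions--Magenes, or the version in \cite{lunardi2009interpolation}), since at and above $2\alpha = 1/2$ the interpolation space acquires the boundary condition (the Lions--Magenes space at the endpoint), and the clean identity with $\H^{2\alpha}$ would no longer hold.
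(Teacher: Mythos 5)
Your argument is correct, but there is nothing in the paper to compare it against: the lemma is not proved here at all, it is imported verbatim from \cite[Lemmas 4.1 and 4.2]{keller2016stefan} (see the sentence immediately preceding the statement). What you have written is a legitimate self-contained substitute and follows the route one would expect: block-diagonalize $A$, observe that $-A\geq \Id_{\L^2}$ is self-adjoint so the spectral theorem yields an analytic $C_0$-semigroup of contractions of negative type, and identify $\dom((-A)^{\alpha})$ blockwise by interpolation. Two small remarks. First, the L\"owner--Heinz step as you state it has a half-step gap: the inequality $c_1(\Id-\Delta_D)^{\alpha}\leq(-A_\pm)^{\alpha}\leq c_2(\Id-\Delta_D)^{\alpha}$ is an inequality of quadratic forms and therefore identifies the domains of the \emph{square roots} of these powers, so to conclude $\dom((-A_\pm)^{\alpha})=\dom((\Id-\Delta_D)^{\alpha})$ you should apply operator monotonicity with exponent $2\alpha$ (admissible since $2\alpha<1/2\le 1$); alternatively, and more simply, $\Id-\eta_\pm\Delta_D$ and $\Id-\Delta_D$ commute, and $\lambda\mapsto\bigl((1+\eta_\pm\lambda)/(1+\lambda)\bigr)^{\alpha}$ is bounded above and below on $[0,\infty)$, so the domains coincide by spectral calculus with no monotonicity argument at all. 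Second, you correctly isolate the real content, namely that fractional-power domains of a positive self-adjoint operator are complex interpolation spaces and that $[L^2(\R_+),H^2(\R_+)\cap H^1_0(\R_+)]_{\alpha}=H^{2\alpha}(\R_+)$ precisely because $2\alpha<1/2$ lies below the trace threshold; this is exactly where the restriction $\alpha<1/4$ enters, and citing \cite{lunardi2009interpolation} (or Grisvard/Lions--Magenes) for it is appropriate. In short: no gap of substance, just a proof where the paper has only a citation.
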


We stress that $\dom(A)$ is a closed subset of $\H^2$ and the norms
$\norm{\cdot}{A}:= \norm{A(\cdot)}{\L^2}$ and $\norm{(\cdot)}{\H^2}$ are
equivalent. In the following We will use the constant
\begin{equation*}
  K_A := \sup_{\substack{u\in \dom(A),\\ u\neq 0}}
  \frac{\norm{u}{\H^2}}{\norm{u}{A}} <\infty.
\end{equation*}

\subsection{Existence and approximation results}

Recall the following from \cite{keller2016stefan}.
\begin{lem}\label{lem:std}
  Assume that the Assumptions~\ref{ass:mu}, \ref{ass:sigma}, 
  and \ref{ass:zeta} hold true and let $\alpha \in (0, 1/4)$. Then,
  \begin{enumerate}[label=(\roman*)]
  \item\label{i:std:Nmu} $N_\mu \colon \dom(A) \to \H^{2\alpha}$ is Lipschitz
    continuous on bounded sets,
  \item\label{i:std:nabla} $\bnabla\colon \dom(A) \to \H^{2\alpha}$ is Lipschitz
    continuous,
  \item\label{i:std:C} $C \colon \dom(A) \to \HS(U;\dom(A))$ is Lipschitz continuous
    on bounded sets.
  \end{enumerate}
\end{lem}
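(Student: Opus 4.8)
The plan is to work throughout with the concrete Sobolev realisations of the interpolation spaces. By Lemma~\ref{lem:A}, $\dom((-A)^{\alpha}) = \H^{2\alpha}$ with equivalent norms for $\alpha\in[0,1/4)$, while $\dom(A) = \H^2_D$ carries a norm equivalent to $\norm{\cdot}{\H^2}$. Since $2\alpha < 1/2 < 1$, the embedding $H^1(\R_+)\hookrightarrow H^{2\alpha}(\R_+)$ is continuous, so for~\ref{i:std:Nmu} and~\ref{i:std:nabla} it suffices to prove the stronger statements with target $\H^1$. The other recurring tool is the Sobolev embedding $H^2(\R_+)\hookrightarrow C^1_b(\R_+)$, which bounds the pointwise values of $u_i$ and $\ddx u_i$ by $\norm{u}{\H^2}$; on any ball $\{\norm{u}{\H^2}\le R\}$ this forces all arguments of $\mu_\pm$ and $\sigma_\pm$ into a fixed compact set, turning the local growth and Lipschitz data of Assumptions~\ref{ass:mu} and~\ref{ass:sigma} into uniform constants $C_R$.

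Item~\ref{i:std:nabla} will be immediate: $\bnabla$ is affine, and its linear part $u\mapsto(\ddx u_1,-\ddx u_2,0)$ factors through $\partial_x\colon H^2(\R_+)\to H^1(\R_+)$ followed by $H^1\hookrightarrow H^{2\alpha}$, hence is bounded and thus globally Lipschitz. For~\ref{i:std:Nmu}, each scalar component of $N_\mu$ is a Nemytskii operator $u_i\mapsto\mu_\pm(\cdot,u_i,\ddx u_i)$, whose weak derivative I would compute by the chain rule,
\[
  \ddx\bigl[\mu_\pm(\cdot,u_i,\ddx u_i)\bigr] = (\ddx\mu_\pm) + (\ddy\mu_\pm)\,\ddx u_i + (\ddz\mu_\pm)\,\dxx u_i,
\]
with all partials evaluated at $(\cdot,u_i,\ddx u_i)$, and then check $L^2$-membership term by term. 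The only factor that is merely $L^2$ (not $L^\infty$) is $\dxx u_i$, which is paired with the uniformly bounded coefficient $\ddz\mu_\pm$ by Assumption~\ref{ass:mu}.\ref{ai:mugrowths}; the remaining terms are controlled by $a\in L^2(\R)$ and the bounded $b,\tilde b$. The Lipschitz bound on $\{\norm{u}{\H^2}\le R\}$ then follows by subtracting the two expansions and estimating each difference via the local Lipschitz continuity of $\mu_\pm$ and of its first partials (Assumption~\ref{ass:mu}.\ref{ai:mulip}) together with the uniform $C^1_b$-bounds, using the bilinear estimates of an $L^\infty$-difference against an $L^2$-factor and of an $L^\infty$-factor against an $L^2$-difference.

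For~\ref{i:std:C} I would first check that the range lies in $\dom(A)=\H^2_D$, i.e.\ that the Dirichlet condition holds: the first component of $C(u)w$ at $x=0$ equals $\sigma_+(0,u_1(0))\,(T_\zeta w)(p)$, and this vanishes since $u_1(0)=0$ and $\sigma_+(0,0)=0$ by Assumption~\ref{ass:sigma}.\ref{ai:sigmabc}. To bound the Hilbert--Schmidt norm I would fix an orthonormal basis $(e_k)$ of $U=L^2(\R)$, expand each component $\sigma_\pm(\cdot,u_i)\,(T_\zeta e_k)(p\pm\cdot)$ and its $x$-derivatives up to order two by the Leibniz rule, and sum over $k$ using the Parseval identity
\[
  \sum_k \abs{\ddxn(T_\zeta e_k)(x)}^2 = \norm{\ddxn\zeta(x,\cdot)}{L^2(\R)}^2 ,
\]
which by Assumption~\ref{ass:zeta} is bounded uniformly in $x$ for $n\le 3$. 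This replaces the basis sum by uniform $L^\infty$ factors multiplying the coefficient functions $\sigma_\pm(\cdot,u_i)$ and their derivatives; finiteness of the resulting $x$-integral then follows from the $L^2$ growth bounds of Assumption~\ref{ass:sigma}.\ref{ai:sigmagrowths}, again with the factor $\dxx u_i$ paired against a uniformly bounded basis sum.

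The Lipschitz estimate for~\ref{i:std:C} is, I expect, the main obstacle, and I would split $C(u)-C(\tilde u)$ into two contributions. The first stems from the multiplier differences $\sigma_\pm(\cdot,u_i)-\sigma_\pm(\cdot,\tilde u_i)$ and their derivatives, handled as in~\ref{i:std:Nmu} through the local Lipschitz continuity of $\sigma_\pm$ and its partials (Assumption~\ref{ass:sigma}.\ref{ai:sigmalip}) on the fixed compact set. The second, more delicate contribution comes from the spatial shift $p\pm\cdot$ versus $\tilde p\pm\cdot$: here I would apply the mean value theorem in the shift variable and control the basis sum of the next derivative,
\[
  \sum_k \abs{(T_\zeta e_k)^{(j+1)}(z)}^2 = \norm{\tfrac{\partial^{j+1}}{\partial x^{j+1}}\zeta(z,\cdot)}{L^2(\R)}^2 ,
\]
uniformly in the evaluation point $z$, which is exactly why Assumption~\ref{ass:zeta} demands control up to the third $x$-derivative of $\zeta$. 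Combining the two contributions yields $\norm{C(u)-C(\tilde u)}{\HS(U;\dom(A))}\le C_R\norm{u-\tilde u}{\H^2}$ on bounded sets, completing the plan.
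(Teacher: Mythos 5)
Your plan is sound and would prove the lemma, but it takes a genuinely different route from the paper: the paper disposes of \ref{i:std:Nmu} and \ref{i:std:C} by citing \cite[Theorems 6.7 and 7.6]{keller2016stefan}, and only argues \ref{i:std:nabla} directly (exactly as you do, via the definition of the $\H^1$- and $\H^2$-norms and the embedding $\H^1\hookrightarrow\H^{2\alpha}$). What you have written is in effect a reconstruction of the content of those two cited theorems. The key ingredients are all correctly identified: the identification $\dom((-A)^\alpha)=\H^{2\alpha}$ for $\alpha<1/4$ from Lemma~\ref{lem:A}, the reduction to the stronger target $\H^1$, the embedding $H^2(\R_+)\hookrightarrow BUC^1([0,\infty))$ to confine the arguments of $\mu_\pm$ and $\sigma_\pm$ to a compact set on each ball (which is what converts the \emph{local} bounds of Assumptions~\ref{ass:mu} and~\ref{ass:sigma} into uniform constants, and where the $x$-uniformity of the Lipschitz constants in Assumptions~\ref{ass:mu}.\ref{ai:mulip} and~\ref{ass:sigma}.\ref{ai:sigmalip} is essential), the pairing of the lone $L^2$-factor $\dxx u_i$ with the bounded coefficient $\ddz\mu_\pm$, the Parseval identity $\sum_k\abs{\ddxn (T_\zeta e_k)(x)}^2=\norm{\ddxn\zeta(x,\cdot)}{L^2(\R)}^2$ for the Hilbert--Schmidt norm, the verification of the Dirichlet condition via $\sigma_\pm(0,0)=0$, and the observation that the shift-Lipschitz estimate is what forces control of the third $x$-derivative of $\zeta$. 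The trade-off is the obvious one: the paper's citation buys brevity at the cost of self-containedness, while your argument is longer but makes visible exactly which hypothesis is used where.

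One small technical point in your treatment of the shift: applying the mean value theorem to each $(T_\zeta e_k)^{(j)}$ separately produces intermediate points $\xi_k$ depending on $k$, and $\sum_k\abs{(T_\zeta e_k)^{(j+1)}(\xi_k)}^2$ is then not a Parseval sum. The clean fix is to work with the $L^2(\R)$-valued map $z\mapsto\tfrac{\partial^j}{\partial x^j}\zeta(z,\cdot)$ and use the vector-valued fundamental theorem of calculus, giving
\begin{equation*}
  \sum_k\abs{(T_\zeta e_k)^{(j)}(p+x)-(T_\zeta e_k)^{(j)}(\tilde p+x)}^2
  =\norm{\tfrac{\partial^{j}}{\partial x^{j}}\zeta(p+x,\cdot)-\tfrac{\partial^{j}}{\partial x^{j}}\zeta(\tilde p+x,\cdot)}{L^2(\R)}^2
  \leq \abs{p-\tilde p}^2\sup_{z}\norm{\tfrac{\partial^{j+1}}{\partial x^{j+1}}\zeta(z,\cdot)}{L^2(\R)}^2,
\end{equation*}
before integrating in $x$. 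This is a routine repair, not a gap in the strategy.
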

\begin{proof}
  Item~\ref{i:std:Nmu} follows from \cite[Theorem 6.7]{keller2016stefan}, and \ref{i:std:C} from \cite[Theorem 7.6]{keller2016stefan}. Finally, \ref{i:std:nabla} is a direct consequence of the definition of the $\H^1$ and $\H^2$-norms, and continuity of the embedding $\H^1\hookrightarrow \H^{2\alpha}$. 
\end{proof}
We now discuss bounds for the interface coefficients.
\begin{lem}\label{lem:intnorm}
  Let $f\in H^2(\R_+)\cap H^1_0(\R_+)$ and $z\in [0,\infty)$. Then,
  \begin{equation*}
    \abs{\int_0^z f(y) \d y} \leq z^2 \norm{f}{H^2(\R_+)}.
  \end{equation*}
\end{lem}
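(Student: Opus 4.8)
The plan is to exploit the two pieces of structure in the hypothesis separately: the Dirichlet condition encoded in $f\in H^1_0(\R_+)$ gives $f(0)=0$, while the $H^2$-regularity gives a uniform pointwise bound on $f'$. Combining a linear-in-$y$ pointwise bound on $\abs{f(y)}$ with integration over $[0,z]$ will then produce the factor $z^2$, with room to spare in the constant.

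First I would record the one-dimensional Sobolev embedding on the half-line in the sharp form $\norm{g}{L^\infty(\R_+)}\leq \norm{g}{H^1(\R_+)}$ for $g\in H^1(\R_+)$. Since such $g$ admits a continuous representative vanishing at $+\infty$, writing $g(x)^2 = -2\int_x^\infty g(y)g'(y)\d y$ for $x\geq 0$ and applying Cauchy--Schwarz together with $2ab\leq a^2+b^2$ yields $g(x)^2\leq \norm{g}{L^2(\R_+)}^2+\norm{g'}{L^2(\R_+)}^2 = \norm{g}{H^1(\R_+)}^2$ for every $x\geq 0$. Applying this with $g=f'$, and using $\norm{f'}{H^1(\R_+)}^2 = \norm{f'}{L^2(\R_+)}^2 + \norm{f''}{L^2(\R_+)}^2\leq \norm{f}{H^2(\R_+)}^2$, gives the uniform bound $\norm{f'}{L^\infty(\R_+)}\leq \norm{f}{H^2(\R_+)}$.

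Next, since $f(0)=0$, the fundamental theorem of calculus gives the pointwise estimate $\abs{f(y)} = \abs{\int_0^y f'(s)\d s} \leq y\,\norm{f'}{L^\infty(\R_+)}\leq y\,\norm{f}{H^2(\R_+)}$ for every $y\geq 0$. Integrating over $[0,z]$ then yields
\[
  \abs{\int_0^z f(y)\d y}\leq \int_0^z\abs{f(y)}\d y \leq \norm{f}{H^2(\R_+)}\int_0^z y\d y = \tfrac{z^2}{2}\,\norm{f}{H^2(\R_+)} \leq z^2\,\norm{f}{H^2(\R_+)},
\]
which is the asserted inequality.

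There is essentially no hard step here; the only ingredient worth stating explicitly is the explicit constant in the half-line Sobolev embedding. Even that is comfortable, because the final integration contributes a factor $\tfrac12$, leaving slack against the stated constant $1$: in fact any embedding constant up to $2$ in the bound for $\norm{f'}{L^\infty(\R_+)}$ would suffice. I therefore expect the write-up to be short, with the embedding estimate as the single ingredient requiring justification.
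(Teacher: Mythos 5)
Your proof is correct and follows essentially the same route as the paper: use $f(0)=0$ and the fundamental theorem of calculus to get a linear pointwise bound $\abs{f(y)}\leq y\sup\abs{f'}$, then integrate and control $\sup\abs{f'}$ by $\norm{f}{H^2(\R_+)}$ via a half-line Sobolev estimate. The only difference is cosmetic: you derive the embedding constant $1$ from the identity $g(x)^2=-2\int_x^\infty g g'$, whereas the paper obtains the (looser) constant $2$ by an averaging/Taylor argument over a unit interval, and both constants comfortably yield the stated bound.
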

\begin{proof}
  Since $H^2(\R_+)\hookrightarrow BUC^1([0,\infty))$, we can assume
  without loss of generality that $f\in BUC^1([0,\infty))$. Moreover,
  since $f(0) = 0$ we get from fundamental theorem of calculus
  \begin{equation*}
    \begin{aligned}
    \abs{\int_0^{z} f(y) \d y} &= \abs{\int_0^z \int_0^y \ddx f(x) \d x\d y}\\
      &\leq \int_0^z\int_0^y\abs{\ddx f(x)}\d x \d y \\
      &\leq \frac12 z^2 \sup_{x\in [0,\infty)} \abs{f(x)}
      \leq z^2 \norm{f}{H^2(\R_+)}.      
    \end{aligned}
  \end{equation*}
  Here, we used that
  \begin{equation}
    \label{eq:supdxfH2}
    \sup_{x\in [0,\infty)} \abs{f(x)} \leq  2 \norm{f}{H^2(\R_+)}.
  \end{equation}
  In fact, if we additionally assume that $f\in BUC^2(\R_+)$, then for all $x\in \R_+$,
  \begin{multline*}
    \abs{\ddx f(x)} \leq \sqrt{\int_0^{1} \abs{\ddx f(x+y)}^2 \d y}\\
    +
    \sqrt{\int_{0}^{1} y^2 \int_0^1 \abs{\ddxx f(x+\alpha y)}^2
      \d\alpha \d y }
    \leq 2 \norm{f}{H^2(\R_+)},
  \end{multline*}
  and then~\eqref{eq:supdxfH2} extends to all of $H^2$ since $BUC^2(\R_+)\cap
  H^2(\R_+)\subset H^2(\R_+)$ is dense, see also \cite[Lemma 4.2]{kim2012stochastic}.
\end{proof}

Moreover, we get uniform bounds on the Lipschitz constants.
\begin{lem}\label{lem:equilip}
  Let Assumption~\ref{ass:rho} hold true. Then, for all $n\in \bar \N$,
  $\Psi_n\colon \dom(A) \to \R$ is Lipschitz continuous on bounded
  sets. Moreover, for each $r\in (0,\infty)$,
  \begin{equation*}
    \sup_{n\in \N}\snorm{\Psi_n}{\Lip(\dom(A);\R);r} \leq 2 K_A
    \snorm{\varrho}{\Lip(\R^2;\R);2 K_A r}  <\infty.
  \end{equation*}
\end{lem}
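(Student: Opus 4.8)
The plan is to realise each $\Psi_n$ as the composition $\varrho\circ L_n$ of the locally Lipschitz function $\varrho$ with a linear map $L_n\colon \dom(A)\to\R^2$, and then to bound the operator norm of $L_n$ by $2K_A$ \emph{uniformly} in $n\in\bar\N$. Concretely, for $u=(u_1,u_2,p)\in\dom(A)=\H^2_D$ I would set $L_n(u):=\big(2n^2\int_0^{1/n}u_1(y)\d y,\,-2n^2\int_0^{1/n}u_2(y)\d y\big)$ for finite $n$ and $L_\infty(u):=(\ddx u_1(0),-\ddx u_2(0))$, so that $\Psi_n=\varrho\circ L_n$ for every $n\in\bar\N$; these maps are manifestly linear in $u$. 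The conclusion will then be the standard fact that precomposing a locally Lipschitz function with a uniformly bounded linear map yields a locally Lipschitz function whose Lipschitz constant on a ball is the product of the two constants.

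The crux is the uniform componentwise bound on $L_n$. Since $u_i\in H^2(\R_+)\cap H^1_0(\R_+)$, Lemma~\ref{lem:intnorm} with $z=1/n$ gives, for finite $n$,
\[ \Big|2n^2\int_0^{1/n}u_i(y)\d y\Big|\leq 2n^2\cdot n^{-2}\norm{u_i}{H^2(\R_+)}=2\norm{u_i}{H^2(\R_+)}, \]
so the $2n^2$ weight is precisely calibrated against the $z^2=n^{-2}$ decay and the $n$-dependence cancels. For $n=\infty$ the matching bound $\abs{\ddx u_i(0)}\leq 2\norm{u_i}{H^2(\R_+)}$ is exactly the pointwise derivative estimate established inside the proof of Lemma~\ref{lem:intnorm}. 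Hence in all cases each component of $L_n(u)$ is controlled by $2\norm{u_i}{H^2(\R_+)}\leq 2\norm{u}{\H^2}\leq 2K_A\norm{u}{A}$, the last inequality by the definition of $K_A$. By linearity this yields both $\abs{L_n(u)}\leq 2K_A r$ whenever $\norm{u}{A}\leq r$, and $\abs{L_n(u)-L_n(v)}=\abs{L_n(u-v)}\leq 2K_A\norm{u-v}{A}$ for all $u,v\in\dom(A)$.

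Finally I would compose with $\varrho$. For $u,v$ in the ball of radius $r$ in $\dom(A)$, both $L_n(u)$ and $L_n(v)$ lie in the ball of radius $2K_A r$ in $\R^2$, so Assumption~\ref{ass:rho} gives
\[ \abs{\Psi_n(u)-\Psi_n(v)}\leq\snorm{\varrho}{\Lip(\R^2;\R);2K_A r}\,\abs{L_n(u)-L_n(v)}\leq 2K_A\,\snorm{\varrho}{\Lip(\R^2;\R);2K_A r}\,\norm{u-v}{A}. \]
This establishes Lipschitz continuity on bounded sets for every $n\in\bar\N$ and is exactly the asserted bound on $\snorm{\Psi_n}{\Lip(\dom(A);\R);r}$, uniform in $n$; the bound is finite because local Lipschitz continuity of $\varrho$ means $\snorm{\varrho}{\Lip(\R^2;\R);2K_A r}<\infty$. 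I expect no genuine obstacle: the only real input is Lemma~\ref{lem:intnorm}, which makes the $n$-dependence disappear, and the one point to verify is that the componentwise bound controls whichever norm on $\R^2$ is used in the definition of $\snorm{\varrho}{\Lip}$ by the same factor $2K_A$, so that the constant $2K_A$ is obtained for both the Euclidean and the maximum norm.
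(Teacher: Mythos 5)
Your proof is correct and follows essentially the same route as the paper: Lemma~\ref{lem:intnorm} cancels the $n^2$ weight, linearity of the averaging map transfers the bound to differences, and composition with the locally Lipschitz $\varrho$ on the ball of radius $2K_A r$ gives the uniform constant. The only cosmetic difference is that for $n=\infty$ you invoke the derivative estimate~\eqref{eq:supdxfH2} directly, whereas the paper cites continuity of the trace operator on $\H^2$; both yield the same conclusion.
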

\begin{proof}
  First, let us note that $\Psi_\infty\colon \dom(A) \to \R$ is
  Lipschitz continuous on bounded sets, which follows from continuity
  of the trace operator on $\H^2$ and local Lipschitz continuity of
  $\varrho$, see~\cite[Lemma~4.3]{keller2016stefan}. 
  
  Let $r\in (0,\infty)$ and $u$,
  $\tilde u \in \dom(A)$ with $\norm{u}{A}$, $\norm{\tilde u}{A}\leq
  r$. By Lemma~\ref{lem:intnorm}, for all $n\in\N$
  \begin{equation}
    \label{eq:12}
    n^2\abs{\int_0^{1/n}u_{1\slash 2}(y) \d y}  \leq 
    \norm{u_{1\slash 2}}{H^2(\R_+)},
  \end{equation}
  so that 
  \begin{equation}
    \label{eq:18}
    \norm{\left(2n^2\int_0^{1/n}u_{1}(y) \d y, 2n^2\int_0^{1/n} u_2(y)\d y\right)}{\R^2}
    \leq 2 \norm{u}{\H^2} \leq 2 K_A \norm{u}{A}.
  \end{equation}
  Moreover, we get the same estimate with $u$ replaced by $u-\tilde u$. 
  This yields that for $n\in\N$,
  \begin{align*}
    \hspace{1em}&\hspace{-1em}
                  \abs{\Psi_n(u) - \Psi_n(\tilde u)} \\
                &\leq 2n^2 \snorm{\varrho}{\Lip(\R^2;\R); 2K_A r}
                  \norm{\left(\int_0^{1/n} \left(u_1(y) - \tilde
                  u_1(y)\right)\d y, \int_0^{1/n} \left(u_2(y) -
                  \tilde u_2(y)\right) \d y\right)}{\R^2}\\
                &\leq 2 K_A \snorm{\varrho}{\Lip(\R^2;\R); 2K_A r} \norm{u-\tilde u}{A}. \qedhere
  \end{align*}
\end{proof}

From the previous lemma and \cite[Section 4]{keller2016stefan} we get
the following. 
\begin{lem}\label{lem:lg}
  Assume that the Assumptions~\ref{ass:mu}, \ref{ass:sigma},
  \ref{ass:rho}, \ref{ass:zeta} and~\ref{ass:global} hold true and let $\alpha \in
  (0, 1/4)$. Then, 
  \[ \sup_{n\in \N}\sup_{u\in \dom(A)}
    \frac{\norm{B_n(u)}{\H^{2\alpha}} + \norm{C(u)}{\HS(U;\dom(A))}}{1+
      \norm{u}{\dom(A)}} <\infty.\]
\end{lem}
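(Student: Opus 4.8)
The plan is to estimate $\norm{B_n(u)}{\H^{2\alpha}}$ and $\norm{C(u)}{\HS(U;\dom(A))}$ separately and to exploit that the $n$-dependence enters only through the scalar factor $\Psi_n$, which is uniformly bounded under Assumption~\ref{ass:global}. For the drift, I would first split, by the triangle inequality,
\[
  \norm{B_n(u)}{\H^{2\alpha}} \leq \norm{N_\mu(u)}{\H^{2\alpha}} + \abs{\Psi_n(u)}\,\norm{\bnabla u}{\H^{2\alpha}} + \norm{u}{\H^{2\alpha}},
\]
and treat the three summands in turn.

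For the middle term, the key observation is that $\varrho$ is globally bounded by Assumption~\ref{ass:global}, so that by the very definition of $\Psi_n$ one has the uniform bound $\sup_{n\in\bar\N}\sup_{u\in\dom(A)} \abs{\Psi_n(u)} \leq \norm{\varrho}{\infty} < \infty$. Since $\bnabla\colon \dom(A)\to \H^{2\alpha}$ is (globally) Lipschitz by Lemma~\ref{lem:std}.\ref{i:std:nabla}, it has linear growth, $\norm{\bnabla u}{\H^{2\alpha}} \leq \norm{\bnabla 0}{\H^{2\alpha}} + \snorm{\bnabla}{\Lip(\dom(A);\H^{2\alpha})}\norm{u}{\dom(A)}$. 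The product of a uniformly bounded factor with a function of linear growth again has linear growth, and, crucially, the resulting constant does not depend on $n$. The last summand is controlled by the continuous embedding $\dom(A) = \H^2_D \hookrightarrow \H^{2\alpha}$.

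The first summand and the diffusion term are where Assumption~\ref{ass:global} really enters, but neither depends on $n$. Under the global bounds on $(b,\tilde b)$ in Assumption~\ref{ass:mu}, the maps $\mu_\pm$ grow at most linearly in $(y,z)$, and the Nemytskii-type estimates of \cite[Section~4]{keller2016stefan} upgrade the local Lipschitz statement of Lemma~\ref{lem:std}.\ref{i:std:Nmu} to linear growth of $N_\mu\colon \dom(A)\to \H^{2\alpha}$. Likewise, the affine structure $\sigma_\pm(x,y) = \sigma_\pm^1(x) + \sigma_\pm^2(x)y$ imposed by Assumption~\ref{ass:global}, together with Assumption~\ref{ass:zeta}, yields linear growth of $C\colon \dom(A)\to \HS(U;\dom(A))$ by the same arguments as in \cite[Section~4]{keller2016stefan}. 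Combining these bounds with the uniform estimate for the $\Psi_n\bnabla$ term and dividing by $1 + \norm{u}{\dom(A)}$ gives the claim.

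I expect the bookkeeping to be routine once the structural point is made. The only genuinely delicate point is verifying the uniformity in $n$: this is not automatic, but follows precisely because the sole $n$-dependent object $\Psi_n$ is controlled by the single $n$-independent bound $\norm{\varrho}{\infty}$, so that the linear-growth constants for $N_\mu$, $C$ and $\bnabla$ can be chosen once and for all. I would be careful to note that the same reasoning also covers $n=\infty$, using that $\Psi_\infty = \varrho(\partial_x u_1(0), -\partial_x u_2(0))$ is likewise bounded by $\norm{\varrho}{\infty}$.
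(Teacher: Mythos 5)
Your proposal is correct and follows essentially the same route as the paper: the paper likewise cites \cite[Lemma 4.4]{keller2016stefan} for the linear growth of $N_\mu$ and $C$, bounds $\sup_{n}\sup_u \abs{\Psi_n(u)}$ by $\norm{\varrho}{\infty}$ to handle the only $n$-dependent term $\Psi_n(u)\bnabla u$, and concludes via the embedding $\H^1\hookrightarrow\H^{2\alpha}$. The only cosmetic difference is that you invoke the Lipschitz continuity of $\bnabla$ from Lemma~\ref{lem:std} where the paper estimates $\norm{\bnabla u}{\H^1}$ directly, which amounts to the same bound.
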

\begin{proof}
  Linear growth of $N_\mu$ and $C$ have been shown in \cite[Lemma
  4.4]{keller2016stefan}. Since $\varrho$ is bounded so is
  $\Psi_n$. More precisely, it holds that
  \[ \sup_{n\in\N}\sup_{u\in \dom(A)} \abs{\Psi_n(u)} \leq
    \norm{\varrho}{\infty}, \]
  and thus 
  \[\sup_{n\in\N}\sup_{u\in \dom(A)} \norm{\Psi(u)\bnabla u}{\H^1}^2 \leq
    \norm{\varrho}{\infty}\left(1 + \norm{u}{\H^2}^2\right)\leq
    \norm{\varrho}{\infty}(1\wedge K_ A)\left(1 +
      \norm{u}{\dom(A)}^2\right).\]
  Continuity of the embedding $\H^1 \hookrightarrow \H^{2\alpha}$ then finishes the
  proof. 
\end{proof}

Finally, we fix the convergence result on $(\Psi_n)$. 
\begin{lem}\label{lem:Psinconv}
  Let Assumption~\ref{ass:rho} hold true. Then, for all $u\in
  \dom(A)$,
  \[ \sup_{n\in \N} \sqrt{n} \norm{B_n(u) - B_\infty(u)}{\H^{1}} \leq \snorm{\varrho}{\Lip(\R^2;\R);2\norm{u}{\H^2(\R_+)}}
    \norm{u}{\H^2}\left(1+\norm{u}{\H^2}\right). \]
\end{lem}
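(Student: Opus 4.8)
The plan is to exploit that $B_n$ and $B_\infty$ differ only through the interface coefficient. Since $B_n = N_\mu + \Psi_n(\cdot)\bnabla(\cdot) + \Id_{\L^2}$ for every $n\in\bar\N$, the terms $N_\mu(u)$ and $u$ cancel in the difference, leaving
\[ B_n(u) - B_\infty(u) = \bigl(\Psi_n(u) - \Psi_\infty(u)\bigr)\,\bnabla u. \]
As $\Psi_n(u) - \Psi_\infty(u)$ is a scalar, the $\H^1$-norm factorizes, and using $\bnabla u = (\ddx u_1, -\ddx u_2, 1)$ together with $\norm{\ddx u_i}{H^1(\R_+)}\leq\norm{u_i}{H^2(\R_+)}$ I would bound $\norm{\bnabla u}{\H^1}\leq\sqrt{1 + \norm{u}{\H^2}^2}\leq 1 + \norm{u}{\H^2}$. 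Thus it suffices to establish $\sqrt n\,\abs{\Psi_n(u) - \Psi_\infty(u)}\leq\snorm{\varrho}{\Lip(\R^2;\R);2\norm{u}{\H^2}}\norm{u}{\H^2}$, and the factor $1+\norm{u}{\H^2}$ will supply the remaining term in the statement.

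Next I would apply the local Lipschitz continuity of $\varrho$ from Assumption~\ref{ass:rho}. The two arguments $(2n^2\int_0^{1/n}u_1, -2n^2\int_0^{1/n}u_2)$ and $(\ddx u_1(0), -\ddx u_2(0))$ both lie in the ball of radius $2\norm{u}{\H^2}$ in $\R^2$: for the first this is precisely estimate~\eqref{eq:18} from the proof of Lemma~\ref{lem:equilip}, and for the second it follows from the pointwise bound $\abs{\ddx u_i(0)}\leq 2\norm{u_i}{H^2(\R_+)}$ recorded in~\eqref{eq:supdxfH2}. Hence
\[ \abs{\Psi_n(u) - \Psi_\infty(u)}\leq\snorm{\varrho}{\Lip(\R^2;\R);2\norm{u}{\H^2}}\Bigl(\sum_{i=1,2}\abs{2n^2\int_0^{1/n}u_i(y)\d y - \ddx u_i(0)}^2\Bigr)^{1/2}, \]
so the whole estimate reduces to a componentwise rate bound for the approximation of the boundary derivative.

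The heart of the argument, and the step I expect to be the main obstacle, is showing that for $f\in H^2(\R_+)\cap H^1_0(\R_+)$ one has the rate $n^{-1/2}$ in
\[ \Bigl|2n^2\int_0^{1/n}f(y)\d y - f'(0)\Bigr|\leq\frac{C}{\sqrt n}\,\norm{\ddxx f}{L^2(\R_+)}. \]
For this I would use Taylor's formula with integral remainder, $f(y) = f(0) + f'(0)y + \int_0^y(y-s)f''(s)\d s$; here the boundary condition $f(0)=0$ (valid since $u\in\dom(A)=\H^2_D$) is essential, as it annihilates the constant term, which would otherwise contribute an unbounded $2nf(0)$. Integrating over $[0,1/n]$ and multiplying by $2n^2$ reproduces $f'(0)$ exactly from the linear term and leaves the double integral of the remainder; a Cauchy--Schwarz estimate gives $\abs{\int_0^y(y-s)f''(s)\d s}\leq(y^3/3)^{1/2}\norm{\ddxx f}{L^2(\R_+)}$, and integrating $y^{3/2}$ over $[0,1/n]$ produces a factor $n^{-5/2}$, which against the prefactor $2n^2$ yields the rate $n^{-1/2}$ with explicit constant $\tfrac{4}{5\sqrt 3}<1$. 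Summing the two components against $\norm{\ddxx u_1}{L^2}^2+\norm{\ddxx u_2}{L^2}^2\leq\norm{u}{\H^2}^2$, combining with the Lipschitz bound and with $\norm{\bnabla u}{\H^1}\leq 1+\norm{u}{\H^2}$, and taking the supremum over $n\in\N$ then gives the stated inequality.
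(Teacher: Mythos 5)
Your proposal is correct and follows essentially the same route as the paper: the same cancellation $B_n(u)-B_\infty(u)=(\Psi_n(u)-\Psi_\infty(u))\bnabla u$, the same use of the local Lipschitz seminorm of $\varrho$ on the ball of radius $2\norm{u}{\H^2}$ justified by~\eqref{eq:18} and~\eqref{eq:supdxfH2}, and the same key rate estimate, since your Taylor expansion with integral remainder and the boundary condition $f(0)=0$ is exactly the paper's double application of the fundamental theorem of calculus followed by Cauchy--Schwarz. The only (immaterial) difference is that you keep the weight $(y-s)$ inside the Cauchy--Schwarz step and thereby obtain the slightly sharper constant $\tfrac{4}{5\sqrt{3}}$ where the paper settles for $1$.
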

\begin{proof}
  First note that $B_n(u) - B_\infty(u) = \bnabla u
  \left(\Psi_n(u) - \Psi_\infty(u)\right)$ for each $u\in \H^2_D$. 
  
  Let $f\in H^2(\R_+)\cap H^1_0(\R_+)$ and $z\in \R_+$, then by using
  that $f(0) = 0$, fundamental theorem of calculus and Cauchy-Schwartz inequality,
  \begin{align*}
    \abs{2z^{-2}\int_0^zf(y) \d y -\ddx f(0)} 
    &= \abs{2z^{-2}\int_0^z(f(y)- f(0))\d y - \ddx f(0)}\\
    &= \abs{2z^{-2}\int_0^z\int_0^y (\ddx f(x) - \ddx f(0) )\d x \d y} \\
    &= \abs{2z^{-2}\int_0^z\int_0^y \int_0^x\ddxx f(x')\d x' \d x \d y} \\*
    &\leq \int_0^z\abs{\ddxx f(x)}\d x \leq \sqrt{z} \norm{f}{H^2(\R_+)}.
  \end{align*}
  Recall from Lemma~\ref{lem:intnorm} and~\eqref{eq:supdxfH2} also that
  \begin{equation}
    \max\left\{ \abs{\ddx f(0)}, \abs{2n^2 \int_0^{1/n} f(y) \d y}\right\} \leq 2 \norm{f}{H^2(\R_+)}.
  \end{equation}
  Thus, we get for each $u\in \H^2_D$, with $z:= 1/n$, for $n\in \N$,
  \begin{equation*}
    \abs{\Psi_n(u) - \Psi_\infty(u)}    \leq
    \frac{1}{\sqrt{n}}  \snorm{\varrho}{\Lip(\R^2;\R);2\norm{u}{\H^2(\R_+)}}
    \norm{u}{\H^2(\R_+)} .\qedhere
  \end{equation*}
\end{proof}

From Theorem~\ref{thm:approx_loc} with $E:= \H^{2\alpha}$, for some $\alpha
\in (0,1/4)$, we get the
following; see also \cite[Theorem 3.17 and Corollary 3.21]{keller2016stefan} with $E:= \L^2$.
\begin{prop}\label{prop:existence_loc}
  Let Assumptions~\ref{ass:mu},~\ref{ass:sigma},~\ref{ass:rho}
  and~\ref{ass:zeta} hold true, let $p_0\in \R$, $v_0\colon \R\to
  \R$ such that $v_0(p_0+(\cdot))\vert_{\R_+}$, $v_0(p_0-(\cdot))\vert_{\R_+} \in
  H^2_D(\R_+)$. Then, for each $n\in \bar \N$ there exists a unique maximal
  predictable strictly positive stopping time $\tau_n$ and a unique
  $\FF$-adapted and $\H^2$-continuous stochastic process $X_n$ such that on
  $\llbrak 0,\tau_n\llbrak$ as an $\L^2$-integral equation
  \begin{equation}
    \label{eq:strong_see}
    X_n(t) = X_0  + \int_0^t\left[
      AX_n(s) + B_n(X_n(s))\right] \d s + \int_0^t C(X_n(s))\d W_s,   \quad t\geq 0,
  \end{equation}
  with initial data $X_0:= (v_0(p_0+(\cdot))\vert_{\R_+},
  v_0(p_0-(\cdot))\vert_{\R_+}, p_0)$. Moreover,
  almost surely, 
  \begin{equation*}
    \lim_{t\nearrow \tau_n} \norm{X_n(t)}{\H^2_D} = \infty,\qquad
    \text{on }\{ \tau_n <\infty\}.
  \end{equation*}
\end{prop}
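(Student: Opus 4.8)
The plan is to obtain the statement, for each fixed $n\in\bar\N$ separately, by specializing the abstract existence result Theorem~\ref{thm:approx_loc}.\ref{i:existence_loc} to the concrete operator $A$ and coefficients $B_n$, $C$ introduced above. Fix $\alpha\in(0,1/4)$ and take as base Hilbert space $E:=\H^{2\alpha}$ and $U:=L^2(\R)$. By Lemma~\ref{lem:A} the operator $A$ is negative self-adjoint on $\L^2$ and generates an analytic $C_0$-semigroup of contractions, and by Remark~\ref{rmk:interpol:reiteration} its part in $E=\H^{2\alpha}$ again generates an analytic, strongly continuous semigroup of negative type, so the Setting of Section~\ref{sec:sse_setting} is met with this $E$, $A$ and $U$ on the given stochastic basis. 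The crucial bookkeeping step is to keep the fixed exponent $\alpha$ (which pins down the base space $E=\H^{2\alpha}$) distinct from the fractional power $\beta$ fed into the abstract theorem: combining Lemma~\ref{lem:A} with the reiteration property (Proposition~\ref{prop:reiteration}, Remark~\ref{rmk:interpol:reiteration}) yields, up to equivalence of norms, that the domain of the $(1-\alpha)$-th power of the part of $A$ in $E$ equals $\dom(A)=\H^2_D$. Hence one applies Theorem~\ref{thm:approx_loc} with fractional exponent $\beta:=1-\alpha\in(3/4,1)\subset[0,1)$, and the solution is sought in $E_\beta\cong\H^2_D$.

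Next I would verify that the coefficients possess the required regularity as maps $E_\beta=\H^2_D\to E=\H^{2\alpha}$ and $E_\beta\to\HS(U;E_\beta)$. Lemma~\ref{lem:std} shows that $N_\mu$ and $\bnabla$ map $\dom(A)$ into $\H^{2\alpha}$ Lipschitz continuously on bounded sets (the latter even globally), and that $C$ maps $\dom(A)$ into $\HS(U;\dom(A))$ Lipschitz on bounded sets; since $\dom(A)$ and $E_\beta$ carry equivalent norms, this is exactly the $\LipLoc$ regularity of $C$ demanded by the theorem. For the drift, Lemma~\ref{lem:equilip} (together with the trace-continuity argument for $\Psi_\infty$) gives that $\Psi_n\colon\dom(A)\to\R$ is Lipschitz on bounded sets, so the routine product estimate
\[
\norm{\Psi_n(u)\bnabla u-\Psi_n(\tilde u)\bnabla\tilde u}{\H^{2\alpha}}
\leq \abs{\Psi_n(u)}\,\norm{\bnabla u-\bnabla\tilde u}{\H^{2\alpha}}
+\abs{\Psi_n(u)-\Psi_n(\tilde u)}\,\norm{\bnabla\tilde u}{\H^{2\alpha}}
\]
shows that $u\mapsto\Psi_n(u)\bnabla u$, and hence $B_n=N_\mu+\Psi_n(\cdot)\bnabla(\cdot)+\Id_{\L^2}$ (the last summand being bounded linear from $\H^2$ into $\H^{2\alpha}$ via the continuous embedding), lies in $\LipLoc(E_\beta;E)$. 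The deterministic datum $X_0=(v_0(p_0+\cdot)\vert_{\R_+},v_0(p_0-\cdot)\vert_{\R_+},p_0)$ belongs to $\H^2=\dom(A)=E_\beta$ by hypothesis, hence to $L^q(\Omega,\F_0,\PP;E_\beta)$ for any chosen $q\in(2,\infty)$.

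With these hypotheses in force, Theorem~\ref{thm:approx_loc}.\ref{i:existence_loc} supplies, for each $n\in\bar\N$, a unique maximal stopping time $\tau_n$ and a unique $\FF$-adapted, $E_\beta$-continuous process $X_n$ on $\llbrak 0,\tau_n\llbrak$ solving the mild equation with $\lim_{t\nearrow\tau_n}\norm{X_n(t)}{E_\beta}=\infty$ on $\{\tau_n<\infty\}$; invoking that $\norm{\cdot}{E_\beta}$ and $\norm{\cdot}{\H^2_D}$ are equivalent (with constant $K_A$) transfers both continuity and blow-up to the claimed $\H^2_D$-statements. Since $X_n$ takes values in $\dom(A)$ and is continuous there, the mild formulation may be rewritten as the strong $\L^2$-integral equation~\eqref{eq:strong_see} by the standard mild-versus-strong identification, as already carried out in~\cite[Theorem 3.17]{keller2016stefan}; strict positivity and predictability of $\tau_n$ follow from $X_0$ having finite $E_\beta$-norm and from $\tau_n$ being the increasing limit of the predictable exit times $\tau_n^{(r)}$ of Theorem~\ref{thm:approx_loc}. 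I expect the main obstacle to be not any single estimate but the correct identification of the two interpolation scales together with checking that the product $\Psi_n(\cdot)\bnabla(\cdot)$ retains local Lipschitz continuity into the \emph{weaker} space $\H^{2\alpha}$; everything else is a direct transcription of the abstract framework.
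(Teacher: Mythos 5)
Your proposal is correct and follows the same route as the paper, which obtains this proposition by specializing Theorem~\ref{thm:approx_loc}.\ref{i:existence_loc} to $E:=\H^{2\alpha}$ for $\alpha\in(0,1/4)$ (so that $E_{1-\alpha}\cong\H^2_D$ by reiteration) and verifying the local Lipschitz hypotheses via Lemmas~\ref{lem:A}, \ref{lem:std} and \ref{lem:equilip}, together with the mild-to-strong identification from \cite{keller2016stefan}. The paper leaves these details implicit in the one-line remark preceding the statement; your write-up supplies exactly the bookkeeping it omits, including the correct separation of the two interpolation exponents.
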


Under additional boundedness assumptions we get global
existence. 
\begin{cor}\label{cor:existence_global}
  Let Assumptions~\ref{ass:mu}, \ref{ass:sigma}, \ref{ass:rho}, \ref{ass:zeta} and additionally the linear growth assumptions~\ref{ass:global} hold true, let $p_0\in \R$, $v_0\colon \R\to \R$ such that $v_0(p_0+(\cdot))\vert_{\R_+}$, $v_0(p_0-(\cdot))\vert_{\R_+} \in
  H^\alpha_D(\R_+)$. Then, for each $n\in \bar \N$ there exists a
  unique $\H^2_D$-continuous and $\FF$-adapted stochastic process such
  that for each $t\in [0,\infty)$, the $\L^2$-integral equation 
  \begin{equation}
    \label{eq:strong_see_global}
    X_n(t) = X_0  + \int_0^t\left[
      AX_n(s) + B_n(X_n(s))\right] \d s + \int_0^t C(X_n(s))\d W_s,   
  \end{equation}
  holds true almost surely, with initial data $X_0:= (v_0(p_0+(\cdot))\vert_{\R_+},
  v_0(p_0-(\cdot))\vert_{\R_+}, p_0)$. Moreover, for each $q\in
  [1,\infty)$ and $T\in (0,\infty)$,
  \begin{equation}
    \label{eq:unif_bd}
    \sup_{n\in \N} \E{\sup_{0\leq t\leq T}\norm{X_n(t)}{\H^2_D}^q} < \infty.
  \end{equation}
\end{cor}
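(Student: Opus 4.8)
The plan is to deduce the statement directly from the abstract Theorem~\ref{thm:approx_loc}, used in exactly the framework already set up for Proposition~\ref{prop:existence_loc}. First I would fix $\alpha\in(0,1/4)$ and take the base Hilbert space to be $E:=\H^{2\alpha}=\dom((-A)^{\alpha})$, the identification (with equivalent norms) being guaranteed by Lemma~\ref{lem:A}. Writing $\tilde A$ for the part of $A$ in $E$ (Remark~\ref{rmk:interpol:reiteration}) and invoking the reiteration property of Proposition~\ref{prop:reiteration}, the abstract solution space $\dom((-\tilde A)^{1-\alpha})$ is identified with $\dom((-A)^{1})=\dom(A)=\H^2_D$, and its norm $\norm{\cdot}{A}$ with the equivalent norm $\norm{\cdot}{\H^2_D}$ via the constant $K_A$. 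Thus the abstract exponent is $1-\alpha\in(3/4,1)\subset[0,1)$, the abstract state space is $\H^2_D$, and the initial datum $X_0=(v_0(p_0+(\cdot))\vert_{\R_+},v_0(p_0-(\cdot))\vert_{\R_+},p_0)$ lies in it.

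Next I would verify the hypotheses of Theorem~\ref{thm:approx_loc}. Local Lipschitz continuity of the coefficients as maps $\H^2_D\to\H^{2\alpha}$ (for $B_n$) and $\H^2_D\to\HS(U;\H^2_D)$ (for $C$) is provided by Lemma~\ref{lem:std}, together with Lemma~\ref{lem:equilip} for the interface term $\Psi_n$; the uniform-in-$n$ local Lipschitz bound~\eqref{eq:loc_bd} is the explicit estimate of Lemma~\ref{lem:equilip}. The pointwise convergence $B_n\to B_\infty$ in~\eqref{eq:loc_conv} is Lemma~\ref{lem:Psinconv}, while $X_{0,n}=X_0$ and $C_n=C$ are constant in $n$, so their convergence is automatic. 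Finally, Assumption~\ref{ass:global} enters only through Lemma~\ref{lem:lg}, which yields the uniform linear growth
\[
  \sup_{n\in\N}\sup_{u\in\dom(A)}\frac{\norm{B_n(u)}{\H^{2\alpha}}+\norm{C(u)}{\HS(U;\dom(A))}}{1+\norm{u}{\dom(A)}}<\infty,
\]
which is precisely the extra hypothesis needed for Theorem~\ref{thm:approx_loc}.\ref{i:globalex_unif} (recall $C_n=C$ for all $n$).

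With these inputs in place, global existence $\tau_n=\infty$ almost surely for each $n\in\bar\N$ is Theorem~\ref{thm:approx_loc}.\ref{i:globalex}; combined with the local existence and uniqueness of Proposition~\ref{prop:existence_loc} this produces the unique global $\H^2_D$-continuous, $\FF$-adapted solution of~\eqref{eq:strong_see_global}. For the uniform estimate~\eqref{eq:unif_bd} I would invoke Theorem~\ref{thm:approx_loc}.\ref{i:globalex_unif}: its remaining hypothesis $\sup_{n\in\N}\E{\norm{X_{0,n}}{\H^2_D}^q}<\infty$ is trivial, since $X_0$ is deterministic and independent of $n$, and it then delivers $\sup_{n\in\N}\E{\sup_{0\leq t\leq T}\norm{X_n(t)}{\H^2_D}^q}<\infty$ for the chosen $q\in(2,\infty)$. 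To reach every $q\in[1,\infty)$ I would simply run the argument with some $q'>\max\{q,2\}$ and use $\norm{\cdot}{L^q(\Omega)}\leq\norm{\cdot}{L^{q'}(\Omega)}$ on the probability space, the norm equivalence above transporting the bound from the abstract $\dom((-\tilde A)^{1-\alpha})$-norm to $\norm{\cdot}{\H^2_D}$.

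Since the argument is essentially an assembly of the preceding lemmas and the abstract theorem, I do not expect a deep obstacle. The only genuinely new ingredient relative to Proposition~\ref{prop:existence_loc} is the uniform linear growth of Lemma~\ref{lem:lg}, made available by Assumption~\ref{ass:global}. The care required is bookkeeping: checking that the reiteration identification $\dom((-\tilde A)^{1-\alpha})=\H^2_D$ and the $K_A$-equivalence let one state the bound in the $\H^2_D$-norm, and extending the admissible exponent from the abstract range $(2,\infty)$ to the full range $[1,\infty)$.
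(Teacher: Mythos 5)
Your argument is correct and follows essentially the same route as the paper: uniform linear growth of $B_n$ and $C$ from Lemma~\ref{lem:lg} under Assumption~\ref{ass:global} gives non-explosion, and the uniform $L^q$ bound~\eqref{eq:unif_bd} comes from Theorem~\ref{thm:approx_loc}.\ref{i:globalex_unif}. The only (immaterial) difference is that the paper obtains global existence and the strong-solution identification by citing external results from the earlier work on which Proposition~\ref{prop:existence_loc} is based, whereas you route through Theorem~\ref{thm:approx_loc}.\ref{i:globalex} combined with Proposition~\ref{prop:existence_loc}; your explicit extension from $q\in(2,\infty)$ to $q\in[1,\infty)$ via monotonicity of $L^q(\Omega)$-norms is a detail the paper leaves implicit.
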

\begin{proof}
  It follows from Lemma~\ref{lem:lg} that $C$ and
  $B_n$ have linear growth, uniformly in $n\in \bar \N$. Thus,
  \cite[Corollary 3.20]{keller2016stefan} yields global mild solutions
  on $\H^2_D$, which is by \cite[Corollary 3.21]{keller2016stefan}
  also the unique strong solution and, in particular, satisfies
  \eqref{eq:strong_see_global}. Now, the
  $L^q$-boundedness~\eqref{eq:unif_bd} follows from
  Theorem~\ref{thm:approx_loc}.\ref{i:globalex_unif} .
\end{proof}

\begin{thm}[Approximation Theorem]\label{thm:approx_Stefan_see}
  Let Assumptions~\ref{ass:mu},~\ref{ass:sigma},~\ref{ass:rho}
  and~\ref{ass:zeta} hold true, and denote by $(\tau_n, X_n)$ the
  unique continuous maximal solutions of \eqref{eq:strong_see},
  respectively for each $n\in\bar\N$. Then, the following holds true.
  \begin{enumerate}[label=(\alph*)]
  \item\label{i:conv_see_loc} For each $\alpha\in (0,1/4)$ and $q\in (2,\infty)$, the assumptions of
    Theorem~\ref{thm:approx_loc}.\ref{i:existence_loc} --
    \ref{i:continuity_loc} are satisfied choosing $E:= \H^{2\alpha}$. In particular, for each $t\geq 0$, in probability,
    \[ \H^2-\lim_{n\to \infty} X_n(t) \1_{\llbrak 0,\tau_n\wedge \tau_\infty\llbrak}(t) = X_\infty(t)
      \1_{\llbrak 0,\tau_\infty\llbrak}(t).\]
  \item\label{i:conv_see_global} If, in addition, Assumption~\ref{ass:global} holds true, then
    for each $\alpha \in (0,1/4)$ and $q\in (2,\infty)$ the
    assumptions of Theorem~\ref{thm:approx_loc}.\ref{i:globalex} and
    \ref{i:globalex_unif} are satisfied. In particular, for each $q'\in [1,\infty)$,
    \[ \lim_{n\to\infty} \E{\sup_{0\leq t\leq T}\norm{X_\infty(t) -
          X_n(t)}{\H^2_D}^{q'}} = 0.\]
  \end{enumerate}  
\end{thm}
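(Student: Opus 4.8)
The plan is to deduce both parts by realizing the abstract Theorem~\ref{thm:approx_loc} in the concrete setting, the one genuinely non-obvious point being a shift of the base space. Since $\dom(A)=\H^2_D$ equals $\dom((-A)^1)$, the solutions $X_n$ of~\eqref{eq:strong_see} live in the \emph{full} domain of $A$, which is not an admissible solution space for Theorem~\ref{thm:approx_loc} because its parameter must lie in $[0,1)$. To circumvent this I would fix $\alpha\in(0,1/4)$, take as new base space $E:=\H^{2\alpha}$ equipped with the part $\widetilde A$ of $A$ in $\H^{2\alpha}$, and invoke Theorem~\ref{thm:approx_loc} with parameter $\beta:=1-\alpha\in(3/4,1)\subset[0,1)$. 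By Remark~\ref{rmk:interpol:reiteration}, $\widetilde A$ again generates an analytic strongly continuous semigroup on $\H^{2\alpha}$, and by the reiteration property Proposition~\ref{prop:reiteration} together with Lemma~\ref{lem:A} the associated solution space is $\dom((-\widetilde A)^{\beta})=\dom((-A)^{\alpha+\beta})=\dom(A)=\H^2_D$. In particular $\beta<1$ makes the singularity $t^{-\beta}$ appearing in the drift convolution integrable via Lemma~\ref{lem:StHa}, and the abstract solution space is exactly the space in which $X_n$ is constructed in Proposition~\ref{prop:existence_loc}.

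With this identification the required coefficient regularity is furnished by Lemma~\ref{lem:std}: $N_\mu$ and $\bnabla$ map $\dom(A)$ into $\H^{2\alpha}$ (Lipschitz on bounded sets, respectively globally Lipschitz), and $C$ maps $\dom(A)$ into $\HS(U;\dom(A))$ Lipschitz on bounded sets, so that $C\in\LipLoc(\H^2_D;\HS(U;\H^2_D))$. For the interface term I would combine Lemma~\ref{lem:equilip} (which gives $\Psi_n\in\LipLoc(\dom(A);\R)$) with $\bnabla$ through the bilinear estimate $\norm{\Psi_n(u)\bnabla u-\Psi_n(v)\bnabla v}{\H^{2\alpha}}\leq \abs{\Psi_n(u)}\norm{\bnabla u-\bnabla v}{\H^{2\alpha}}+\abs{\Psi_n(u)-\Psi_n(v)}\norm{\bnabla v}{\H^{2\alpha}}$; since both factors are bounded and Lipschitz on each ball, $\Psi_n\bnabla$, and hence $B_n=N_\mu+\Psi_n\bnabla+\Id$, lies in $\LipLoc(\H^2_D;\H^{2\alpha})$ for every $n\in\bar\N$.

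It then remains to check the convergence and uniform-bound hypotheses~\eqref{eq:loc_conv} and~\eqref{eq:loc_bd}. The initial datum $X_0$ and the diffusion $C$ are independent of $n$, so those convergences are trivial. For $B_n\to B_\infty$ I would invoke Lemma~\ref{lem:Psinconv}, which yields $\norm{B_n(u)-B_\infty(u)}{\H^1}\leq c_u\,n^{-1/2}\to0$ for each $u\in\dom(A)$, and pass to the base norm through the continuous embedding $\H^1\hookrightarrow\H^{2\alpha}$ (valid since $2\alpha<1$). The uniform Lipschitz bound~\eqref{eq:loc_bd} follows from Lemma~\ref{lem:equilip}, which controls $\snorm{\Psi_n}{\Lip(\dom(A);\R);r}$ uniformly in $n$, together with the estimate $\sup_n\abs{\Psi_n(u)}\leq\abs{\varrho(0,0)}+r\sup_n\snorm{\Psi_n}{\Lip(\dom(A);\R);r}$ on the ball of radius $r$ and the $n$-independence of $N_\mu$, $\bnabla$, $C$ and $\Id$; feeding these into the product estimate above gives $\sup_{n\in\bar\N}\snorm{B_n}{\Lip(\H^2_D;\H^{2\alpha});r}<\infty$. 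This verifies the hypotheses of Theorem~\ref{thm:approx_loc}.\ref{i:existence_loc}--\ref{i:continuity_loc}, and~\eqref{eq:continuity_P_loc} delivers the convergence in probability of part~\ref{i:conv_see_loc}; since $\norm{\cdot}{\H^2_D}$ and $\norm{\cdot}{\H^2}$ are equivalent on $\H^2_D$ (Lemma~\ref{lem:A}), $\H^2_D$-convergence upgrades to $\H^2$-convergence.

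For~\ref{i:conv_see_global} I would additionally assume Assumption~\ref{ass:global} and verify the two extra hypotheses. The uniform linear growth of $B_n$ and $C$ demanded by Theorem~\ref{thm:approx_loc}.\ref{i:globalex} and~\ref{i:globalex_unif} is precisely Lemma~\ref{lem:lg}, and $\sup_{n}\E{\norm{X_{0,n}}{\H^2_D}^q}=\norm{X_0}{\H^2_D}^q<\infty$ because $X_0$ is deterministic and $n$-independent. Applying~\ref{i:globalex_unif} for a fixed $q\in(2,\infty)$ then gives $L^p$-convergence for every $p\in[1,q)$, and letting $q\to\infty$—legitimate by the $L^q$-bound~\eqref{eq:unif_bd} of Corollary~\ref{cor:existence_global}—covers all $q'\in[1,\infty)$. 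I expect the main obstacle to be the base-space shift itself: one must keep the abstract parameter $\beta$ inside $[0,1)$ while forcing the solution space to be the full domain $\dom(A)=\H^2_D$, which is exactly what the reiteration identification $\dom((-\widetilde A)^{\beta})=\dom((-A)^{\alpha+\beta})$ and the integrability of $t^{-\beta}$ in Lemma~\ref{lem:StHa} achieve in tandem; the uniform-in-$n$ control of the nonlinear interface term $\Psi_n\bnabla$ is the only other point requiring genuine care.
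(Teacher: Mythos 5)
Your proposal is correct and follows essentially the same route as the paper: the proof there consists precisely of assembling Lemmas~\ref{lem:A}, \ref{lem:std}, \ref{lem:equilip} and \ref{lem:Psinconv} to verify the hypotheses of Theorem~\ref{thm:approx_loc} with base space $E:=\H^{2\alpha}$ (and the solution space $\dom(A)=\H^2_D$ recovered by reiteration, exactly as you describe), with Lemma~\ref{lem:lg} added for part~\ref{i:conv_see_global}. Your write-up merely makes explicit the bookkeeping — the shift of the abstract interpolation parameter to $1-\alpha$, the product estimate for $\Psi_n\bnabla$, and the embedding $\H^1\hookrightarrow\H^{2\alpha}$ — that the paper leaves implicit.
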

\begin{proof}
  Part~\ref{i:conv_see_loc} is a consequence of Lemmas~\ref{lem:A},
  \ref{lem:std}, \ref{lem:equilip} and
  \ref{lem:Psinconv}. Item~\ref{i:conv_see_global} then follows by
  additional application of Lemma~\ref{lem:lg}.
\end{proof}

\begin{rmk}\label{rmk:convrate}
  After truncation of the coefficients as in~\eqref{eq:truncation},
  Lemma~\ref{lem:Psinconv} together with Theorem~\ref{thm:approx}.\ref{i:continuity_lip} even yields the convergence rate $1/2$ for the solutions of the corresponding truncated solutions. Then for each stopping time $\tau$ such that
  for some $r\in (0,\infty)$
  \[\tau \leq \inf_{n\in\bar\N}\inf\{ t\geq 0\,\vert\,
    \norm{X_n(t)}{\H^2_D} > r\}, \]
  we get in Theorem~\ref{thm:approx_Stefan_see}.\ref{i:conv_see_global} that
  for all $q'\in (1,\infty)$
  \begin{equation*}
    \sup_{n\in\N} n^{q'/2} \E{\sup_{0\leq t\leq T}\norm{X_\infty(t\wedge \tau) -
        X_n(t\wedge \tau)}{\H^2_D}^{q'}}<\infty. 
  \end{equation*}
  However, since $B_n$, $n\in \bar\N$, are not globally bounded it is not
  immediate to see if this translates to the solutions of the original
  equations. 
\end{rmk}

\subsection{From fixed to moving boundary problem}
To translate the results on the stochastic evolution equations to the
moving boundary problems, we define in a first step the isometric isomorphism
\begin{equation*}
    \iota\colon L^2(\R_+) \oplus L^2(\R_+)\to L^2(\R),\qquad
    (u_1, u_2) \mapsto
    \begin{cases}
      u_1,& \text{on } (0,\infty),\\
      u_2,& \text{on } (-\infty,0).
    \end{cases}    
\end{equation*}
Then, the transformation will be performed by the mapping
\begin{equation}
  \label{eq:trafofct}
  F \colon L^2(\R_+)\oplus L^2(\R_+)\oplus \R \to L^2(\R),\quad
  (u_1,u_2, x)
  \mapsto (\iota(u_1,u_2))(\cdot - x).
\end{equation}
The mappings have the following properties:
\begin{lem}\label{lem:trafo}
  \begin{enumerate}[label=(\alph*)] 
  \item\label{i:trafo} The restriction of $\iota$ to $H^1_0(\R_+)\oplus H^1_0(\R_+)$
    defines an isometric isomorphism into
    \[ \{u\in L^2(\R) \,\vert\,
      u\vert_{\dot \R} \in H^1(\dot \R),\,
      u(0)=0\}\subset H^1(\R)\]
  \item\label{ii:trafo}
    \(\iota((H_D^2(\R_+))^{\times 2}) =  \Gamma(0) =
    H^2_D(\dot\R)\)
  \item\label{iv:trafo} $F\in C(\L^2; L^2(\R))$     
  \item\label{iii:trafo} $F\vert_{\H^1_D}$ defines an element of $C^1(\H^1_D; L^2(\R))$ and of $C(\H^1_D;H^1(\R))$.
  \end{enumerate}
\end{lem}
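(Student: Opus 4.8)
The plan is to treat the four items in order, each building on the previous. For \ref{i:trafo}, I would first record that $\iota$ is by construction an isometry from $L^2(\R_+)\oplus L^2(\R_+)$ onto $L^2(\R)$: after the reflection implicit in the gluing, the squared norm splits as $\norm{u_1}{L^2(\R_+)}^2+\norm{u_2}{L^2(\R_+)}^2$. Restricting to $H^1_0(\R_+)\oplus H^1_0(\R_+)$, the two one-sided traces at the interface both vanish, so the pieces match; a piecewise-$H^1$ function on $\dR$ with matching one-sided traces is globally $H^1$, whence $\iota(u_1,u_2)\in H^1(\R)$ with $\iota(u_1,u_2)(0)=0$. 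The chain rule for the reflection shows the weak derivative splits as well, so $\iota$ is in fact an $H^1$-isometry; surjectivity onto $\{u\in L^2(\R):u|_{\dR}\in H^1(\dR),\ u(0)=0\}$ follows by splitting any such $u$ into its two (reflected) one-sided restrictions. Item~\ref{ii:trafo} is then unwinding the definition of $\Gamma(0)$: if $u_1,u_2\in H^2_D(\R_+)$ the glued function is piecewise $H^2$ with vanishing one-sided traces, i.e. lies in $\Gamma(0)$, and conversely; the identity $\Gamma(0)=H^2_D(\dR)$ is the definition from Appendix~\ref{sec:notation}.

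For \ref{iv:trafo} I would write $F(u_1,u_2,x)=\theta_x\,\iota(u_1,u_2)$, where $\theta_x$ denotes translation by $x$ on $L^2(\R)$, and exploit that $\iota$ is a bounded linear isometry while $(\theta_x)_{x\in\R}$ is a strongly continuous isometric group. Continuity is checked sequentially (both spaces being metric): for $(u^n,x_n)\to(u,x)$ I split
\[
  \theta_{x_n}\iota(u^n)-\theta_x\iota(u)
  = \theta_{x_n}\bigl(\iota(u^n)-\iota(u)\bigr)
  + \bigl(\theta_{x_n}-\theta_x\bigr)\iota(u).
\]
The first summand has $L^2$-norm $\norm{\iota(u^n)-\iota(u)}{L^2(\R)}=\norm{u^n-u}{}\to0$ since $\theta_{x_n}$ is isometric, and the second tends to $0$ by strong continuity of the translation group applied to the fixed element $\iota(u)$.

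Part \ref{iii:trafo} is the substantive one. The continuity statement $F|_{\H^1_D}\in C(\H^1_D;H^1(\R))$ repeats the argument for \ref{iv:trafo} verbatim, now using \ref{i:trafo} to land in $H^1(\R)$ and the fact that $(\theta_x)$ is also strongly continuous on $H^1(\R)$. For $C^1(\H^1_D;L^2(\R))$ I would exhibit the candidate derivative
\[
  DF(u_1,u_2,x)[h_1,h_2,\xi]
  = \theta_x\iota(h_1,h_2) - \xi\,\theta_x\bigl(\iota(u_1,u_2)\bigr)',
\]
where $(\cdot)'$ is the global weak derivative of the $H^1$-function $\iota(u_1,u_2)$. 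The partial derivative in the function directions is immediate because $F$ is affine-linear there; the genuine content is the shift direction, where I would invoke the standard fact that $x\mapsto\theta_x g$ is $L^2$-differentiable with derivative $-\theta_x g'$ precisely when $g\in H^1(\R)$ (the translation group has generator $-\partial_x$ with domain $H^1(\R)$). Fréchet differentiability then follows by splitting the remainder into a shift-only term that is $o(|\xi|)$ and a cross term bounded by $|\xi|\,\norm{\iota(h)}{H^1(\R)}=|\xi|\,\norm{h}{}=o(\norm{(h,\xi)}{})$.

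The step I expect to be the main obstacle is the \emph{operator-norm} continuity of $x\mapsto DF(u,x)$ needed for the $C^1$ (rather than merely Gateaux) conclusion. The shift-derivative term is controlled by the continuity from \ref{iv:trafo} applied to $(\iota u)'\in L^2(\R)$. The delicate piece is the function-direction term: I must show $\sup_{\norm{h}{}\le1}\norm{\theta_x\iota(h)-\theta_{\tilde x}\iota(h)}{L^2(\R)}\to0$ as $\tilde x\to x$, i.e. strong continuity uniformly over the unit ball, which is \emph{not} automatic. Here the elementary bound $\norm{\theta_s g-g}{L^2(\R)}\le|s|\,\norm{g'}{L^2(\R)}$ rescues the argument: since $\iota$ is an $H^1$-isometry, $\norm{(\iota h)'}{L^2(\R)}\le\norm{h}{}\le1$, so the supremum is at most $|x-\tilde x|$. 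Finally I would remark that the asymmetry — differentiability into $L^2(\R)$ but only continuity into $H^1(\R)$ — is forced, since the shift-derivative $-\theta_x(\iota u)'$ lies in $H^1(\R)$ only when $\iota u\in H^2$, which fails on $\H^1_D$.
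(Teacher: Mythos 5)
Your proof is correct. For items (a), (b) and (c) it coincides with the paper's argument: the paper declares (a) and (b) immediate from the definitions of the direct sum and the Sobolev spaces, and for the continuity of $F$ on $\L^2$ it uses exactly your decomposition $\theta_{x_n}\iota(u^n)-\theta_x\iota(u)=\theta_{x_n}\bigl(\iota(u^n)-\iota(u)\bigr)+\bigl(\theta_{x_n}-\theta_x\bigr)\iota(u)$, handled by translation invariance of Lebesgue measure together with strong continuity of the translation group. The genuine difference is in item (d): the paper disposes of it by citing \cite[Lemma 5.3.(2)]{keller2016stefan}, whereas you supply a self-contained argument. That argument is sound: the candidate derivative $DF(u,x)[h,\xi]=\theta_x\iota(h)-\xi\,\theta_x(\iota u)'$ is the correct one (the translation group on $L^2(\R)$ has generator $-\partial_x$ with domain $H^1(\R)$, which is exactly where the restriction to $\H^1_D$ enters), and you correctly isolate the one delicate point, operator-norm continuity of $(u,x)\mapsto DF(u,x)$: strong continuity of $\theta_x$ is not uniform over $L^2$-bounded sets, but over the unit ball of the function component the estimate $\norm{\theta_s g-g}{L^2(\R)}\le \abs{s}\,\norm{g'}{L^2(\R)}$ gives the required uniformity precisely because $\iota$ is an $H^1$-isometry by part (a). Your closing observation that one can only expect differentiability into $L^2(\R)$ and mere continuity into $H^1(\R)$ matches the asymmetric formulation of the lemma. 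The trade-off between the two routes is length versus transparency: the citation is shorter, while your version makes visible the mechanism (and the role of the $H^1$ topology) that a reader consulting the reference would have to reconstruct anyway.
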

\begin{proof}
  The first two results are immediate from the definition of
  direct sums of Hilbert spaces and of the Sobolev
  spaces. Moreover, item~\ref{iii:trafo} follows from \cite[Lemma
  5.3.(2)]{keller2016stefan}. To see the last part let for $n\in\bar \N$ $u_n= (u_{n,1}, u_{n,2}, u_{n,3})\in \L^2$ be such that $\lim_{n\to\infty} u_n = u_\infty$. Then,
  \begin{multline*}
    \lim_{n\to\infty} \int_\R \abs{\iota(u_{n,1}, u_{n,2})(x-u_{n,3})- \iota(u_{1,\infty}, u_{2,\infty})(x-u_{\infty,3})}^2 \d x \\
    \leq  2\lim_{n\to\infty} \int_\R \abs{\iota(u_{\infty ,1}, u_{\infty ,2})(x-u_{n,3}) - \iota(u_{\infty,1}, u_{\infty,2})(x-u_{\infty,3})}^2 \d x \\   
   +2 \lim_{n\to\infty} \int_\R \abs{\iota(u_{n,1}, u_{n,2})(x-u_{n,3}) - \iota(u_{\infty,1}, u_{\infty,2})(x-u_{n,3})}^2 \d x.
  \end{multline*}
  Here, on the right hand side, the first limit equals $0$ due to strong continuity of the translation group on $L^2(\R)$, and the second limit vanishes by translation invariance of the Lebesgue measure and continuity of $\iota$. Hence, $F$ is continuous.
\end{proof}

In general, $F$ is not $C^2$. However, the transformation from
Section~\ref{ssec:proofmainres} can be made rigorous by using the
stochastic chain rule from~\cite[Section
5]{keller2016stefan}. The following result then eventually yields Theorem~\ref{thm:existence} and allows to derive Theorem~\ref{thm:conv_smbp} from Theorem~\ref{thm:approx_Stefan_see}. 
\begin{prop}\label{prop:trafo}
  Let Assumptions~\ref{ass:mu},~\ref{ass:sigma},~\ref{ass:rho}
  and~\ref{ass:zeta} hold true and for $n\in \bar \N$ let $\tau_n$,
  and $X_n = (X_{n;1}, X_{n;2}, X_{n;3})$ be given as in
  Proposition~\ref{prop:existence_loc}. On $\llbrak 0,\tau_n\llbrak$,
  define $v_n(t,\cdot):= F(X_n(t))$ and $p_n^*(t) :=
  (X_{n,3}(t))$. Then, in the sense of Definition~\ref{df:smbp}, for
  each $n\in\bar\N$, $(\tau_n, v_n, p_n^*)$ is the unique solution
  of~\eqref{eq:smbp} with interface
  condition~\eqref{eq:Stefan_cond_approx} for
  $n<\infty$, and with interface condition~\eqref{eq:Stefan_cond} for
  $n=\infty$, respectively.  
\end{prop}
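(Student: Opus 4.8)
The plan is to verify both claims of Definition~\ref{df:smbp} — that $(\tau_n, p_n^*, v_n)$ is a solution and that it is the unique one — by transporting the stochastic evolution equation~\eqref{eq:strong_see} for $X_n$ through the (non-smooth) change of variables $F$. The state-space bookkeeping is immediate from Lemma~\ref{lem:trafo}: since $X_n$ takes values in $\dom(A)=\H^2_D$, part~\ref{ii:trafo} shows that $v_n(t,\cdot)=F(X_n(t))$ lies in $\Gamma(p_n^*(t))$ and satisfies the Dirichlet condition~\eqref{eq:bc}, while parts~\ref{iv:trafo} and~\ref{iii:trafo} guarantee that $(p_n^*, v_n, \nabla v_n, \Delta v_n)$ is adapted and continuous on $\R\times L^2(\R)^{\times 3}$, exactly as required.

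The core is an It\^o/chain-rule computation for $t\mapsto F(X_n(t))$. Writing $\theta_a$ for translation by $a$, so that $F(u_1,u_2,p)=\theta_p\,\iota(u_1,u_2)$, I would exploit that $F$ is \emph{linear} in the first two coordinates and only $C^1$ in the shift variable, with $\partial_p F = -\nabla F$. Crucially, the third coordinate carries no martingale part: from~\eqref{eq:strong_see} one reads off $\d p_n^*(t)=\Psi_n(X_n(t))\,\d t$, the $-\Id_{\L^2}$ from $A$ and the $+\Id_{\L^2}$ from $B_n$ cancelling in the third component. Hence $p_n^*$ has finite variation with vanishing quadratic variation, so the only second-order terms a chain rule could produce, $\partial_{pp}F\,\d\langle p_n^*\rangle$ and the mixed $\partial_u\partial_p F\,\d\langle u,p_n^*\rangle$, both vanish. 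This is precisely why the failure of $F$ to be $C^2$ does no harm and why the first-order regularity $F|_{\H^1_D}\in C^1$ from Lemma~\ref{lem:trafo}.\ref{iii:trafo} suffices; making this rigorous is the role of the stochastic chain rule of~\cite[Section~5]{keller2016stefan}, which yields, in $L^2(\R)$,
\begin{equation*}
  \d v_n(t,\cdot) = \theta_{p_n^*(t)}\,\iota\big(\d X_{n,1}(t),\,\d X_{n,2}(t)\big) - \nabla v_n(t,\cdot)\,\d p_n^*(t).
\end{equation*}

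Next I would substitute the componentwise dynamics of~\eqref{eq:strong_see}. The terms $\eta_\pm\Delta_D X_{n,i}+\mu_\pm(\cdot,X_{n,i},\partial_x X_{n,i})$, once shifted by $p_n^*$, reproduce the bulk operators of~\eqref{eq:smbp}; the transport contribution $\Psi_n(X_n)\,\bnabla X_n$ in the drift of $B_n$ becomes $\Psi_n(X_n)\nabla v_n(t,\cdot)$ after the shift (the reflection sign in $\bnabla$ being exactly what is needed for the two sides of the interface to glue to $\nabla v_n$), and cancels precisely against $-\nabla v_n(t,\cdot)\,\d p_n^*=-\Psi_n(X_n)\nabla v_n(t,\cdot)\,\d t$. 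For the martingale part, applying $\theta_{p_n^*}$ to $(C(X_n)\,\d W)_{1/2}=\sigma_\pm(\cdot,X_{n,i})(T_\zeta\,\d W)(p_n^*\pm\cdot)$ returns $\sigma_\pm(\cdot-p_n^*, v_n)\,\d\xi(\cdot)$, matching the noise in~\eqref{eq:smbp}; here one must justify interchanging the time-dependent, adapted, finite-variation translation with the $L^2(\R)$-valued stochastic integral, which again is supplied by the cited chain rule. Finally the interface condition is read off $\d p_n^*=\Psi_n(X_n)\,\d t$: by definition of $\Psi_n$ this is~\eqref{eq:Stefan_cond_approx} for $n<\infty$, and, using that the traces $\partial_x X_{n,i}(0)$ are realised by $F|_{\H^1_D}$, it is the Stefan condition~\eqref{eq:Stefan_cond} for $n=\infty$.

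For uniqueness I would run the transformation backwards. Given any solution $(\tau, p^*, v)$ of~\eqref{eq:smbp} in the sense of Definition~\ref{df:smbp}, the inverse transformation — set $u(t,\cdot)=\theta_{-p^*(t)}v(t,\cdot)=v(t,\cdot+p^*(t))$ and split/reflect to obtain $\widetilde X=(u_1,u_2,p^*)$ with values in $\H^2_D$ — produces, by the analogous chain-rule computation (in which centering now \emph{introduces} the transport term $+\Psi_n\nabla u$, matching $\Psi_n\bnabla$ in $B_n$), a strong solution of~\eqref{eq:strong_see} on $\llbrak 0,\tau\llbrak$. The uniqueness statement of Proposition~\ref{prop:existence_loc} then forces $\widetilde X=X_n$ and $\tau=\tau_n$, whence $(v,p^*)=(v_n,p_n^*)$. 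I expect the genuine obstacle to lie not in the algebra of the cancelling transport terms but in legitimizing the chain rule itself: one must show that the finite-variation, time-dependent translation commutes with both the Bochner and the stochastic integrals in $L^2(\R)$, and that the a priori $\H^{2\alpha}$-valued mild solution is in fact $\H^2_D$-valued, so that $\Delta v_n$ exists as a genuine $L^2(\R)$ element — both addressed by the framework of~\cite[Section~5]{keller2016stefan}, whose hypotheses must be checked against the regularity provided by Lemmas~\ref{lem:std} and~\ref{lem:trafo}.
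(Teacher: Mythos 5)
Your proposal is correct and follows essentially the same route as the paper: apply the stochastic chain rule of \cite[Theorem 5.4]{keller2016stefan} to $F(X_n(\cdot))$ (the vanishing quadratic variation of the shift coordinate being exactly why $C^1$-regularity of $F$ suffices) to obtain the solution property, and apply the same chain rule to the reverse transformation with shift $-p_n^*$ to reduce uniqueness to that of the strong integral equation in Proposition~\ref{prop:existence_loc}. The paper's own proof is terser, deferring the cancellation of the transport terms and the interchange of translation with the integrals entirely to the cited chain rule, whereas you spell these out; the content is the same.
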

\begin{proof}
  By \ref{i:trafo} and \ref{ii:trafo} of Lemma~\ref{lem:trafo},
  $t\mapsto \iota((X_n(t))_1, (X_n(t))_2)$ is $H^1(\R)$-continuous and
  we can apply the stochastic chain rule~\cite[Theorem
  5.4]{keller2016stefan} to 
  \begin{equation}
    (v_t:= \iota(X_{n;1}(t), X_{n;2}(t)), x_t:= X_{n;3}(t), \tau := \tau_n)    
  \end{equation}
  respectively for each $n\in \bar \N$. In particular, since $X_n$
  takes values in $\dom(A)= \H^2_D$, Lemma~\ref{lem:trafo}.\ref{ii:trafo} and
  the definition of $F$ yield that $F(X_n(t))\in \Gamma(X_{n;3}(t))$ on $\llbrak 0,\tau_n\llbrak$. $(\tau_n,v_n,p_n^*)$ is indeed
  a solution in the sense of Definition~\ref{df:smbp}.
  
  The uniqueness claim follows by application of the stochastic chain rule~\cite[Theorem
  5.4]{keller2016stefan} with $(v_t:= \iota^{-1}(v_n(t,\cdot)), x_t :=
  - p_n^*(t), \tau := \tau_n)$, respectively for each $n\in \bar \N$,
  and the uniqueness for the strong integral equation in
  Proposition~\ref{prop:existence_loc}. 
\end{proof}


\begin{appendix}
  \section{Notation}
\label{sec:notation}

For any finite union of disjoint intervals $I\subset \R$ denote by
$H^k(I)$, $k\in \N_0$, the $k$-th order Sobolev space and let
$C_0^\infty(I)$ be the space of smooth functions with compact support in
$I$, $BUC^k$ is the space of bounded uniformly continuous functions
with $k$- bounded uniformly continuous derivatives and let $L^2(I) =
H^0(I)$ and $L^p(I)$, $p\in [1,\infty]$ be the Lebesgue space, and
$L^p_{loc}$ be the spaces of locally $p$-integrable functions. For all
such sets $I\subset \R$ and all $f\in H^2(I)$, we denote by $\nabla f$
and $\Delta f$ respectively the first and second (piecewise) weak derivative on $I$, that is if $I = \bigcup_{k=1}^n I_k$ for disjoint intervals $I_k$, $k=1,...,n$, then for all $\phi \in C^{\infty}_0(I)$,
\begin{align*}
  \int_I \nabla f(x)\phi(x) \d x &= \sum_{k=1}^n \int_{I_k} \nabla f(x) \phi(x) \d x = - \int_{I} \phi'(x) f(x) \d x\\
  \int_I \Delta f(x)\phi(x) \d x &= \sum_{k=1}^n \int_{I_k} \Delta f(x) \phi(x) \d x =  \int_{I} \phi''(x) f(x) \d x.
\end{align*}

Let $\R_+:= (0,\infty)$, $\dot \R := \R \setminus\{0\}$ and $\dot \R_x
:= \R\setminus\{x\}$, for $x\in \R$. 

For Banach spaces $E$, $F$ we say $E\hookrightarrow F$ if $E$ is
continuously embedded into $F$, and we denote by $E\oplus F$ the direct sum,
i.\,e. the Banach space $E\times F$ equipped with the norm 
\[ \norm{(e,f)}{E\oplus F} := \sqrt{\norm{e}{E}^2 +
    \norm{f}{F}^2},\qquad  e\in E, f\in F.\]
We denote by $B(E;F)$ the space of bounded functions mapping $E$ into $F$, equipped with norm
\begin{equation*}
  \norm{\Phi}{B(E;F)} := \sup_{x\in E} \norm{B(x)}{F},\qquad \Phi \in B(E;F),
\end{equation*}
and by $\Lip(E;F)$ the space of
Lipschitz continuous functions mapping $E$ into $F$, equipped with the norm
\begin{equation*}
  \norm{\Phi}{\Lip(E;F)} := \norm{\Phi(0)}{F} +
  \snorm{\Phi}{\Lip(E;F)},\qquad \Phi \in \Lip(E;F),
\end{equation*}
where the seminorm $\snorm{\cdot}{\Lip(E;F)}$ is defined as
\begin{equation*}
  \snorm{\Phi}{\Lip(E;F)} := \sup_{\substack{x,y\in E \\ x\neq y}}
  \frac{\norm{\Phi(x) - \Phi(y)}{F}}{\norm{x-y}{E}},\qquad \Phi \in\Lip(E;F),
\end{equation*}
and denote by $\LipBd(E;F)$ the space of bounded functions in
$\Lip(E;F)$, equipped with norm
\begin{equation*}
  \norm{\Phi}{\LipBd(E;F)}:= \sup_{x\in E} \norm{\Phi(x)}{E} +
  \snorm{\Phi}{\Lip(E;F)},\qquad \Phi \in \LipBd(E;F),
\end{equation*}
Moreover, let $\LipLoc(E;F)$ be the space of functions $\Phi \colon
E\to F$
such that for all $r\in \R_+$, $\Phi \vert_{B_E(r)}$ is Lipschitz
continuous, where $B_E(r):= \{x\in E\colon \norm{x}{E}\leq r\}$, and
we define
\[ \snorm{\Phi}{\Lip(E;F);r} :=\sup_{\substack{x,y\in B_{E}(r) \\ x\neq y}}
  \frac{\norm{\Phi(x) - \Phi(y)}{F}}{\norm{x-y}{E}},\qquad \Phi
  \in\Lip(E;F),\,r>0.\]
Given additionally a filtered probability space $(\Omega,\F, \FF,\PP)$,
let $\sL^0_{\FF}(E)$ be the set of equivalence classes of
$\FF$-adapted and $E$-continuous stochastic processes,
for $q\in [2,\infty)$ and $X= (X_t)_{t\in [0,T]}\in \sL^0_\FF(E)$ define
\begin{equation*}
  \norm{X}{\sL^q_{\FF}(E)} := \left(\E{ \sup_{0\leq t\leq T}\norm{X_t}{E}^q}\right)^{1/q} \in [0,\infty],
\end{equation*}
and set $\sL^q_{\FF}(E):= \{X\in \sL^0_{\FF}(E) \colon 
\norm{X}{\sL^q_{\FF}(E)} <\infty\}$.

For $\R$-valued random variables $\varsigma$ and $\tau$ define the closed stochastic interval
\[ \llbrak \varsigma, \tau \rrbrak := \{(t,\omega) \colon
  \varsigma(\omega)\leq t \leq \tau(\omega)\}.\]
We say that for stochastic processes $X$ and
$Y$ that $X=Y$ on $\llbrak \varsigma, \tau\rrbrak$, if
\[X(t;\omega) = Y(t;\omega)\] for all $t$ and almost all $\omega$ such
that $(t,\omega)\in \llbrak \varsigma,\tau \rrbrak$. In the same way the half opened and open intervals $\llbrak\varsigma,\tau\llbrak$, $\rrbrak \varsigma, \tau \rrbrak$ and $\rrbrak
\varsigma, \tau \llbrak$ are defined.


\end{appendix}
\bibliographystyle{plain}
\bibliography{literature}
\end{document}